\theoremstyle{plain}
\numberwithin{equation}{section}
\newtheorem{theorem}{Theorem}[section]
\newtheorem{proposition}[theorem]{Proposition}
\newtheorem{lemma}[theorem]{Lemma}
\newtheorem{corollary}[theorem]{Corollary}
\theoremstyle{definition}
\newtheorem{remark}[theorem]{Remark}
\newtheorem{example}[theorem]{Example}
\newtheorem{definition}[theorem]{Definition}
\newcommand{\field}[1]{\mathbb{#1}}
\newcommand{\RR}{\mathbb{R}}
\newcommand{\NN}{\field{N}}
\newcommand{\EE}{\mathbb{E}}
\newcommand{\MM}{\mathbb{M}}
\begin{document}

\title[Metric mean dimension and mean Hausdorff  dimension varying the metric]{Metric mean dimension and mean Hausdorff  dimension varying the metric}
\author{J. Muentes, A.J. Becker,  A.T. Baraviera, \'E. Scopel}
\date{\today}

\address{Jeovanny de Jesus Muentes Acevedo, Facultad de Ciencias B\'asicas,  Universidad Tecnol\'ogica de  Bol\'ivar, Cartagena de Indias - Colombia}
\email{jmuentes@utb.edu.co}

\address{Alex Jenaro Becker, Universidade Federal de Santa Maria, Santa Maria, RS - Brasil}
\email{alex.becker@ufsm.br}

\address{Alexandre Tavares Baraviera, Universidade Federal do
Rio Grande do Sul, Porto Alegre, RS - Brasil}
\email{baravi@mat.ufrgs.br}

\address{Érick Scopel, Instituto Federal de Educação, Ciência e Tecnologia do Rio Grande do Sul, Caxias do Sul, RS - Brasil}
\email{erick.scopel@caxias.ifrs.edu.br}

\begin{abstract} Let $f:\MM\rightarrow \MM$ be a continuous map on a compact metric space $\MM$ equipped with a fixed metric $d$,  and let  $\tau$ be the topology on $\MM$ induced by $d$.  
First, we will establish some fundamental properties of the mean Hausdorff dimension. Furthermore, it is important to note that the metric mean dimension and mean Hausdorff dimension depend on the metric chosen for  $\MM$.  In this work, we will prove that, for a fixed dynamical system $f:\MM\rightarrow \MM$, the functions $\text{mdim}_{\text{M}}(\MM, f):\MM(\tau)\rightarrow \mathbb{R}\cup \{\infty\}$ and $\text{mdim}_{\text{H}}(\MM, f):\MM(\tau)\rightarrow \mathbb{R}\cup \{\infty\}$ are not   continuous. Here, $ \text{mdim}_{\text{M}}(\MM, f)(\rho)= \text{mdim}_{\text{M}}(\MM,\rho, f)$ and $ \text{mdim}_{\text{H}}(\MM, f)(\rho)= \text{mdim}_{\text{H}}(\MM,\rho, f)$ represent, respectively,  the metric mean dimension and the mean Hausdorff dimension  of $f$ with respect to $\rho\in \MM(\tau)$ and   $\MM(\tau)$ is the set consisting of all   equivalent metrics to $d$  on $\MM$. Furthermore, we will present examples of  certain classes of metrics for which  the metric mean dimension   is a  continuous function.  
\end{abstract}

\keywords{mean topological dimension, metric mean dimension, mean Hausdorff dimension, topological entropy, box dimension, Hausdorff dimension}

\subjclass[2020]{37B40, 	 	37B02}

\date{\today}
\maketitle

\section{Introduction}

The mean topological dimension of a dynamical system $(\MM, f)$, denoted by $\text{mdim}(\MM,f)$,   where $\MM$ is a  compact topological space and $f$ is a continuous map, is an invariant under topological conjugacy.   This concept was introduced by Gromov in 1999 (\cite{Gromov}). It serves as an essential tool for understanding systems with infinite topological entropy.  In 2000, Lindenstrauss and Weiss (\cite{lind}) demonstrated that the left-shift map defined on $([0,1]^{n})^\mathbb{Z}$ has a mean topological dimension equal to $n$, where $n$ is a positive integer.  We define the mean topological dimension in Section \ref{section2}.

\medskip

The concept of mean topological dimension is closely related to problems involving the embedding of minimal dynamical systems. The works \cite{lind}, \cite{elon}, \cite{gutman}, and \cite{GTM} demonstrate that any minimal system with a mean topological dimension less than $\frac{n}{2}$ can be embedded into the shift map on $([0,1]^n)^\mathbb{Z}$.   It is worth noting that the value $\frac{n}{2}$ is optimal in this context. In \cite{Dou}, the author constructed minimal subshifts on a countable infinite amenable group with arbitrarily mean topological dimension.     It is also worth mentioning that calculating the mean topological dimension is a challenging task. Consequently, it becomes crucial to obtain upper bounds for the mean topological dimension of a dynamical system. 

\medskip

The metric mean dimension for dynamical systems defined on  compact metric spaces, introduced by Lindenstrauss and Weiss in 2000 (\cite{lind}), offers upper bounds for the mean topological dimension. Since its introduction, the notion of metric mean dimension has been extensively studied, as we can see in the works  \cite{Backes}, \cite{Carvalho2}, \cite{Cheng}, \cite{{Lacerda}}, \cite{Ma}, \cite{Yang}, among other works. 

\medskip 

In 2019, Lindenstrauss and Tsukamoto (\cite{LT2019}) introduced a new tool that provides a better upper bound for the mean topological dimension: the mean Hausdorff dimension. However, it is important to note that both the metric mean dimension and mean Hausdorff dimension are not invariant under topological conjugacy; they depend on the chosen metric for the  space.

\medskip 
In summary, the metric mean dimension and mean Hausdorff dimension  depend on three variables: the dynamics represented by $f$, the space denoted as $\MM$, and the metric $d$ employed on $\MM$.  We denote by $\text{mdim}_{\text{M}}(\MM, d, f)$ and $\text{mdim}_{\text{H}}(\MM, d, f)$ the metric mean dimension and the mean Hausdorff dimension of $f$, respectively.

\medskip

Several works explore the metric mean dimension concerning the dynamics and the invariant space in which these dynamics operate. For instance, in \cite{Carvalho}, the authors establish that, for $C^0$-generic homeomorphisms acting on a compact, smooth, boundaryless manifold $\MM$ with  dimension greater than one, the upper metric mean dimension concerning the smooth metric coincides with the dimension of the manifold.   Furthermore, in \cite{Muentes2} it is proved the  set of all homeomorphisms on $\MM$ with metric mean dimension equal to a fixed $\alpha \in [0,\text{dim}(\MM)]$ is dense in $\text{Hom}(\MM)$, where $\text{dim}(\MM)$ is the topological dimension of $\MM$. These results are similarly demonstrated in \cite{Muentes3} for the case of the mean Hausdorff dimension. 
Moreover, in \cite{Muentes} it is proved that  if $\text{dim}(\MM)\geq 2$, the mapping
\begin{equation*}
\begin{aligned}
\text{mdim}_{\text{M}}(\MM,d,\cdot ) \colon \text{Hom}(\MM) &\to \mathbb{R}\\
f &\mapsto \text{mdim}_{\text{M}}(\MM,d,f)
\end{aligned}
\end{equation*}
is not continuous anywhere.  

\medskip

   The dependence of the metric mean dimension on the metric has been explored in various works. For instance,   in \cite{lind}  it is proven that for any metric $d$ on $\MM$, we have $$ 
\text{mdim}(\MM,f)\leq  \overline{\text{mdim}}_{\text{M}}(\MM,d,f) .$$
Furthermore, it is conjectured that for any dynamical system $(\MM, f)$, there exists a metric $d$ on $\MM$ such that $$ 
\text{mdim}(\MM,f)=  \overline{\text{mdim}}_{\text{M}}(\MM,d,f) .$$ 
This conjecture has been verified for specific cases of dynamical systems (see \cite{LT2019}, Theorem 3.12). 
In \cite{LT2019}, the authors  present an example of a left shift $(A^{\mathbb{Z}},\sigma)$ and two metrics $d$ and $d^{\prime}$ on $A^{\mathbb{Z}}$ such that  $${\text{mdim}}_{\text{M}}(A^{\mathbb{Z}},d,\sigma)= \frac{1}{2}=\text{dim}_{\text{B}}(A)\quad\text{and}\quad {\text{mdim}}_{\text{M}}(A^{\mathbb{Z}},d^{\prime},\sigma)=0, $$ where $\text{dim}_{\text{B}}(A)$ denotes the box dimension of A (for the definition of box dimension, see \cite{Falconer}, Section 3.1).  In Example \ref{example11}, we will provide an example of a fixed dynamical system $f:[0,1]\rightarrow [0,1]$ such that for any fixed $a \in [0,1]$  there exists an explicit  metric   $d_{a}$ on $[0,1]$ such that $\text{mdim}_{\text{M}}([0,1], d_{a}, f) =\text{mdim}_{\text{H}}([0,1], d_{a}, f)=a$  (see Remark \ref{wfwfaffa}).

\medskip 

In \cite{Carvalho}, Corollary D states that there exist a dense subset of metrics $\mathcal{D}$ on $[0,1]$ and a   generic subset $\mathcal{G}$ of
$C^{0}([0, 1])$  such that
$$\text{mdim}_{\text{M}} ([0, 1] , \rho, f) = 1 \text{ for all }f \in \mathcal{G}, \text{ for all }\rho \in \mathcal{D}.$$

Next, in \cite{ShinodaM}, Theorem 1.1   states that if $A$ is a finite set, then
$$\text{mdim}_{\text{M}}(\mathcal{X},d_{\alpha}, \sigma_{1})=\frac{2h_{\text{top}}(\mathcal{X}, \sigma_{1},\sigma_{2})}{\log \alpha}  ,$$  where $\sigma_{1}((x_{m,n})_{m,n\in\mathbb{Z}})=((x_{m+1,n})_{m,n\in\mathbb{Z}})$ and $\sigma_{2}((x_{m,n})_{m,n\in\mathbb{Z}})=((x_{m,n+1})_{m,n\in\mathbb{Z}})$ are defined in $A^{\mathbb{Z}^{2}}$, $  \mathcal{X} $ is a closed subset of $A^{\mathbb{Z}^{2}}$ invariant under both $\sigma_{1}$ and $\sigma_{2}$ and $$ d_{\alpha}(x,y)=\alpha^{-\min\{| u|_{\infty}:x_{u}\neq y_{u}\}},$$
where $| u|_{\infty}= \max(|m|,|n|)$ for $u=(m,n)\in\mathbb{Z}^{2}$ and $\alpha>1$.  In Examples    \ref{gshfkfwee} and  \ref{gshfkf}, we will consider a similar metric $\textbf{d}_{\alpha}$ on the Cantor set $\textit{\textbf{C}}$ and  calculate the metric mean dimension of some particular maps on $(\textit{\textbf{C}}, \textbf{d}_{\alpha})$. 

\medskip 

From Examples \ref{metrica} and \ref{efjfk}, we can conclude that, for any $b\in [n,\infty)$, there exists a metric $d_{b}$ on $([0,1]^{n})^{\mathbb{Z}}$ such that $$\text{mdim}_{\text{M}}(([0,1]^{n})^{\mathbb{Z}},d_{b},\sigma)=\text{mdim}_{\text{H}}(([0,1]^{n})^{\mathbb{Z}},d_{b},\sigma)=b$$ (see \eqref{fwfwfswfw} and \eqref{gegwvxfmhm}).  

\medskip

The purpose of this work is to explore the continuity of the metric mean dimension on the metric $d$ on $\MM$. We will prove that, in general, the functions  $d\mapsto \text{mdim}_{\text{M}}(\MM, d,f) $ and $d\mapsto \text{mdim}_{\text{H}}(\MM, d,f) $ are not continuous anywhere. On the other hand, we will present examples of certain classes of metrics for which $d\mapsto \text{mdim}_{\text{M}}(\MM, d,f) $ and $d\mapsto \text{mdim}_{\text{H}}(\MM, d,f) $ are continuous functions. 
 
  \medskip

 The paper is organized as follows: in the next section, we will introduce the concepts of mean topological dimension, metric mean dimension and mean Hausdorff dimension. Furthermore, we will present some alternative formulas to calculate  the  Hausdorff dimension of any   compact metric  space, which are more aligned with the definition of mean Hausdorff dimension for dynamical systems (see Lemmas \ref{mg4g4l} and \ref{fege6}).  
 
In Section \ref{section4}, we will establish several properties of the mean Hausdorff dimension, inspired by properties already known for the metric mean dimension and based on the foundational concepts of the Hausdorff dimension.  For instance, it is well known that, given two metric spaces $(\MM,d)$ and $(\EE,d^{\prime})$, we have that
$$\text{dim}_{\text{H}}(\MM\times\EE )\geq \text{dim}_{\text{H}}(\MM)+\text{dim}_{\text{H}}(\EE)  $$ (see \cite{Falconer}, Chapter 7).  
In Proposition \ref{prdoctv}, we show that $$\underline{\text{mdim}}_{\text{H}}(\MM \times \EE,  d \times d^{\prime}, f \times g) \geq \underline{\text{mdim}}_{\text{H}}(\MM, d, f) + \underline{\text{mdim}}_{\text{H}}(\EE, d' , g),$$ for any two maps $f:(\MM,d)\rightarrow (\MM,d)$ and $g:(\EE,d^{\prime})\rightarrow (\EE,d^{\prime})$. Furthermore, in Theorem \ref{theo1}, we prove that, for $\mathbb{K}=\mathbb{Z} $ or $\mathbb{N}$,   
$$ \text{dim}_{\text{H}}(\MM,d)\leq \underline{\text{mdim}}_{\text{H}}(\MM^{\mathbb{K}} ,\text{\textbf{d}},\sigma)  ,$$
where $\sigma:\MM^{\mathbb{K}} \rightarrow \MM^{\mathbb{K}}  $ is the left shift map and $\textbf{d}$ is  a  specific metric on $\MM^{\mathbb{K}}$ obtained from the metric $d$  on $\MM$ (see \eqref{metrionmk}). In order to obtain this result, we use  Lemma \ref{czzzvrv}, in which  we present an alternative formula to calculate $ {\text{mdim}}_{\text{H}}(\MM^{\mathbb{K}} ,\text{\textbf{d}},\sigma)  .$

In Section \ref{someexamples}, we will calculate the metric mean dimension of several continuous maps $f:\MM\rightarrow \MM$ changing the metric on  $\MM$, when $\MM$ is the interval $[0,1]$ or the Cantor set.

In Section \ref{section5}, we will prove that both  the metric mean dimension and the mean Hausdorff dimension are not continuous with respect the metric. 

 In Section \ref{section7}, we will consider certain classes of metrics and explore how the metric mean dimension behaves when these metrics vary within these classes. More specifically, we will generate metrics using composition of subadditive continuous maps with a fixed metric on $\MM$.

We conclude this work by presenting some illustrative examples in Section \ref{sectionfinal}.

\section{Mean dimension, metric mean dimension and mean Hausdorff dimension}\label{section2}

Throughout this work, we will fix a metrizable compact space   $\MM$ and we will fix a metric $d$ on $\MM$, compatible with the topology on $\MM$. In this section we will present the notions of mean topological dimension, metric mean dimension and   mean Hausdorff dimension, introduced in \cite{lind} and \cite{LT2019}, respectively. 

\medskip
We briefly present the definition of mean topological dimension.   Let $\alpha=\{A_i\}_i$ be an open cover of $\MM$ and define $\text{ord}(\alpha)=\displaystyle\sup_{x\in X}\displaystyle\sum_{A_i\in\alpha} 1_{A_i}(x) -1$. A {\it refinement} of $\alpha$ is an open cover $\beta=\{B_j\}_j$ such that for any $B_j\in \beta$, there exists $A_i\in \alpha$, such that $B_j\subset A_i$. When $\beta$ is a refinement of $\alpha$, we write $\beta \succ \alpha$. Set $D(\alpha)=\displaystyle\min_{\beta\succ\alpha} \text{ord}(\beta)$, where $\alpha$ runs over all finite open covers of $\MM$  refining $\alpha$. The {\it topological dimension} of $\MM$ is $$\dim(\MM)=\sup \{D(\alpha):\alpha\text{ is a cover of }\MM\}.$$ 

Consider any continuous function $f:\MM\to \MM$, the {\it mean topological dimension} is defined as follow
$$\text{mdim} (\MM,f)=\displaystyle\sup_\alpha\lim_{n\to \infty}\frac{D(\alpha \vee f^{-1}(\alpha)\vee \cdots \vee f^{-n+1}(\alpha))}{n},$$ where $\alpha $   runs over all finite open covers of $\MM$.
The sequence $\alpha \vee f^{-1}(\alpha)\vee \cdots \vee f^{-n+1}(\alpha)$ is subadditive for $n\geq 1$, and the above limit exists.

\medskip

For a continuous map $f:\MM \to \MM$ and any non-negative integer
$n$, set
$$
d_n(x,y)=\max \left\{d(x,y),d(f(x),f(y)),\dots,d(f^{n-1}(x),f^{n-1}(y))\right\}\quad\text{for any }x,y\in\MM.
$$   
We say that $A\subset \MM$ is {an} $(n,f,\varepsilon)$-\textit{separated subset}
if $d_n(x,y)>\varepsilon$, for any two  distinct points  $x,y\in A$. We denote by $\text{sep}(n,f,\varepsilon)$ the maximal cardinality of any $(n,f,\varepsilon)$-separated
subset of $\MM$.   We say that $E\subset \MM$ is {an} $(n,f,\varepsilon)$-\textit{spanning} set for $\MM$ if
for any $x\in \MM$ there exists $y\in E$ such  that $d_n(x,y)<\varepsilon$. Let $\text{span}(n,f,\varepsilon)$ be the minimum cardinality
of any $(n,f,\varepsilon)$-{spanning} subset of $\MM$.   Given an open cover $\alpha$ of $\MM$, we say that $\alpha$ is {an}
$(n, f,\varepsilon)$-\textit{cover} of $\MM$ if the $d_n$-diameter of any element of $\alpha$ is less than
 $\varepsilon$. Let  $\text{cov}(n, f,\varepsilon)$ be the minimum number of elements in any $(n,f,\varepsilon)$-cover of $\MM$.    Set \begin{itemize} \item $\text{sep}(f,\varepsilon)=\underset{n\to\infty}\limsup \frac{1}{n}\log \text{sep}(n,f,\varepsilon)$;
\item $\text{span}(f,\varepsilon)=\underset{n\to\infty}\limsup \frac{1}{n}\log \text{span}(n,f,\varepsilon)$;
\item  $\text{cov}(f,\varepsilon)=\underset{n\to\infty}\limsup \frac{1}{n}\log \text{cov}(n,f,\varepsilon)$.\end{itemize}

 \begin{definition}
  We define the \emph{lower  metric mean dimension} of $(\MM,d,f)$  and the \emph{upper metric mean dimension} of $(\MM,d,f)$ by
  \begin{equation*}\label{metric-mean}
 \underline{\text{mdim}}_{\text{M}}(\MM,d,f)=\liminf_{\varepsilon\to0} \frac{\text{sep}(f,\varepsilon)}{|\log \varepsilon|}=\liminf_{\varepsilon\to0} \frac{\text{span}(f,\varepsilon)}{|\log \varepsilon|}=\liminf_{\varepsilon\to0} \frac{\text{cov}(f,\varepsilon)}{|\log \varepsilon|}\end{equation*}
and \begin{equation*}
\overline{\text{mdim}}_{\text{M}}(\MM,d,f)=\limsup_{\varepsilon\to0} \frac{\text{sep}(f,\varepsilon)}{|\log \varepsilon|}=\limsup_{\varepsilon\to0} \frac{\text{span}(f,\varepsilon)}{|\log \varepsilon|}=\limsup_{\varepsilon\to0} \frac{\text{cov}(f,\varepsilon)}{|\log \varepsilon|},
\end{equation*}
respectively (see \cite{lind}, Section 4). \end{definition}

Now, we present the definition of the Hausdorff dimension given in   \cite{LT2019}:
for $s\geq 0$ and $\varepsilon >0$, set
\begin{equation}\label{vareps}\text{H}_{\varepsilon}^{s} (\MM,d)=\inf\left\{ \Sigma_{n=1}^{\infty}(\text{diam} E_{n})^{s}: \MM=\cup_{n=1}^{\infty} E_{n} \text{ with } \text{diam} E_{n}<\varepsilon\text{ for all }n\geq 1\right\}. \end{equation} By  convention we consider $0^{0}=1$ and $\text{diam}(\emptyset)^{s}=0$. Let $\Theta >0$. Take $$ \text{dim}_{\text{H}}(\MM,d,\varepsilon,\Theta)=\sup\{s \geq 0:  \text{H}_{\varepsilon}^{s} (\MM,d) \geq \Theta\}.  $$
The \textit{Hausdorff dimension} of $(\MM,d)$, presented in \cite{LT2019},
is given by   $$ \text{dim}_{\text{H}}(\MM,d):= \lim_{\varepsilon\rightarrow 0}\text{dim}_{\text{H}}(\MM,d,\varepsilon,1).  $$
By simplicity in the notation, if $\Theta = 1$, we will set $$ \text{dim}_{\text{H}}(\MM,d,\varepsilon ):= \text{dim}_{\text{H}}(\MM,d,\varepsilon,1).$$
 
The usual definition of the Hausdorff dimension in the literature it is as follows:  let
$$\text{H}^s(\MM,d)=\displaystyle\lim_{\varepsilon\to 0}\text{H}^s_\varepsilon(\MM,d).$$ The \textit{Hausdorff dimension} of  $(\MM,d)$, denoted by $\dim_{\text{H}}^\ast(\MM,d)$, is given by
$$\dim_{\text{H}}^\ast(\MM,d)=\sup\{s\geq 0: \text{H}^s(\MM,d)>0\}=\sup\{s\geq 0: \text{H}^s(\MM,d)=\infty\}.$$

\begin{lemma}\label{mg4g4l} Fix any $\Theta >0$. 
We have that $\dim_{\emph{H}}(\MM,d)=\dim_{\emph{H}}^\ast(\MM,d)$ and furthermore $$ \dim_{\emph{H}}^{\Theta}(\MM,d):=\lim_{\varepsilon\rightarrow 0}\emph{dim}_{\emph{H}}(\MM,d,\varepsilon,\Theta)=\dim_{\emph{H}}(\MM,d).$$\end{lemma}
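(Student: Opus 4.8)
The plan is to prove this in two stages. First, one shows the classical identity $\dim_{\mathrm{H}}(\MM,d) = \dim_{\mathrm{H}}^{\ast}(\MM,d)$, where $\dim_{\mathrm{H}}$ is the $\Theta=1$ version in the LT2019 formulation and $\dim_{\mathrm{H}}^{\ast}$ is the usual one defined via $\mathrm{H}^s(\MM,d) = \lim_{\varepsilon\to 0}\mathrm{H}^s_\varepsilon(\MM,d)$. Then, second, one shows that replacing the threshold $1$ by an arbitrary $\Theta > 0$ does not change the limiting value, i.e. $\dim_{\mathrm{H}}^{\Theta}(\MM,d) = \dim_{\mathrm{H}}(\MM,d)$. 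In fact the cleanest route is to prove the second statement first in the form $\dim_{\mathrm{H}}^{\Theta}(\MM,d) = \dim_{\mathrm{H}}^{\ast}(\MM,d)$ for every $\Theta > 0$, since $\Theta = 1$ is then just a special case and the first assertion falls out for free.

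\medskip

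First I would record the monotonicity facts that drive everything. For fixed $\varepsilon$, the function $s \mapsto \mathrm{H}^s_\varepsilon(\MM,d)$ is non-increasing (since $\operatorname{diam} E_n < \varepsilon < 1$ eventually, raising the exponent shrinks each term — one should note that if $\operatorname{diam}\MM \geq 1$ a preliminary rescaling of the metric, or restricting attention to small $\varepsilon$, is harmless because only the $\varepsilon \to 0$ limit matters), and as $\varepsilon$ decreases $\mathrm{H}^s_\varepsilon(\MM,d)$ increases to $\mathrm{H}^s(\MM,d) \in [0,\infty]$. The standard critical-exponent dichotomy gives: if $\mathrm{H}^s(\MM,d) < \infty$ then $\mathrm{H}^t(\MM,d) = 0$ for all $t > s$; if $\mathrm{H}^s(\MM,d) > 0$ then $\mathrm{H}^t(\MM,d) = \infty$ for all $t < s$. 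Consequently there is a single number $s_0 = \dim_{\mathrm{H}}^{\ast}(\MM,d)$ with $\mathrm{H}^s(\MM,d) = \infty$ for $s < s_0$ and $\mathrm{H}^s(\MM,d) = 0$ for $s > s_0$. I would also observe $\mathrm{H}^s_\varepsilon(\MM,d) \le \mathrm{H}^s(\MM,d)$ for every $\varepsilon$.

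\medskip

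The core of the argument is then to chase the quantifiers. Fix $\Theta > 0$. For the inequality $\dim_{\mathrm{H}}^{\Theta}(\MM,d) \le \dim_{\mathrm{H}}^{\ast}(\MM,d)$: take any $s > s_0$, so $\mathrm{H}^s(\MM,d) = 0$; then for small enough $\varepsilon$ we have $\mathrm{H}^s_\varepsilon(\MM,d) \le \mathrm{H}^s(\MM,d) = 0 < \Theta$, hence $\dim_{\mathrm{H}}(\MM,d,\varepsilon,\Theta) \le s$ for those $\varepsilon$, so $\limsup_{\varepsilon\to 0}\dim_{\mathrm{H}}(\MM,d,\varepsilon,\Theta) \le s$; letting $s \downarrow s_0$ gives the bound. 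For the reverse inequality $\dim_{\mathrm{H}}^{\Theta}(\MM,d) \ge \dim_{\mathrm{H}}^{\ast}(\MM,d)$: take any $s < s_0$, so $\mathrm{H}^s(\MM,d) = \infty$; since $\mathrm{H}^s_\varepsilon(\MM,d) \uparrow \infty$ as $\varepsilon \to 0$, for every small $\varepsilon$ we eventually have $\mathrm{H}^s_\varepsilon(\MM,d) \ge \Theta$, hence $\dim_{\mathrm{H}}(\MM,d,\varepsilon,\Theta) \ge s$, and so $\liminf_{\varepsilon\to 0}\dim_{\mathrm{H}}(\MM,d,\varepsilon,\Theta) \ge s$; letting $s \uparrow s_0$ finishes. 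The squeeze shows the $\varepsilon\to 0$ limit exists and equals $s_0$ for every $\Theta>0$; taking $\Theta=1$ then yields $\dim_{\mathrm{H}}(\MM,d) = \dim_{\mathrm{H}}^{\ast}(\MM,d)$, and the general $\Theta$ case yields $\dim_{\mathrm{H}}^{\Theta}(\MM,d) = \dim_{\mathrm{H}}(\MM,d)$.

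\medskip

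The main obstacle is not any single deep step but rather keeping the three nested limiting operations (the sup over $s$ inside $\dim_{\mathrm{H}}(\MM,d,\varepsilon,\Theta)$, the $\varepsilon \to 0$ limit, and the critical-exponent behaviour of $\mathrm{H}^s$) properly coordinated, together with the mild technical annoyance that the definition of $\dim_{\mathrm{H}}(\MM,d,\varepsilon,\Theta)$ uses $\ge \Theta$ rather than $> \Theta$, so one must be slightly careful at the boundary value $s = s_0$ itself — but since the final answer is extracted as a limit in $\varepsilon$ and then a limit in $s$, the boundary case never actually needs to be decided. A secondary point worth stating explicitly is the convention $0^0 = 1$ and the behaviour of $\mathrm{H}^s_\varepsilon$ at $s = 0$ (it equals $1$ as long as $\MM \ne \emptyset$, since a single covering set suffices), which guarantees $\dim_{\mathrm{H}}(\MM,d,\varepsilon,\Theta)$ is well defined and $\ge 0$ whenever $\Theta \le 1$; for $\Theta > 1$ one instead notes that finitely many small sets of total $s$-cost exceeding $\Theta$ exist for small $s$, using compactness of $\MM$.
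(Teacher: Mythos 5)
Your argument is correct and is essentially the paper's own proof: both directions rest on the same two observations, namely that $\mathrm{H}^s_\varepsilon(\MM,d)\le \mathrm{H}^s(\MM,d)$ forces $\dim_{\mathrm{H}}(\MM,d,\varepsilon,\Theta)\le \dim_{\mathrm{H}}^{\ast}(\MM,d)$, while $\mathrm{H}^s(\MM,d)=\infty$ for $s$ below the critical exponent forces $\mathrm{H}^s_\varepsilon(\MM,d)\ge\Theta$ for small $\varepsilon$ and hence the reverse bound. Your reorganization (proving $\dim_{\mathrm{H}}^{\Theta}=\dim_{\mathrm{H}}^{\ast}$ for all $\Theta$ at once and specializing to $\Theta=1$, plus the explicit remarks on the critical-exponent dichotomy and the $0^0=1$ convention) is only a cosmetic difference from the paper's presentation.
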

\begin{proof} First, notice that if   $\varepsilon >0$  in \eqref{vareps} decreases, the class of permissible  covers of $\MM$, with diameter less than $\varepsilon$, decreases. Therefore, for any $s\geq 0$, $\text{H}^s_\varepsilon(\MM,d)$ increases as $\varepsilon $ decreases. Hence, $$\text{H}^s_\varepsilon(\MM,d)\leq \text{H}^s(\MM,d)\quad \text{ for any }\varepsilon>0.$$ Thus, if $s\geq 0$ is such that $\text{H}^s_\varepsilon(\MM,d)\geq \Theta$, we have that $\text{H}^s(\MM,d)>0$. Consequently,
\begin{eqnarray*}
\dim_{\text{H}}(\MM,d,\varepsilon,\Theta)=\sup\{s\geq 0: \text{H}^s_\varepsilon(\MM,d)\geq \Theta\}&\leq &\sup\{s\geq 0: \text{H}^s(\MM,d)>0\}\\
&=&\dim_{\text{H}}^\ast(\MM,d).
\end{eqnarray*}
Taking the limit as $\varepsilon\to 0$, we obtain that \begin{equation}\label{fefezzzx}\dim_{\text{H}}^{\Theta}(\MM,d)\leq \dim_{\text{H}}^\ast(\MM,d).\end{equation}

Next, notice that, if $\dim_{\text{H}}^\ast(\MM,d)=0$, then $\dim_{\text{H}}^{\Theta}(\MM,d)=0$.  Suppose that $\dim_{\text{H}}^\ast(\MM,d)>0$. From the definition, for each $\delta>0$  there exists $s_\delta> 0$ such that
$$\dim_{\text{H}}^\ast(\MM,d)-\delta <s_\delta\leq \dim_{\text{H}}^\ast(\MM,d) \quad \text{and} \quad \text{H}^{s_\delta}(\MM,d)=\infty.$$
Thus, there exists $\varepsilon_0$ such that $\text{H}_\varepsilon^{s_\delta}(\MM,d)\geq \Theta$, for every $0<\varepsilon<\varepsilon_0$. Hence,
$$\dim_{\text{H}}(\MM,d,\varepsilon,\Theta)\geq s_\delta > \dim_{\text{H}}^\ast(\MM,d)-\delta.$$
Taking the limits as $\varepsilon\to 0$ and $\delta\to 0$, we conclude that \begin{equation}\label{fefezzczzx}\dim_{\text{H}}^{\Theta}(\MM,d)\geq \dim_{\text{H}}^\ast(\MM,d).\end{equation} From   \eqref{fefezzzx} and \eqref{fefezzczzx} we have that $\dim_{\text{H}}^{\Theta}(\MM,d)$ is independent of $\Theta>0$ and furthermore $$ \dim_{\text{H}}(\MM,d)= \dim_{\text{H}}^{\Theta}(\MM,d)=\dim_{\text{H}}^{\ast}(\MM,d),$$ as we want to prove.
\end{proof}

 \begin{lemma}\label{fege6} Suppose that $(\MM,d)$ is a compact space. 
 For    $s\geq 0$ and $\varepsilon >0$, set $$  \emph{B}_{\varepsilon}^{s} (\MM,d)=\inf\left\{ \Sigma_{n=1}^{m}(\emph{diam} (B_{n}))^{s}:\{ B_{n}\}_{n=1}^{m} \text{  is a   cover of }\MM  \text{ by open balls  with }\emph{diam} (B_n) \leq  \varepsilon\right\}.$$   Setting  
 $$   {\emph{dim}}^{\star}_{\emph{H}}(\MM,d,\varepsilon)=\sup\{s \geq 0:  \emph{B}_{\varepsilon}^{s} (\MM,d) \geq 1\},  $$
 we have that $$ \dim_{\emph{H}} (\MM,d)=\lim_{\varepsilon\rightarrow 0}\emph{dim}^{\star}_{\emph{H}}(\MM,d,\varepsilon).$$
 \end{lemma}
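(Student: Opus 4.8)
The plan is to sandwich $\text{B}_{\varepsilon}^{s}(\MM,d)$ between two values of the quantity $\text{H}_{\varepsilon}^{s}(\MM,d)$ from \eqref{vareps}, at the cost of a multiplicative constant, and then let that constant be absorbed using the $\Theta$-independence established in Lemma \ref{mg4g4l}. First I would dispose of the lower bound: any cover $\{B_{n}\}_{n=1}^{m}$ of $\MM$ by open balls with $\text{diam}(B_{n})\leq\varepsilon$ is, after padding with empty sets (recall the convention $\text{diam}(\emptyset)^{s}=0$), a countable cover of $\MM$ by sets of diameter $<2\varepsilon$, so it is admissible in the infimum \eqref{vareps} defining $\text{H}_{2\varepsilon}^{s}(\MM,d)$; hence $\text{H}_{2\varepsilon}^{s}(\MM,d)\leq\text{B}_{\varepsilon}^{s}(\MM,d)$ for every $s\geq 0$ and $\varepsilon>0$. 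Consequently $\{s\geq 0:\text{H}_{2\varepsilon}^{s}(\MM,d)\geq 1\}\subseteq\{s\geq 0:\text{B}_{\varepsilon}^{s}(\MM,d)\geq 1\}$, which gives $\text{dim}_{\text{H}}(\MM,d,2\varepsilon)\leq\text{dim}_{\text{H}}^{\star}(\MM,d,\varepsilon)$; letting $\varepsilon\to 0$ and invoking Lemma \ref{mg4g4l} yields $\dim_{\text{H}}(\MM,d)\leq\liminf_{\varepsilon\to 0}\text{dim}_{\text{H}}^{\star}(\MM,d,\varepsilon)$.

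For the reverse inequality I would argue as follows. Starting from a countable cover $\{E_{n}\}$ of $\MM$ with $\text{diam}(E_{n})<\varepsilon$ and $\sum_{n}(\text{diam}(E_{n}))^{s}\leq\text{H}_{\varepsilon}^{s}(\MM,d)+\delta$ (we may assume $\text{H}_{\varepsilon}^{s}(\MM,d)<\infty$, otherwise there is nothing to prove), I inflate each nonempty $E_{n}$ to the open set $U_{n}=\{x\in\MM:d(x,E_{n})<r_{n}\}$, with $r_{n}>0$ so small that still $\text{diam}(U_{n})<\varepsilon$ and $(\text{diam}(U_{n}))^{s}\leq(\text{diam}(E_{n}))^{s}+\delta 2^{-n}$. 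By compactness of $\MM$, a finite subfamily $U_{n_{1}},\dots,U_{n_{k}}$ covers $\MM$; for each $j$ I pick $y_{j}\in U_{n_{j}}$ and enlarge $U_{n_{j}}$ to an open ball $B_{j}=B(y_{j},\text{diam}(U_{n_{j}})+t_{j})\supseteq U_{n_{j}}$ with $t_{j}>0$ small enough that $\text{diam}(B_{j})<2\varepsilon$ and $(\text{diam}(B_{j}))^{s}\leq 2^{s}(\text{diam}(U_{n_{j}}))^{s}+\delta/k$. Then $\{B_{j}\}_{j=1}^{k}$ is an admissible family for $\text{B}_{2\varepsilon}^{s}(\MM,d)$, and summing these estimates gives $\text{B}_{2\varepsilon}^{s}(\MM,d)\leq 2^{s}\,\text{H}_{\varepsilon}^{s}(\MM,d)+O(\delta)$; letting $\delta\to 0$ one obtains $\text{B}_{\varepsilon}^{s}(\MM,d)\leq 2^{s}\,\text{H}_{\varepsilon/2}^{s}(\MM,d)$ for all $\varepsilon>0$ and $s\geq 0$.

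It then remains to combine the two bounds. If $\dim_{\text{H}}^{\ast}(\MM,d)=\infty$ the first step already gives the conclusion, so assume it is finite and fix $S>\dim_{\text{H}}^{\ast}(\MM,d)$. Suppose $\limsup_{\varepsilon\to 0}\text{dim}_{\text{H}}^{\star}(\MM,d,\varepsilon)>S$; then $\text{dim}_{\text{H}}^{\star}(\MM,d,\varepsilon_{k})>S$ for some sequence $\varepsilon_{k}\to 0$ with $\varepsilon_{k}<1$, and since $s\mapsto\text{B}_{\varepsilon}^{s}(\MM,d)$ is nonincreasing for $\varepsilon<1$ this forces $\text{B}_{\varepsilon_{k}}^{S}(\MM,d)\geq 1$, hence $\text{H}_{\varepsilon_{k}/2}^{S}(\MM,d)\geq 2^{-S}$ by the previous step; letting $k\to\infty$ gives $\text{H}^{S}(\MM,d)\geq 2^{-S}>0$, contradicting $S>\dim_{\text{H}}^{\ast}(\MM,d)$. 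Therefore $\limsup_{\varepsilon\to 0}\text{dim}_{\text{H}}^{\star}(\MM,d,\varepsilon)\leq\dim_{\text{H}}^{\ast}(\MM,d)$, and together with the first step and Lemma \ref{mg4g4l} this shows that $\lim_{\varepsilon\to 0}\text{dim}_{\text{H}}^{\star}(\MM,d,\varepsilon)$ exists and equals $\dim_{\text{H}}(\MM,d)$.

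The only genuinely delicate point is the inflation-and-conversion in the second paragraph: one must pass from arbitrary covering sets to open \emph{balls} while simultaneously not exceeding the diameter threshold by more than the harmless factor $2$, controlling the growth of the $s$-sum, and reducing to a finite subcover by compactness. The spurious factor $2^{s}$ coming from replacing a set by a ball of roughly twice the radius is precisely what makes Lemma \ref{mg4g4l} indispensable: it guarantees that the threshold constant (here in the role of $2^{-S}$) does not affect the limiting value $\lim_{\varepsilon\to 0}\text{dim}_{\text{H}}(\MM,d,\varepsilon,\Theta)$.
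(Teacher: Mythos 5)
Your proof is correct and follows essentially the same route as the paper: the two-sided comparison $\text{H}_{\varepsilon'}^{s}(\MM,d)\leq \text{B}_{\varepsilon}^{s}(\MM,d)\leq 2^{s}\,\text{H}_{\varepsilon/2}^{s}(\MM,d)$ together with Lemma \ref{mg4g4l} to absorb the constant $2^{s}$. The only differences are cosmetic: you actually prove the ball-versus-set inequality (which the paper cites from Falconer, Section 2.4), and you dispose of the resulting threshold $2^{-S}$ by a direct contradiction through $\dim_{\text{H}}^{\ast}(\MM,d)$ instead of the paper's use of the $\Theta$-version $\dim_{\text{H}}(\MM,d,\varepsilon/2,1/2^{t})$.
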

\begin{proof} We can prove that \begin{equation} \label{nbsff} \text{H}_{\varepsilon}^{s} (\MM,d)\leq  \text{B}_{\varepsilon}^{s} (\MM,d)\leq  2^{s}\text{H}_{{\varepsilon}/{2}}^{s} (\MM,d) \end{equation} (see \cite{Falconer}, Section 2.4). It follows from the first inequality in \eqref{nbsff} that  
 \begin{equation} \label{fefwfwfzx}  {\text{dim}}_{\text{H}}(\MM,d,\varepsilon)\leq {\text{dim}}^{\star}_{\text{H}}(\MM,d,\varepsilon).  \end{equation}  
  Next, if $t$ is such that $1\leq \text{B}_{\varepsilon}^{t} (\MM,d)$, then by \eqref{nbsff}   we have $\frac{1}{2^{t}}\leq \text{H}_{\varepsilon/2}^{t} (\MM,d)$. Therefore, 
  \begin{equation} \label{xfddefwfwfzx}{\text{dim}}^{\star}_{\text{H}}(\MM,d,\varepsilon) \leq \dim_{\text{H}}(\MM,d,\varepsilon/2,{1}/{2^{t}})  .  \end{equation}

From  \eqref{fefwfwfzx},  \eqref{xfddefwfwfzx} and Lemma \ref{mg4g4l}, we have   that  $$  \text{dim}_{\text{H}}(\MM,d)= \lim_{\varepsilon\rightarrow 0}{\text{dim}}^{\star}_{\text{H}}(\MM,d,\varepsilon) ,$$   as we want to prove.
\end{proof}

\begin{definition}
The \textit{upper mean Hausdorff dimension} and \textit{lower mean Hausdorff dimension} of $(\MM,d,f) $ are defined respectively as
$$ \overline{\text{mdim}}_{\text{H}}(\MM,d,f)=\lim_{\varepsilon\rightarrow 0} \left(\limsup_{n\rightarrow \infty}\frac{1}{n} \text{dim}_{\text{H}}(\MM,d_{n},\varepsilon)\right)=\lim_{\varepsilon\rightarrow 0} \left(\limsup_{n\rightarrow \infty}\frac{1}{n} \text{dim}^{\star}_{\text{H}}(\MM,d_{n},\varepsilon)\right), $$ $$\underline{\text{mdim}}_{\text{H}}(\MM,d,f)=\lim_{\varepsilon\rightarrow 0} \left(\liminf_{n\rightarrow \infty}\frac{1}{n} \text{dim}_{\text{H}}(\MM,d_{n},\varepsilon)\right)=\lim_{\varepsilon\rightarrow 0} \left(\liminf_{n\rightarrow \infty}\frac{1}{n} \text{dim}^{\star}_{\text{H}}(\MM,d_{n},\varepsilon)\right) $$ (see \cite{LT2019}, Section 3).
\end{definition}

\begin{remark}\label{obsscc} Denote by $\text{mdim}(\MM,f)$ the mean dimension of a  continuous map $f:\MM\rightarrow \MM$ (see \cite{lind}). The inequalities \begin{align*}
\text{mdim}(\MM,f)&\leq  \underline{\text{mdim}}_{\text{H}}(\MM,d,f) \leq \overline{\text{mdim}}_{\text{H}}(\MM,d,f)  \leq  \underline{\text{mdim}}_{\text{M}}(\MM,d,f)\leq \overline{\text{mdim}}_{\text{M}}(\MM,d,f)  \end{align*} always hold (see \cite{LT2019}). \end{remark}

Recently, in \cite{Liu}, the authors introduce the  concepts of mean packing dimension and mean pseudo-packing dimension for dynamical systems.  They proved that  the mean Hausdorff dimension of a dynamical system is lower than its mean packing dimension and its mean pseudo-packing dimension. Hence, the mean Hausdorff dimension remains a more accurate approximation of the mean topological dimension. 


\section{Some fundamental properties of the  mean Hausdorff dimension}\label{section4}

 Let $f:\MM\rightarrow \MM$ be a continuous map, and let $A \subset \MM$ be a   non-empty closed subset that is invariant under $f$. It is straightforward to observe that: 
$$\overline{\text{mdim}}_{\text{H}}(A,d,f|_{A}) \leq \overline{\text{mdim}}_{\text{H}}(\MM,d,f)\quad\text{and}\quad \underline{\text{mdim}}_{\text{H}}(A,d,f|_{A}) \leq \underline{\text{mdim}}_{\text{H}}(\MM,d,f).$$

Next, it is well-known that for any   $p\in \mathbb N$, 
we have 
  $$
  {\text{mdim}_{\text{M}}}(\MM,d,f^{p})\leq p\,  {\text{mdim}_{\text{M}}}(\MM,d,f).
  $$  In \cite{Muentes}, Corollary 3.4 provides a formula for $ {\text{mdim}_{\text{M}}}(\MM,d,f^{p})$ for a certain class of continuous maps on the interval (see Remark \ref{observacr}).   For the mean Hausdorff dimension, similar relationships apply. 

\begin{proposition}\label{propo211} Let $f:\MM\rightarrow \MM$ be a continuous map. 
For any   $p\in \mathbb N$, we have
  $$
  \underline{\emph{mdim}}_{\emph{H}}(\MM,d,f^{p})\leq p\,  \underline{\emph{mdim}}_{\emph{H}}(\MM,d,f)\quad \text{and}\quad 
  \overline{\emph{mdim}}_{\emph{H}}(\MM,d,f^{p})\leq p\,  \overline{\emph{mdim}}_{\emph{H}}(\MM,d,f).
  $$ 
\end{proposition}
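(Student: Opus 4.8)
The plan is to mimic the standard argument that $\text{mdim}_{\text{M}}(\MM,d,f^{p})\le p\,\text{mdim}_{\text{M}}(\MM,d,f)$, reducing everything to two monotonicity observations plus one elementary fact about $\liminf$ along arithmetic progressions. Throughout, fix $\varepsilon>0$. For the map $f^{p}$ denote its $n$-th dynamical metric by $\widetilde d_{n}(x,y)=\max_{0\le i\le n-1}d\bigl(f^{pi}(x),f^{pi}(y)\bigr)$; since $\{0,p,\dots,(n-1)p\}\subseteq\{0,1,\dots,pn-1\}$ we have the pointwise inequality $\widetilde d_{n}\le d_{pn}$, and trivially $d_{m}\le d_{m+1}$ for all $m\ge 1$. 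The only feature of the Hausdorff dimension I need is its monotonicity under refining the metric: if $\rho\le\rho'$ are two metrics on $\MM$, then every subset has $\rho$-diameter at most its $\rho'$-diameter, so \eqref{vareps} gives $\text{H}^{s}_{\varepsilon}(\MM,\rho)\le\text{H}^{s}_{\varepsilon}(\MM,\rho')$ for all $s\ge0$, whence $\text{dim}_{\text{H}}(\MM,\rho,\varepsilon)\le\text{dim}_{\text{H}}(\MM,\rho',\varepsilon)$ (the same holds with $\text{dim}^{\star}_{\text{H}}$, using open balls). Applying this to $\widetilde d_{n}\le d_{pn}$ gives $\text{dim}_{\text{H}}(\MM,\widetilde d_{n},\varepsilon)\le\text{dim}_{\text{H}}(\MM,d_{pn},\varepsilon)$, and applying it to $d_{m}\le d_{m+1}$ shows that the sequence $b_{m}:=\text{dim}_{\text{H}}(\MM,d_{m},\varepsilon)$ is non-decreasing in $m$.

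For the upper mean Hausdorff dimension I would then simply use that a $\limsup$ along a subsequence is at most the full $\limsup$:
\[ \limsup_{n\to\infty}\frac{1}{n}\,\text{dim}_{\text{H}}(\MM,\widetilde d_{n},\varepsilon)\ \le\ \limsup_{n\to\infty}\frac{1}{n}\,\text{dim}_{\text{H}}(\MM,d_{pn},\varepsilon)\ =\ p\limsup_{n\to\infty}\frac{b_{pn}}{pn}\ \le\ p\limsup_{m\to\infty}\frac{b_{m}}{m}, \]
and let $\varepsilon\to0$; by the definition of $\overline{\text{mdim}}_{\text{H}}$ the two sides converge to $\overline{\text{mdim}}_{\text{H}}(\MM,d,f^{p})$ and $p\,\overline{\text{mdim}}_{\text{H}}(\MM,d,f)$, which is the second inequality of the proposition.

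The lower case is the one genuine difficulty: along the subsequence $(pn)_{n}$ a $\liminf$ is in general $\ge$ the full $\liminf$, i.e.\ the "wrong" direction, so the same computation does not close. This is exactly where the monotonicity of $(b_{m})$ enters, through the elementary lemma: if $(b_{m})_{m\ge1}$ is non-decreasing with values in $[0,\infty]$, then $\displaystyle\liminf_{n\to\infty}\frac{b_{pn}}{pn}=\liminf_{m\to\infty}\frac{b_{m}}{m}$. To see the nontrivial inequality ($\le$), write an arbitrary $m$ as $m\in[pn,p(n+1))$ with $n=\lfloor m/p\rfloor$; monotonicity gives $b_{m}\ge b_{pn}$ and $m<p(n+1)$ gives $\frac{b_{m}}{m}\ge\frac{b_{pn}}{p(n+1)}$, so $\inf_{m\ge pN}\frac{b_{m}}{m}\ge\inf_{n\ge N}\frac{b_{pn}}{p(n+1)}$, and letting $N\to\infty$ together with $\frac{n}{n+1}\to1$ finishes it. (If some $b_{m}=\infty$ the statement is trivial, since then $b_{k}=\infty$ for all $k\ge m$ and both sides are infinite.)

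With this lemma in hand the lower estimate runs exactly as the upper one:
\[ \liminf_{n\to\infty}\frac{1}{n}\,\text{dim}_{\text{H}}(\MM,\widetilde d_{n},\varepsilon)\ \le\ \liminf_{n\to\infty}\frac{1}{n}\,\text{dim}_{\text{H}}(\MM,d_{pn},\varepsilon)\ =\ p\liminf_{n\to\infty}\frac{b_{pn}}{pn}\ =\ p\liminf_{m\to\infty}\frac{b_{m}}{m}, \]
and letting $\varepsilon\to0$ gives $\underline{\text{mdim}}_{\text{H}}(\MM,d,f^{p})\le p\,\underline{\text{mdim}}_{\text{H}}(\MM,d,f)$. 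The main obstacle, to reiterate, is the subsequence issue in the lower case; once it is dealt with via the monotonicity of $m\mapsto\text{dim}_{\text{H}}(\MM,d_{m},\varepsilon)$, everything else is routine bookkeeping with $\text{H}^{s}_{\varepsilon}$ under coarsening of the metric.
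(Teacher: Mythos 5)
Your proof is correct, and its core is the same as the paper's: both rest on the pointwise inequality $\max_{0\le j<m}d(f^{jp}(x),f^{jp}(y))\le\max_{0\le j<mp}d(f^{j}(x),f^{j}(y))$, which forces $\text{H}^{s}_{\varepsilon}$ (hence $\text{dim}_{\text{H}}(\MM,\cdot,\varepsilon)$) computed for the $m$-th dynamical metric of $f^{p}$ to be dominated by the one for $d_{mp}$, after which one divides by $m$ and passes to the limit. Where you go beyond the paper is the lower mean Hausdorff dimension: the paper writes only the $\limsup$ comparison and asserts that it "proves the proposition," leaving implicit the fact that a $\liminf$ taken along the subsequence $(mp)_{m}$ is in general larger, not smaller, than the full $\liminf$. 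You identify this correctly as the only genuine obstruction and close it with the monotonicity of $m\mapsto\text{dim}_{\text{H}}(\MM,d_{m},\varepsilon)$ (coming from $d_{m}\le d_{m+1}$) together with the elementary lemma that for a non-decreasing sequence $b_{m}\in[0,\infty]$ one has $\liminf_{n}b_{pn}/(pn)=\liminf_{m}b_{m}/m$; your proof of that lemma, including the $n/(n+1)\to1$ bookkeeping and the infinite-value case, is sound. So your argument is a slightly more careful version of the paper's proof, supplying the step the paper's $\liminf$ case actually needs.
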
 \begin{proof}
For any positive integer $m$, we know that
$$\max_{0\leq j<m}d(f^{jp}(x),f^{jp}(y))\leq \max_{0\leq j<mp}d(f^{j}(x),f^{j}(y)).$$ 
Hence, for each $s\geq 0$ and $ \varepsilon >0$, we have \begin{align*} 
\text{H}_{\varepsilon}^{s} (\MM,d_{m},f^{p}) &=\inf\left\{ \Sigma_{n=1}^{\infty}(\underset{d_{m,f^{p}}}{\text{diam}} E_{n})^{s}: \MM=\cup_{n=1}^{\infty} E_{n} \text{ with } \underset{d_{m,f^{p}}}{\text{diam}} E_{n}<\varepsilon\text{ for all }n\geq 1\right\}\\
&\leq \inf\left\{ \Sigma_{n=1}^{\infty}(\underset{d_{mp,f}}{\text{diam}} E_{n})^{s}: \MM=\cup_{n=1}^{\infty} E_{n} \text{ with } \underset{d_{mp,f}}{\text{diam}} E_{n}<\varepsilon\text{ for all }n\geq 1\right\}\\
&=\text{H}_{\varepsilon}^{s}(\MM,d_{mp},f),\end{align*}
where $ \underset{d_{m,f}}{\text{diam}}$ represents the diameter with respect to the dynamic metric $d_{m}$ associated to $f$.   Therefore, $$\text{dim}_{\text{H}}(\MM,d_{m},\varepsilon, f^{p} )\leq  \text{dim}_{\text{H}}(\MM,d_{mp},\varepsilon, f  ) $$ and hence
$$
\limsup_{m\to \infty}\frac{1}{m} \text{dim}_{\text{H}}(\MM,d_{m},\varepsilon, f^{p} )
                                    \leq p\limsup_{m\to \infty}\frac{1}{mp}   \text{dim}_{\text{H}}(\MM,d_{mp},\varepsilon, f  ). $$ 
This fact proves the proposition. 
\end{proof}

 Next, consider two continuous maps $f \colon \MM \to \MM$ and $g \colon \EE \to \EE$, where $(\MM,d)$ and $(\EE,d')$ are compact metric spaces. We will endow the product space $\MM \times \EE$ with the metric 
 \begin{equation}\label{bnm}(d\times d^{\prime})((x_{1},y_{1}),(x_{2},y_{2}))=\max\{d(x_{1},x_{2}),d^{\prime}(y_{1},y_{2})\}, \text{ for } x_1,x_2 \in \MM \text{ and } y_1,y_2 \in \EE.\end{equation} 
 This metric is uniformly equivalent  to (see Remark \ref{fefefeesee}) the both metrics
\begin{equation*}\label{bnms}(d\times d^{\prime})^{\ast}((x_{1},y_{1}),(x_{2},y_{2}))=d(x_{1},x_{2}) + d^{\prime}(y_{1},y_{2}), \text{ for } x_1,x_2 \in \MM \text{ and } y_1,y_2 \in \EE.\end{equation*} 
\begin{equation*}\label{zqwbnm}\overline{(d\times d^{\prime})}((x_{1},y_{1}),(x_{2},y_{2}))=\sqrt{d(x_{1},x_{2})^{2} + d^{\prime}(y_{1},y_{2})^{2}}, \text{ for } x_1,x_2 \in \MM \text{ and } y_1,y_2 \in \EE.\end{equation*}

 It is well known that 
$$\text{dim}_{\text{H}}(\MM\times\EE )\geq \text{dim}_{\text{H}}(\MM)+\text{dim}_{\text{H}}(\EE)  $$ (see \cite{Falconer}, Chapter 7). In Proposition \ref{prdoctv} we will prove the analog result for mean Hausdorff dimension. We  will   use the next lemmas. 

\begin{lemma}\label{mass}
Let $(\MM,d)$ be a compact metric space and $\varepsilon>0$. Suppose   there is a Borel measure $\mu$ on $(\MM,d)$ such that $\mu(\MM) \geq 1$ and for any open  ball  $E_{i}$   with $\emph{diam}_d E_{i} \leq \varepsilon$, we have that
$$
    \mu(E_{i}) \leq (\emph{diam}_d (E_{i}))^{{s}} \quad\text{for any }i\geq 1.
$$
Then, 
$$\emph{dim}^{\star}_{\emph{H}}(\MM,d,\varepsilon) \geq  {s}.$$
\end{lemma}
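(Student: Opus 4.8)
The plan is to exhibit, for a value $t < s$ arbitrarily close to $s$, a cover of $\MM$ by open balls of diameter at most $\varepsilon$ whose $t$-th power sum of diameters is bounded below by a positive constant depending only on $\mu$, and then push $t \to s$. Concretely, I would start from an arbitrary cover $\{B_n\}_{n=1}^m$ of $\MM$ by open balls with $\operatorname{diam}_d B_n \leq \varepsilon$ and use the hypothesis $\mu(B_n) \leq (\operatorname{diam}_d B_n)^s$ together with $\mu(\MM) \geq 1$ to estimate
\[
1 \leq \mu(\MM) \leq \sum_{n=1}^{m} \mu(B_n) \leq \sum_{n=1}^{m} (\operatorname{diam}_d B_n)^{s}.
\]
This already shows $\operatorname{B}^{s}_{\varepsilon}(\MM,d) \geq 1$, hence $s$ is one of the values in the supremum defining $\operatorname{dim}^{\star}_{\mathrm H}(\MM,d,\varepsilon)$, giving $\operatorname{dim}^{\star}_{\mathrm H}(\MM,d,\varepsilon) \geq s$ directly. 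So the argument is in fact a one-line application of countable (here finite) subadditivity of the measure $\mu$ over the cover.

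The only point requiring a word of care is that the infimum defining $\operatorname{B}^{s}_{\varepsilon}(\MM,d)$ ranges over \emph{all} finite covers of $\MM$ by open balls of diameter $\leq \varepsilon$, and the estimate above is uniform over such covers: for every admissible cover $\{B_n\}_{n=1}^m$ we have $\sum_{n=1}^{m} (\operatorname{diam}_d B_n)^{s} \geq 1$, so the infimum over all of them is still $\geq 1$. Here I use that $\mu$ is a (Borel) measure, so that $\mu\!\left(\bigcup_{n=1}^m B_n\right) \leq \sum_{n=1}^m \mu(B_n)$, and that each $B_n$, being an open ball of $d$-diameter $\leq \varepsilon$, falls under the hypothesis. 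Compactness of $\MM$ is what guarantees that such finite ball covers exist and is implicitly used when passing to the definition of $\operatorname{dim}^{\star}_{\mathrm H}$ from Lemma~\ref{fege6}; it is not otherwise needed in the estimate.

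There is no real obstacle here: the statement is essentially the classical mass distribution principle (Falconer, \emph{Fractal Geometry}, Principle 4.2) adapted to the finite-ball-cover formulation $\operatorname{B}^{s}_{\varepsilon}$ of Lemma~\ref{fege6}, and the proof is the standard one. If anything, the subtlety is purely bookkeeping: making sure the diameter threshold "$\leq \varepsilon$" in the hypothesis on $\mu$ matches exactly the threshold in the definition of $\operatorname{B}^{s}_{\varepsilon}(\MM,d)$, so that the hypothesis applies to every ball appearing in any admissible cover. Once that is checked, the conclusion $\operatorname{dim}^{\star}_{\mathrm H}(\MM,d,\varepsilon) \geq s$ is immediate from the definition of the supremum.
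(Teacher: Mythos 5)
Your proposal is correct and is essentially the paper's own argument: for any finite cover of $\MM$ by open balls of diameter at most $\varepsilon$, subadditivity of $\mu$ together with the hypothesis gives $\sum_n (\mathrm{diam}_d B_n)^s \geq \mu(\MM) \geq 1$, hence $\mathrm{B}^s_\varepsilon(\MM,d)\geq 1$ and $\mathrm{dim}^{\star}_{\mathrm{H}}(\MM,d,\varepsilon)\geq s$. The preliminary detour through $t<s$ is unnecessary, as you yourself note, and the rest matches the paper's proof of Lemma \ref{mass} step for step.
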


\begin{proof}
Fix  $\varepsilon > 0$   and take  a finite cover $\lbrace E_i \rbrace_{i =1}^{m}$  of $\MM$, by balls $E_{i}$  with   $\text{diam}_d (E_i) \leq \varepsilon$. We have that
 \begin{equation}\label{vevezwwww}
   \Sigma_{i =1}^{m} (\text{diam}_d(E_{i}))^{s}
   \geq   \Sigma_{i=1}^{m} \mu(E_{i}) \geq   \mu(\cup_{k =1}^{m} E_{i}) =  \mu (\MM) = 1.
\end{equation}
Hence, 
$\text{B}^{{s}}_{\varepsilon}(\MM,d) \geq 1$  and therefore $ \text{dim}^{\star}_{\text{H}}(\MM,d,\varepsilon) \geq {s}$ (see Lemma \ref{fege6}).
\end{proof}

The Lemma \ref{mass} is an adaption of the  \textit{Mass Distribution Principle}  (see \cite{Falconer}, Chapter 4), which states that if  there is a mass distribution   $\mu$ on $(\MM,d)$ 
 and for some  $s$ there are numbers $c>0$ and $\varepsilon>0$ such that  $\mu(E_{i})\leq c(\text{diam}_d (E_{i}))^{s}$ for any set $E_{i}$ with $\text{diam}_d (E_{i}) \leq \varepsilon$, we have that
$$\text{dim}_{\text{H}}(\MM,d) \geq  {s}.$$   We choose the version in Lemma \ref{mass}, because it is more compatible with the definition of mean Hausdorff dimension used in this work.

\begin{lemma}\label{frostman}
Let $c\in (0,1)$. There exists $ \varepsilon_0 =\varepsilon_0(c)\in(0,1)$ depending only on $c$ and  such that: for any compact metric space $(\MM, d)$ and $0 < \varepsilon \leq \varepsilon_0$ there exists a Borel probability measure $\mu$ on $(\MM,d)$ such that
$$\mu(E) \leq (\emph{diam}_d (E))^{c \emph{dim}_\emph{H}(\MM,d,\varepsilon)} $$
for all $E \subset \MM$ with $\emph{diam}_d (E) < \frac{\varepsilon}{6}.$
\end{lemma}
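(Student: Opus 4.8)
This is a finite‑scale, general compact metric space version of Frostman's lemma, and the plan is to prove it by the mass distribution principle (as in Lemma \ref{mass}) applied to a measure produced by a Carleson‑type hierarchical construction, after first recasting the hypothesis as a lower bound on a Hausdorff content. The slack provided by $c<1$ in the exponent — together with the fixed factor $6$ in the scale — is exactly what will absorb the unavoidable multiplicative constants, at the cost of a smallness requirement on $\varepsilon$ that depends only on $c$; this is the origin of $\varepsilon_0=\varepsilon_0(c)$.

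First I would reduce to a content estimate. Impose $\varepsilon_0<1$. If $s:=\text{dim}_{\text{H}}(\MM,d,\varepsilon)=0$ the inequality is trivial, since its right‑hand side is $1$ and any Borel probability measure works; so assume $s>0$. Every set occurring in \eqref{vareps} has diameter $<\varepsilon<1$, so $u\mapsto\text{H}^u_\varepsilon(\MM,d)$ is non‑increasing; since $\text{dim}_{\text{H}}(\MM,d,\varepsilon)=\sup\{u\ge 0:\text{H}^u_\varepsilon(\MM,d)\ge 1\}$ by definition, this gives $\text{H}^u_\varepsilon(\MM,d)\ge 1$ for every $u\in[cs,s)$. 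I would then fix $t:=\tfrac{1+c}{2}s$, so that $cs<t<s$ and $\text{H}^t_\varepsilon(\MM,d)\ge 1$.

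Next I would build the measure. Put $\rho_k:=\tfrac{\varepsilon}{6}2^{-k}$ for $k\ge 0$; for each $k$ choose a maximal $\rho_k$‑separated set $S_k\subset\MM$ (finite, by compactness) and, by top‑down refinement, a nested sequence of finite Borel partitions of $\MM$ whose cells at level $k$ all have the form $Q\subset B(z,\rho_k)$ with $z\in S_k$, so that $\text{diam}_d Q<2\rho_k<\varepsilon$. On this hierarchy I would run Carleson's bottom‑up redistribution: truncating at a large level $N$, put mass $(\text{diam}_d Q)^t$ on each level‑$N$ cell $Q$ and, for $k=N-1,N-2,\dots,0$ in turn, rescale the measure restricted to any level‑$k$ cell whose current mass exceeds $(\text{diam}_d Q)^t$ down to exactly that value. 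The resulting measure $\mu_N$ then satisfies $\mu_N(Q)\le(\text{diam}_d Q)^t$ for every cell of generation $\le N$; moreover the family $\mathcal{C}$ consisting of the maximal cells that were ever rescaled, together with the level‑$N$ cells having no rescaled ancestor, is a finite Borel partition of $\MM$ into sets of diameter $<\varepsilon$ on each of which $\mu_N$ has mass exactly $(\text{diam}_d\,\cdot\,)^t$, whence $\mu_N(\MM)=\sum_{Q\in\mathcal{C}}(\text{diam}_d Q)^t\ge\text{H}^t_\varepsilon(\MM,d)\ge 1$. A weak‑$\ast$ subsequential limit $\mu$ of the probability measures $\mu_N/\mu_N(\MM)$ would be a Borel probability measure on $\MM$ with $\mu(Q)\le(\text{diam}_d Q)^t$ for every cell $Q$ in the hierarchy (choosing the construction so that all cell boundaries are $\mu$‑null, or a similar approximation, handles the passage to the limit).

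Finally I would pass from cells to an arbitrary $E\subset\MM$ with $\text{diam}_d E<\varepsilon/6=\rho_0$: pick $k\ge 0$ with $\rho_{k+1}\le\text{diam}_d E<\rho_k$ and $x_0\in E$, so that $E\subset\overline{B}(x_0,\text{diam}_d E)\subset\overline{B}(x_0,\rho_k)$, and arrange — this is the delicate point, see below — that $\overline{B}(x_0,\rho_k)$ lies in a single cell $Q$ a bounded number of levels coarser than $k$, so that $\text{diam}_d Q\le C\rho_k$ for an absolute constant $C$. Using $\rho_k\le 2\,\text{diam}_d E$, $\text{diam}_d E<\varepsilon$ and $t-cs=\tfrac{1-c}{2}s>0$, one then gets
\[
\mu(E)\le(\text{diam}_d Q)^t\le(2C\,\text{diam}_d E)^t\le(2C)^t\varepsilon^{\,t-cs}(\text{diam}_d E)^{cs},
\]
and since $(2C)^t\varepsilon^{\,t-cs}=\big((2C)^{(1+c)/2}\varepsilon^{(1-c)/2}\big)^{s}$, this prefactor is $\le 1$ for every $s>0$ as soon as $\varepsilon\le(2C)^{-(1+c)/(1-c)}$, so $\varepsilon_0(c):=\min\{1/2,\,(2C)^{-(1+c)/(1-c)}\}$ works. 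The hard part will be that last geometric point: a general compact metric space need not be doubling, and a ball need not sit inside a single cell of any hierarchy, so one must run the construction with two (or boundedly many) slightly translated hierarchies — so that every small enough ball lies in a single cell of one of them — or impose the capping directly on balls as well as cells; by contrast the content reduction and the Carleson redistribution are routine.
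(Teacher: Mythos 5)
The paper does not actually supply an argument here: its ``proof'' is a citation to \cite{LT2019}, Lemma 4.5, where the result is established via weighted Hausdorff contents and a duality (Hahn--Banach/minimax) argument; that route is chosen precisely because it works in an arbitrary compact metric space, with the factor $6$ and the exponent loss $c$ absorbed in the comparison between weighted and unweighted contents. Your content reduction (monotonicity of $u\mapsto \text{H}^u_\varepsilon$ for $\varepsilon<1$, hence $\text{H}^t_\varepsilon\geq 1$ for $t=\frac{1+c}{2}\,\text{dim}_{\text{H}}(\MM,d,\varepsilon)$), the nested-partition construction with $\text{diam}_d Q\leq C\rho_k$, the bottom-up Carleson redistribution giving $\mu_N(Q)\leq(\text{diam}_d Q)^t$ together with $\mu_N(\MM)=\sum_{Q\in\mathcal{C}}(\text{diam}_d Q)^t\geq \text{H}^t_\varepsilon(\MM,d)\geq 1$, and the final arithmetic fixing $\varepsilon_0(c)$ are all sound.

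The genuine gap is exactly the step you flag and then leave unresolved: passing from cells to an arbitrary set $E$. In a general compact metric space there is no doubling property, so a ball $\overline{B}(x_0,\rho_k)$ need not lie in any single cell a bounded number of levels coarser, and it can meet arbitrarily many level-$k$ cells (the relevant cardinality is a doubling-type constant that may blow up as $k$ varies, e.g.\ for infinite-dimensional compacta such as Hilbert cubes, which are precisely the spaces this lemma is needed for in mean-dimension theory). The two fixes you sketch do not close this: the ``boundedly many translated hierarchies'' trick is special to $\mathbb{R}^n$ (or doubling spaces), since the number of shifted grids needed is controlled by the doubling constant and no group of translations is available; and ``capping directly on balls'' destroys the tree structure on which both the bottom-up redistribution and the identity $\mu_N(\MM)=\sum_{Q\in\mathcal{C}}(\text{diam}_d Q)^t$ rely (balls overlap, the stopping family is no longer a disjoint cover), so one loses the lower bound $\mu_N(\MM)\geq \text{H}^t_\varepsilon$; making a ball-capped construction work is essentially equivalent to proving the lemma, which is why the literature (Howroyd, Mattila, and \cite{LT2019}) proceeds by weighted contents and duality instead. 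A secondary, more repairable weak point is the weak-$\ast$ limit: the portmanteau inequalities give $\mu(F)\geq\limsup_N\nu_N(F)$ for closed $F$ and upper bounds only on open sets, so transferring $\nu_N(Q)\leq(\text{diam}_d Q)^t$ to the limit requires the boundary/neighborhood argument you allude to, and in the non-doubling setting that argument runs into the same unbounded-overlap problem. As it stands, the proposal proves the statement only under an additional doubling hypothesis, which the lemma (and its use in Proposition \ref{prdoctv} and Theorem \ref{theo1}) cannot afford.
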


\begin{proof}
See \cite{LT2019}, Lemma 4.5.
\end{proof}
\begin{proposition}\label{prdoctv}  Take two continuous maps $f:\MM\rightarrow \MM$ and $g:\EE\rightarrow \EE$. On $\MM\times \EE$   consider the metric given in \eqref{bnm}.  We have:
$$\underline{\emph{mdim}}_{\emph{H}}(\MM \times \EE,  d \times d^{\prime}, f \times g) \geq \underline{\emph{mdim}}_{\emph{H}}(\MM, d, f) + \underline{\emph{mdim}}_{\emph{H}}(\EE, d' , g).$$
\end{proposition}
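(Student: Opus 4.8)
The plan is to run the Frostman/Mass‑Distribution‑Principle argument familiar from the lower bound on the Hausdorff dimension of products, but carried out uniformly at the level of the dynamical metrics $d_n$, $d_n'$ and $(d\times d')_n$. The starting point is the purely formal identity
$$(d\times d')_n\big((x,y),(x',y')\big)=\max\{d_n(x,x'),\,d_n'(y,y')\}\qquad(n\ge1),$$
which follows from $(f\times g)^j=f^j\times g^j$ together with the definition \eqref{bnm} of the maximum metric. A consequence I will use repeatedly is that an open ball of $(d\times d')_n$ centered at $(x_0,y_0)$ of radius $r$ equals the product $B_{d_n}(x_0,r)\times B_{d_n'}(y_0,r)$, and that its $(d\times d')_n$-diameter is the maximum of the $d_n$-diameter of the first factor and the $d_n'$-diameter of the second.

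Now fix $c\in(0,1)$ and let $\varepsilon_0=\varepsilon_0(c)$ be the constant of Lemma \ref{frostman}. For every $n\ge1$ and every $0<\varepsilon\le\varepsilon_0$, apply Lemma \ref{frostman} to the compact metric spaces $(\MM,d_n)$ and $(\EE,d_n')$: this yields Borel probability measures $\mu_{n,\varepsilon}$ on $\MM$ and $\nu_{n,\varepsilon}$ on $\EE$ with
$$\mu_{n,\varepsilon}(E)\le(\text{diam}_{d_n}E)^{c\,a_n},\qquad \nu_{n,\varepsilon}(F)\le(\text{diam}_{d_n'}F)^{c\,b_n},$$
where $a_n=\dim_{\text{H}}(\MM,d_n,\varepsilon)$ and $b_n=\dim_{\text{H}}(\EE,d_n',\varepsilon)$, valid for all sets of $d_n$- (resp.\ $d_n'$-) diameter $<\varepsilon/6$. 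Consider the Borel probability measure $\mu_{n,\varepsilon}\times\nu_{n,\varepsilon}$ on $\MM\times\EE$. If $E_i$ is an open ball of $(d\times d')_n$ with $\text{diam}_{(d\times d')_n}E_i=\delta_i\le\varepsilon/7$, then $E_i$ splits as a product of balls as above, each factor has diameter $\le\delta_i<1$, and hence
$$(\mu_{n,\varepsilon}\times\nu_{n,\varepsilon})(E_i)\le\delta_i^{\,c a_n}\,\delta_i^{\,c b_n}=\delta_i^{\,c(a_n+b_n)},$$
using that $t\mapsto t^{\alpha}$ is non-decreasing on $[0,\infty)$ for $\alpha\ge0$. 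Since $\mu_{n,\varepsilon}\times\nu_{n,\varepsilon}$ has total mass $1$, Lemma \ref{mass} gives $\dim^{\star}_{\text{H}}\big(\MM\times\EE,(d\times d')_n,\varepsilon/7\big)\ge c(a_n+b_n)$.

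Dividing by $n$, taking $\liminf_{n\to\infty}$ and using the superadditivity $\liminf_n(a_n+b_n)\ge\liminf_n a_n+\liminf_n b_n$, I obtain
$$\liminf_{n\to\infty}\frac1n\,\dim^{\star}_{\text{H}}\big(\MM\times\EE,(d\times d')_n,\varepsilon/7\big)\ \ge\ c\Big(\liminf_{n\to\infty}\frac{\dim_{\text{H}}(\MM,d_n,\varepsilon)}{n}+\liminf_{n\to\infty}\frac{\dim_{\text{H}}(\EE,d_n',\varepsilon)}{n}\Big).$$
Letting $\varepsilon\to0$: the left-hand side tends to $\underline{\text{mdim}}_{\text{H}}(\MM\times\EE,d\times d',f\times g)$ by the $\dim^{\star}_{\text{H}}$-form of the definition of $\underline{\text{mdim}}_{\text{H}}$, while the two terms on the right tend to $\underline{\text{mdim}}_{\text{H}}(\MM,d,f)$ and $\underline{\text{mdim}}_{\text{H}}(\EE,d',g)$ respectively, all three limits existing because $\varepsilon\mapsto\dim_{\text{H}}(\cdot,\cdot,\varepsilon)$ is non-increasing, as observed in the proof of Lemma \ref{mg4g4l}. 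Finally, letting $c\uparrow1$ removes the factor $c$ and gives the asserted inequality; the cases where one summand is $+\infty$ need no separate treatment since $c\cdot\infty=\infty$.

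The argument is essentially bookkeeping, and the points that require care are: (i) applying Lemma \ref{frostman} with the same $c$ and the same $\varepsilon$ to both factors, so that the exponents of the two Frostman measures combine correctly under the product measure; (ii) the observation that open balls of the maximum dynamical metric are genuine products of balls, which is exactly what makes $\mu_{n,\varepsilon}\times\nu_{n,\varepsilon}$ eligible for Lemma \ref{mass} with a controlled exponent; and (iii) keeping the two limit operations $\varepsilon\to0$ and $c\to1$ together with the $\liminf$ superadditivity in the correct order. I do not anticipate a genuine conceptual obstacle beyond these.
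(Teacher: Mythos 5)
Your proof is correct and follows essentially the same route as the paper: Frostman measures from Lemma \ref{frostman} applied to the dynamical metrics $d_n$ and $d_n'$, the product measure combined with the product structure of balls in the maximum metric, Lemma \ref{mass} to bound $\dim^{\star}_{\text{H}}$ of the product, and then the limits in $n$, $\varepsilon$ and $c$. The only differences are bookkeeping (fixing $c$ and letting $c\uparrow 1$ at the end instead of the paper's sequences $c_k\to 1$, $\delta_k\to 0$, and $\varepsilon/7$ in place of $\delta/6$), which do not change the argument.
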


\begin{proof} First, we will prove for any $0 < c < 1$ there is $\delta_0 = \delta_0(c) \in (0,1)$ such that, for all $\delta \in (0, \delta_0]$, we have
$$\text{dim}_{\text{H}}(\MM \times \EE,d \times d^{\prime}, \delta / 6) \geq c ( \text{dim}_{\text{H}}(\MM,d,\delta) + \text{dim}_{\text{H}}(\EE,d^{\prime},\delta)).$$ Fix $0< c < 1$. It  follows  from Lemma \ref{frostman} that there is $\delta_0 = \delta_0(c) \in (0,1)$ such that for all $\delta \in (0, \delta_0]$ there are Borel probabilities measures $\mu$ and $\nu$ in $(\MM,d)$ and $(\EE,d')$, respectively, satisfying 
$$\mu(M) \leq (\text{diam}_d(M))^{c \text{dim}_{\text{H}}(\MM,d,\delta)}\quad\text{
and }\quad \nu(E) \leq (\text{diam}_{d^{\prime}}(E))^{c \text{dim}_{\text{H}}(\EE,d^{\prime},\delta)}$$
for all $M \subset \MM$ and $E \subset \EE$ with $\text{diam}_d (M) < \frac{\delta}{6}$ and  $\text{diam}_{d^{\prime}} (E) < \frac{\delta}{6}$. Observe that
$$\text{diam}_{d\times d^{\prime}}(M \times E) \geq \max(\text{diam}_d (M), \text{diam}_{d^{\prime}}(E)).$$
If $B$ is a ball in $\MM \times \EE$  with the metric \eqref{bnm}, then $B=M\times E$, where $M\subseteq \MM$ and $E\subseteq \EE$.  Next, for all $M \times E \subseteq \MM \times \EE$ such that $\text{diam}_{d \times d'}(M \times E) < \frac{\delta}{6}$, we have
    \begin{align*}
    (\mu \times \nu) (M\times E) & = \mu(M) \nu(E)  \leq (\text{diam}_d(M))^{c \text{dim}_{\text{H}}(\MM,d,\delta)}(\text{diam}_{d'}(E))^{c \text{dim}_{\text{H}}(\EE,d',\delta)}\\
    & \leq (\text{diam}_{d\times d'}(M \times E))^{c \text{dim}_{\text{H}}(\MM,d,\delta)}(\text{diam}_{d\times d'}(M \times E))^{c \text{dim}_{\text{H}}(\EE,d',\delta)}\\
    & = (\text{diam}_{d\times d'}(M \times E))^{c( \text{dim}_{\text{H}}(\MM,d,\delta) + \text{dim}_{\text{H}}(\EE,d',\delta))}.
    \end{align*}
By Lemma \ref{mass}, we get $$\text{dim}_{\text{H}}(\MM \times \EE,d \times d^{\prime}, \delta / 6) \geq c ( \text{dim}_{\text{H}}(\MM,d,\delta) + \text{dim}_{\text{H}}(\EE,d^{\prime},\delta)).$$

Next, for each $k\geq 1$, take  $ c_{k} \in (0,1)$ such that $c_{k}\rightarrow 1$ as $k\rightarrow \infty$. It  follows from the above fact  there is a $\delta_{k}(c_{k}) = \delta_k \in (0,1)$ such that $\delta_{k}\rightarrow 0$ as $k\rightarrow \infty$ and 
$$\text{dim}_{\text{H}}(\MM \times \EE,(d \times d^{\prime})_n, \delta_{k}/6) \geq c_{k} ( \text{dim}_{\text{H}}(\MM,d_n,\delta_{k}) + \text{dim}_{\text{H}}(\EE,d'_n,\delta_{k})),$$
for all $ n ,k\in \NN$. Hence, for each $k, n$,  we have 
$$
\frac{1}{n}\text{dim}_{\text{H}}(\MM \times \EE,(d \times d')_n, \delta_{k}/6) \geq \frac{c_{k}}{n} ( \text{dim}_{\text{H}}(\MM,d_n,\delta_{k}) + \text{dim}_{\text{H}}(\EE,d'_n,\delta_{k})) .
$$

Therefore,  taking  the  limit infimum as   $n \to \infty$ and the limit as $k \to \infty$, we have $$\underline{\text{mdim}}_{\text{H}}(\MM \times \EE , d \times d^{\prime}, f \times g) \geq \underline{\text{mdim}}_{\text{H}}(\MM, d, f) + \underline{\text{mdim}}_{\text{H}}(\EE , d^{\prime}, g),$$ 
which proves the result.
\end{proof}

Let $\mathbb{K}=\mathbb{N}$ or $\mathbb{Z}$. For $\bar{x} = (x_k),\bar{y}=(y_k) \in \MM^{\mathbb{K}}$, set 
 \begin{equation}\label{metrionmk} {\textbf{d}}(\bar{x},\bar{y}) = \underset{j \in\mathbb{K}}{\sum} \frac{1}{2^{|j|}}d(x_k,y_k).\end{equation}  Let $\sigma:\MM^{\mathbb{K}} \rightarrow \MM^{\mathbb{K}}  $ be the left shift map. In \cite{lind}, it is proved that $$    {\text{mdim}}(\MM^{\mathbb{K}} ,\sigma) \leq  {\text{dim}}(\MM).$$
In \cite{VV}, it is proved that $$    \underline{\text{mdim}}_{\text{M}}(\MM^{\mathbb{K}} ,\text{\textbf{d}},\sigma) =\underline{\text{dim}}_{\text{B}}(\MM,d)\quad\text{and}\quad \overline{\text{mdim}}_{\text{M}}(\MM^{\mathbb{K}} ,\text{\textbf{d}},\sigma) = \overline{\text{dim}}_{\text{B}}(\MM,d).$$
 We address these facts for the case of the mean Hausdorff dimension. We will need the following lemma:

\begin{lemma}\label{czzzvrv} Let $\sigma:\MM^{\mathbb{K}} \rightarrow \MM^{\mathbb{K}}  $ be the left shift map, with $\mathbb{K}=\mathbb{N}$ or $\mathbb{Z}$. Let $\mathcal{T}$  be the set consisting of all  finite open cover $\{C_{i}\}_{i=1}^{m}$ 
of $\MM^{\mathbb{K}}  $, such that each $C_{i}$ has  the form $C_{i} = A_{i,1} \times A_{i,2} \times \dots \times A_{i,\beta} \times \MM \times \MM \times \cdots$ and $A_{i,j}$ is an open subset of $\MM$, for $i=1,\dots, m$, $j=1,\dots,\beta$.  For every  $s\geq0$ and $\varepsilon >0$,  set \begin{equation*}\label{varepsccc}\emph{P}_{\varepsilon}^{s} (\MM^{\mathbb{N}},\emph{\textbf{d}}_{n})=\underset{\{C_{i}\}_{i=1}^{m}\in \mathcal{T}}{\inf}\left\{ \Sigma_{i=1}^{m}(\emph{diam}_{\emph{\textbf{d}}_{n}} ( C_{n}))^{s}: \MM^{\mathbb{K}}=\cup_{i=1}^{m} C_{i} \text{ with } \emph{diam}_{\emph{\textbf{d}}_{n}} (C_{i})<\varepsilon\right\}. \end{equation*}  Let $\Theta >0$ and set $$ \emph{dim}^{\bullet}_{\emph{H}}(\MM,\emph{\textbf{d}}_{n},\varepsilon,\Theta)=\sup\{s \geq 0:  \emph{P}_{\varepsilon}^{s} (\MM^{\mathbb{N}},\emph{\textbf{d}}_{n}) \geq \Theta\}.  $$ We have that
\begin{equation}\label{qwwsxccc}\underline{\emph{mdim}}_{\emph{H}}(\MM^{\mathbb{K}} ,\emph{\textbf{d}},\sigma) =\lim_{\varepsilon\rightarrow 0} \left(\liminf_{n\rightarrow \infty}\frac{1}{n} \emph{dim}^{\bullet}_{\emph{H}}(\MM,\emph{\textbf{d}}_{n},\varepsilon,\Theta)\right)\end{equation} and 
\begin{equation}\label{qwsxccc}\overline{\emph{mdim}}_{\emph{H}}(\MM^{\mathbb{K}} ,\emph{\textbf{d}},\sigma) =\lim_{\varepsilon\rightarrow 0} \left(\limsup_{n\rightarrow \infty}\frac{1}{n} \emph{dim}^{\bullet}_{\emph{H}}(\MM,\emph{\textbf{d}}_{n},\varepsilon,\Theta)\right).\end{equation}
\end{lemma}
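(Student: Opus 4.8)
The plan is to prove \eqref{qwwsxccc} and \eqref{qwsxccc} by squeezing the box‑cylinder content $\text{P}^{s}_{\varepsilon}(\MM^{\KK},\mathbf{d}_{n})$ between the ordinary Hausdorff contents $\text{H}^{s}_{\varepsilon}$ and $\text{B}^{s}_{\varepsilon}$ of $(\MM^{\KK},\mathbf{d}_{n})$, the losses incurred in these comparisons being absorbed once one divides by $n$, lets $n\to\infty$ and then $\varepsilon\to0$. As a preliminary reduction I would first show that the right‑hand sides of \eqref{qwwsxccc} and \eqref{qwsxccc} do not depend on $\Theta$: since every cover set admitted in the definition of $\text{P}^{s}_{\varepsilon}$, $\text{H}^{s}_{\varepsilon}$ or $\text{B}^{s}_{\varepsilon}$ has diameter $<\varepsilon<1$, for $0<s'<s$ one has $(\text{diam})^{s'}\ge\varepsilon^{-(s-s')}(\text{diam})^{s}$, so $\text{P}^{s'}_{\varepsilon}(\MM^{\KK},\mathbf{d}_{n})\ge\varepsilon^{-(s-s')}\text{P}^{s}_{\varepsilon}(\MM^{\KK},\mathbf{d}_{n})$; exactly as in the proof of Lemma \ref{mg4g4l} this forces $|\text{dim}^{\bullet}_{\text{H}}(\MM,\mathbf{d}_{n},\varepsilon,\Theta_{1})-\text{dim}^{\bullet}_{\text{H}}(\MM,\mathbf{d}_{n},\varepsilon,\Theta_{2})|\le|\log(\Theta_{1}/\Theta_{2})|/|\log\varepsilon|$, uniformly in $n$. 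Dividing by $n$ and letting $n\to\infty$, the quantities $\liminf_{n}\frac1n\text{dim}^{\bullet}_{\text{H}}(\MM,\mathbf{d}_{n},\varepsilon,\Theta)$ and $\limsup_{n}\frac1n\text{dim}^{\bullet}_{\text{H}}(\MM,\mathbf{d}_{n},\varepsilon,\Theta)$ are independent of $\Theta$; hence it suffices to treat $\Theta=1$, in which case the left‑hand sides of \eqref{qwwsxccc}, \eqref{qwsxccc} are, by definition, $\underline{\text{mdim}}_{\text{H}}(\MM^{\KK},\mathbf{d},\sigma)$ and $\overline{\text{mdim}}_{\text{H}}(\MM^{\KK},\mathbf{d},\sigma)$, computed through $\text{dim}_{\text{H}}(\MM^{\KK},\mathbf{d}_{n},\varepsilon)$ or equivalently through $\text{dim}^{\star}_{\text{H}}(\MM^{\KK},\mathbf{d}_{n},\varepsilon)$.

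One inequality is immediate. A finite box‑cylinder cover of $\MM^{\KK}$ is in particular a countable cover of $\MM^{\KK}$ by subsets of the same diameters, so $\text{H}^{s}_{\varepsilon}(\MM^{\KK},\mathbf{d}_{n})\le\text{P}^{s}_{\varepsilon}(\MM^{\KK},\mathbf{d}_{n})$ for all $s\ge0$, $\varepsilon>0$, $n$; hence $\text{dim}_{\text{H}}(\MM^{\KK},\mathbf{d}_{n},\varepsilon)\le\text{dim}^{\bullet}_{\text{H}}(\MM,\mathbf{d}_{n},\varepsilon)$, and applying $\tfrac1n$, then $\liminf_{n}$ (respectively $\limsup_{n}$), then $\lim_{\varepsilon\to0}$ gives ``$\le$'' in \eqref{qwwsxccc} (respectively \eqref{qwsxccc}).

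For the reverse inequality I would use two elementary features of $\mathbf{d}_{n}$, where $D:=\text{diam}_{d}(\MM)$: (a) if $\bar x,\bar y\in\MM^{\KK}$ agree on the finite window $W_{n,N}$ of indices relevant to $\mathbf{d}_{n}$ — namely $\{0,\dots,n+N-1\}$ when $\KK=\NN$, and the symmetric window $\{-N,\dots,n+N-1\}$ when $\KK=\ZZ$ — then $\mathbf{d}_{n}(\bar x,\bar y)\le 2D\,2^{-N}$; and (b) for a $\mathbf{d}_{n}$‑ball $B$ of diameter $\rho$, the $\ell$‑th coordinate projection of $B$ has $d$‑diameter at most $2^{\max\{0,\ell-n+1\}}\rho$. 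Given $\varepsilon>0$ I would take a finite cover $\{B_{i}\}_{i=1}^{m}$ of $\MM^{\KK}$ by $\mathbf{d}_{n}$‑balls of diameter $<\varepsilon'$, essentially optimal for $\text{B}^{s}_{\varepsilon'}(\MM^{\KK},\mathbf{d}_{n})$ (with $\varepsilon'=\varepsilon'(\varepsilon)\to0$ chosen at the end), and replace each $B_{i}$, writing $\rho_{i}:=\text{diam}_{\mathbf{d}_{n}}(B_{i})$ and $N_{i}:=\lceil\log_{2}(D/\rho_{i})\rceil$, by the open box‑cylinder $C_{i}\supseteq B_{i}$ of depth $n+N_{i}$ whose $\ell$‑th factor is the open $\rho_{i}$‑neighbourhood (in $d$) of the $\ell$‑th projection of $B_{i}$, and $\MM$ afterwards — this is open because projections of open balls are open. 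Then $\{C_{i}\}$ covers $\MM^{\KK}$, and a direct computation with the weights $2^{-|j|}$ (splitting $\mathbf{d}(\sigma^{k}\cdot,\sigma^{k}\cdot)$ into the contributions of the coordinates $<n$, of the $\sim N_{i}$ intermediate ones, and of the tail) gives $\text{diam}_{\mathbf{d}_{n}}(C_{i})\le\rho_{i}\,\kappa(\rho_{i})$ with $\kappa(\rho)=O\!\big(1+\log_{2}(D/\rho)\big)$. Choosing $\varepsilon'$ so that $\varepsilon'\kappa(\varepsilon')<\varepsilon$ makes $\{C_{i}\}$, after padding all members to a common depth, an element of $\mathcal{T}$ admissible for $\text{P}^{s}_{\varepsilon}$, whence $\text{P}^{s}_{\varepsilon}(\MM^{\KK},\mathbf{d}_{n})\le\sum_{i}(\rho_{i}\kappa(\rho_{i}))^{s}$. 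Bounding this sum, for $s$ in the relevant range $s\le\overline{\text{dim}}_{\text{B}}(\MM^{\KK},\mathbf{d}_{n})$ where essentially equal‑sized ball covers are optimal for $\text{B}^{s}_{\varepsilon'}$, by $[C\log(1/\varepsilon')]^{s}\,\text{B}^{s}_{\varepsilon'}(\MM^{\KK},\mathbf{d}_{n})$ and translating into dimensions gives $\text{dim}^{\bullet}_{\text{H}}(\MM,\mathbf{d}_{n},\varepsilon)\le\text{dim}^{\star}_{\text{H}}(\MM^{\KK},\mathbf{d}_{n},\varepsilon',\Theta_{n,\varepsilon})$ for a threshold with $|\log\Theta_{n,\varepsilon}|=O(n\log\log(1/\varepsilon'))$, which by the preliminary reduction equals $\text{dim}^{\star}_{\text{H}}(\MM^{\KK},\mathbf{d}_{n},\varepsilon')+O\!\big(n\log\log(1/\varepsilon')/|\log\varepsilon'|\big)$. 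Dividing by $n$, letting $n\to\infty$ and then $\varepsilon\to0$ (so $\varepsilon'\to0$ and $\log\log(1/\varepsilon')/|\log\varepsilon'|\to0$) yields ``$\ge$'' in \eqref{qwwsxccc} and \eqref{qwsxccc}; combined with the previous paragraph this proves the lemma.

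I expect the last step to be the main obstacle. Because $\mathbf{d}$ is of weighted‑$\ell^{1}$ type, the smallest box‑cylinder enclosing a $\mathbf{d}_{n}$‑ball of diameter $\rho$ is ``fat'' — its diameter is of order $\rho\log(1/\rho)$ rather than $O(\rho)$ — so one must carefully track a logarithmic loss both in the diameters of the $C_{i}$ and in the threshold $\Theta_{n,\varepsilon}$, deal with the possibility of very small balls in the chosen cover (by reducing to box‑counting‑type covers in the range of exponents that actually matters), and verify that the loss is killed by the normalisation $\tfrac1n$ together with the limits $n\to\infty$ and $\varepsilon\to0$; the $\Theta$‑independence established at the start is exactly what legitimises this bookkeeping. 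The case $\KK=\ZZ$ needs only the obvious symmetric modification of the window and is otherwise identical.
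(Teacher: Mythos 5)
Your proposal is correct in outline and in fact supplies considerably more than the paper's own argument, which is only a two-line sketch. You have the two directions sorted the right way round: since $\mathcal{T}$ is a \emph{restricted} class of covers, $\text{H}^{s}_{\varepsilon}\leq\text{P}^{s}_{\varepsilon}$ and hence $\text{dim}_{\text{H}}\leq\text{dim}^{\bullet}_{\text{H}}$ is the trivial inequality, so the easy direction is $\text{mdim}_{\text{H}}\leq$ (box quantity). The paper, read literally, asserts the opposite inequality as ``clear'' and then proves $\text{B}^{s}_{\varepsilon}\leq 2^{s}\text{P}^{s}_{\varepsilon/2}$ (enclosing a box-cylinder in a ball), which again only yields $\text{mdim}_{\text{H}}\leq$ (box quantity); the substantive direction --- that box-cylinder covers are essentially as efficient as arbitrary ones --- is not actually argued there. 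Your construction, enclosing each $\mathbf{d}_{n}$-ball of diameter $\rho$ in a box-cylinder of depth $n+\lceil\log_{2}(D/\rho)\rceil$ and $\mathbf{d}_{n}$-diameter $O(\rho\log(D/\rho))$, and then absorbing the logarithmic loss through the threshold $\Theta$ (your quantitative $\Theta$-independence bound $|\log(\Theta_{1}/\Theta_{2})|/|\log\varepsilon|$, uniform in $n$, is exactly the right tool for this) and the normalisation $1/n$, is precisely the missing content; I checked your diameter estimates and they are correct.

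The one step you should firm up is the passage from $\sum_{i}(\rho_{i}\kappa(\rho_{i}))^{s}$ to $[C\log(1/\varepsilon')]^{s}\,\text{B}^{s}_{\varepsilon'}$. Since $\kappa(\rho)$ grows as $\rho\to 0$, you cannot uniformly replace $\kappa(\rho_{i})$ by $\kappa(\varepsilon')$ when the near-optimal cover contains balls much smaller than $\varepsilon'$, and the appeal to ``equal-sized covers being optimal in the relevant range of $s$'' is not justified as stated. A clean way around this is to use $\rho\,\kappa(\rho)\leq C_{\delta}\,\rho^{1-\delta}$ for any fixed $\delta\in(0,1)$: starting from a ball cover near-optimal for the exponent $(1-\delta)s$ one gets $\text{P}^{s}_{\varepsilon}\leq C_{\delta}^{s}\,\text{B}^{(1-\delta)s}_{\varepsilon'}$ (up to an additive $\eta$), hence $(1-\delta)\,\text{dim}^{\bullet}_{\text{H}}(\MM^{\KK},\mathbf{d}_{n},\varepsilon,\Theta)\leq\text{dim}^{\star}_{\text{H}}(\MM^{\KK},\mathbf{d}_{n},\varepsilon')+O\big((s\log C_{\delta}+|\log\Theta|)/|\log\varepsilon'|\big)$ with $s=O(n)$; dividing by $n$, letting $n\to\infty$, then $\varepsilon\to 0$, and finally $\delta\to 0$ finishes the hard direction without any discussion of which covers are optimal. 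With that repair your argument is complete, and it is a genuinely more careful route than the one the paper records.
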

\begin{proof}
    Clearly we have that $$\underline{\text{mdim}}_{\text{H}}(\MM^{\mathbb{K}} ,\text{\textbf{d}},\sigma) \geq \lim_{\varepsilon\rightarrow 0} \left(\liminf_{n\rightarrow \infty}\frac{1}{n} \text{dim}^{\bullet}_{\text{H}}(\MM,\text{\textbf{d}}_{n},\varepsilon,\Theta)\right)$$ and $$\overline{\text{mdim}}_{\text{H}}(\MM^{\mathbb{K}} ,\text{\textbf{d}},\sigma) \geq \lim_{\varepsilon\rightarrow 0} \left(\limsup_{n\rightarrow \infty}\frac{1}{n} \text{dim}^{\bullet}_{\text{H}}(\MM,\text{\textbf{d}}_{n},\varepsilon,\Theta)\right).$$ 

Next, we can prove that \begin{equation*}     \text{B}_{\varepsilon}^{s} (\MM^{\mathbb{K}},\text{\textbf{d}}_{n})\leq  2^{s}\text{P}_{{\varepsilon}/{2}}^{s} (\MM^{\mathbb{K}},\text{\textbf{d}}_{n}) \end{equation*} (see \cite{Falconer}, Section 2.4). From this fact (see \eqref{xfddefwfwfzx}), we can show that there exists $\Theta >0$ such that    
  $$   {\text{dim}}^{\star}_{\text{H}}(\MM^{\mathbb{K}},\text{\textbf{d}}_{n},\varepsilon)\leq  \dim_{\text{H}}^{\bullet}(\MM^{\mathbb{K}},\text{\textbf{d}}_{n},\varepsilon/2,\Theta) .$$  

From the above results, we have that  \eqref{qwwsxccc} and \eqref{qwsxccc} are valid for any  $\Theta >0$. 
\end{proof}

\begin{theorem}\label{theo1} Let $\sigma:\MM^{\mathbb{K}} \rightarrow \MM^{\mathbb{K}}  $ be the left shift map, with $\mathbb{K}=\mathbb{N}$ or $\mathbb{Z}$.  For any metric $d$ on $\MM$, we have that 
$$ \emph{dim}_{\emph{H}}(\MM,d)\leq \underline{\emph{mdim}}_{\emph{H}}(\MM^{\mathbb{K}} ,\emph{\textbf{d}},\sigma) \leq  \overline{\emph{mdim}}_{\emph{H}}(\MM^{\mathbb{K}} ,\emph{\textbf{d}},\sigma) \leq \underline{\emph{dim}}_{\emph{B}}(\MM,d).$$
\end{theorem}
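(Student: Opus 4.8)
The plan is to prove the three inequalities separately. The middle inequality $\underline{\text{mdim}}_{\text{H}} \leq \overline{\text{mdim}}_{\text{H}}$ is immediate since $\liminf \leq \limsup$. The rightmost inequality $\overline{\text{mdim}}_{\text{H}}(\MM^{\mathbb K},\textbf{d},\sigma) \leq \underline{\text{dim}}_{\text{B}}(\MM,d)$ should follow from the general chain $\overline{\text{mdim}}_{\text{H}}(\MM^{\mathbb K},\textbf{d},\sigma) \leq \underline{\text{mdim}}_{\text{M}}(\MM^{\mathbb K},\textbf{d},\sigma)$ in Remark~\ref{obsscc}, combined with the cited result from \cite{VV} that $\underline{\text{mdim}}_{\text{M}}(\MM^{\mathbb K},\textbf{d},\sigma) = \underline{\text{dim}}_{\text{B}}(\MM,d)$. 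So the real content is the leftmost inequality $\text{dim}_{\text{H}}(\MM,d) \leq \underline{\text{mdim}}_{\text{H}}(\MM^{\mathbb K},\textbf{d},\sigma)$.

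For the leftmost inequality I would use Lemma~\ref{czzzvrv}, which lets me compute $\underline{\text{mdim}}_{\text{H}}(\MM^{\mathbb K},\textbf{d},\sigma)$ using only the restricted class $\mathcal T$ of cylinder-type open covers and the quantity $\text{dim}^{\bullet}_{\text{H}}(\MM,\textbf{d}_n,\varepsilon,\Theta)$. The key geometric observation is that for $\bar x,\bar y$ lying in a common cylinder $C = A_1 \times \dots \times A_\beta \times \MM \times \MM \times \cdots$, the coordinates $x_k,y_k$ for $k$ outside $\{1,\dots,\beta\}$ are unconstrained, so the $\textbf{d}_n$-diameter of $C$ is controlled from below by the $d_n$-behavior of the first $\beta$ coordinates only if $\beta$ is large; more precisely, I want to show that a cylinder cover of $\MM^{\mathbb K}$ with small $\textbf{d}_n$-diameter forces, for each fixed coordinate (say coordinate $1$), a cover of $\MM$ by the sets $A_{i,1}$ with $d_n$-diameter (or just $d$-diameter, after accounting for the shift) comparably small. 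Summing $(\text{diam})^s$ over the cylinder cover dominates the corresponding sum over the induced cover of the single factor $\MM$ with metric $d$, up to constants coming from the tail $\sum_{|j|>\beta} 2^{-|j|}$ and from the fact that $\textbf{d}(\bar x,\bar y) \geq \tfrac12 d(x_1,y_1)$ (or a similar normalization). This yields $\text{dim}^{\bullet}_{\text{H}}(\MM,\textbf{d}_n,\varepsilon,\Theta) \geq \text{dim}_{\text{H}}(\MM,d,\varepsilon',\Theta')$ for suitable $\varepsilon',\Theta'$ depending on $\varepsilon,\Theta$ but not on $n$; dividing by $n$ and letting $n\to\infty$ then $\varepsilon\to 0$, and invoking Lemma~\ref{mg4g4l} to absorb the $\Theta$-dependence, gives $\underline{\text{mdim}}_{\text{H}}(\MM^{\mathbb K},\textbf{d},\sigma) \geq \text{dim}_{\text{H}}(\MM,d)$.

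The main obstacle I anticipate is making the comparison between the $\textbf{d}_n$-diameter of a cylinder in $\MM^{\mathbb K}$ and the $d$-diameter of its projection to a single factor fully rigorous and uniform in $n$: because $\textbf{d}_n(\bar x,\bar y) = \max_{0\le \ell<n}\textbf{d}(\sigma^\ell \bar x,\sigma^\ell \bar y)$ mixes together many coordinates with geometrically decaying weights, one must check that the projection argument does not lose a factor that grows with $n$. I expect the resolution is to project onto a single well-chosen coordinate (exploiting that each appears with a definite positive weight at some point in the orbit segment) and to note that the restricted covers in $\mathcal T$ are genuinely product sets, so their diameters factor cleanly. A secondary technical point is the $\Theta$ bookkeeping: the value of $\Theta$ that appears after the constant loss need not be $1$, but Lemmas~\ref{mg4g4l} and~\ref{czzzvrv} guarantee the limit is independent of $\Theta>0$, so this causes no harm.
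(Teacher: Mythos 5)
Your treatment of the second and third inequalities is correct and coincides with the paper's (the third follows from Remark~\ref{obsscc} together with the result of \cite{VV}). The gap is in your plan for the first inequality. The bound you aim for, $\text{dim}^{\bullet}_{\text{H}}(\MM^{\mathbb{K}},\textbf{d}_{n},\varepsilon,\Theta)\geq \dim_{\text{H}}(\MM,d,\varepsilon',\Theta')$ with the right-hand side independent of $n$, is useless here: the mean Hausdorff dimension divides by $n$ before taking $n\to\infty$, so projecting onto a single well-chosen coordinate only yields $\underline{\text{mdim}}_{\text{H}}(\MM^{\mathbb{K}},\textbf{d},\sigma)\geq \lim_{n} \frac{1}{n}\dim_{\text{H}}(\MM,d,\varepsilon',\Theta')=0$. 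What is actually needed is a lower bound of order $n\,\dim_{\text{H}}(\MM,d,\cdot)$, i.e.\ one must exploit that under $\textbf{d}_{n}$ roughly $n$ coordinates are each resolved at scale comparable to $\varepsilon$, so that $(\MM^{\mathbb{K}},\textbf{d}_{n})$ behaves like an $n$-fold product. But a pure covering/projection argument cannot deliver this either: lower-bounding Hausdorff-type content of a product by the product of factor contents is exactly the superadditivity $\dim_{\text{H}}(X\times Y)\geq \dim_{\text{H}}(X)+\dim_{\text{H}}(Y)$, which is known to require a measure-theoretic (mass distribution) argument — covers of a product need not be products of covers, and the sum $\sum_i\prod_j(\text{diam}\,A_{i,j})^{s}$ cannot be bounded below by combinatorics alone.

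The paper closes this gap with a Frostman-type measure: by Lemma~\ref{frostman} there are $c_k\to 1$, $\delta_k\to 0$ and Borel probability measures $\mu$ on $(\MM,d)$ with $\mu(E)\leq(\text{diam}_d E)^{c_k\dim_{\text{H}}(\MM,d,\delta_k)}$ for small $E$; taking the product measure $\tilde\mu=\mu^{\mathbb{N}}$ on $\MM^{\mathbb{N}}$ and a cylinder $C_i=A_{i,1}\times\cdots\times A_{i,\beta}\times\MM\times\cdots$ of small $\textbf{d}_{n}$-diameter, one gets $\tilde\mu(C_i)\leq\prod_{j=1}^{n}(\text{diam}_d A_{i,j})^{c_k\dim_{\text{H}}(\MM,d,\delta_k)}\leq(\text{diam}_{\textbf{d}_n}C_i)^{c_k\, n\,\dim_{\text{H}}(\MM,d,\delta_k)}$, where the crucial factor $n$ comes from the first $n$ coordinates. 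The mass-distribution estimate (as in \eqref{vevezwwww}, Lemma~\ref{mass}) then gives $\frac{1}{n}\dim^{\bullet}_{\text{H}}(\MM^{\mathbb{N}},\textbf{d}_{n},\delta_k/(6\cdot 2^{\beta}))\geq c_k\dim_{\text{H}}(\MM,d,\delta_k)$, and Lemma~\ref{czzzvrv} (which you correctly identified as the device for restricting to cylinder covers, and which also absorbs the $\Theta$-bookkeeping) finishes the proof. So your outline of the reduction is on target, but the heart of the argument must be a product Frostman measure, not a projection of covers.
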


\begin{proof} The second inequality is immediate from the definition. Next, in \cite{VV} it is proved that $\underline{\text{mdim}}_{\text{M}}(\MM^{\mathbb{K}} ,\text{\textbf{d}},\sigma) =\underline{\text{dim}}_{\text{B}}(\MM,d)$. Hence, the third inequality from the theorem follows from the fact that $ \overline{\text{mdim}}_{\text{H}}(\MM^{\mathbb{K}} ,{\textbf{d}},\sigma) \leq\underline{\text{mdim}}_{\text{M}}(\MM^{\mathbb{K}} ,\text{\textbf{d}},\sigma) $ (see Remark \ref{obsscc}).  
 
  We will prove the first inequality for $\mathbb{K}=\mathbb{N}$ (the case $\mathbb{K}=\mathbb{Z}$ can be proved analogously).  For each $k\geq 1$, take  $ c_{k} \in (0,1)$ such that $c_{k}\rightarrow 1$ as $k\rightarrow \infty$. It follows from  Lemma \ref{frostman} that, for each $k\geq 1$,    there exists a $\delta_k = \delta_k(c_k) \in (0,1)$, such that $\delta_{k}\rightarrow 0$ as $k\rightarrow \infty$, for which there is a Borel probability measure $\mu$ on $(\MM,d)$ such that
$$\mu(E) \leq (\text{diam}_d (E))^{c_k\text{dim}_{\text{H}}(\MM,d,\delta_k)}$$
for all $E \subset \MM$ with $\text{diam}_d (E) < \frac{\delta_k}{6}.$

Next, we will consider  the Borel probability measure $\tilde{\mu} = {\mu}^{\NN}$   on $\MM^{\NN}$. Let  $  \{C_{i}\}_{i=1}^{m}$ be a finite  open cover  of $\MM^{\mathbb{N}}$ with the form $C_{i} = A_{i,1} \times A_{i,2} \times \dots \times A_{i,\beta} \times \MM \times \MM \times \cdots,$ where $A_{i,j}  $ is an open subset of $\MM$, for all $1 \leq j \leq \beta.$ We will suppose that  $\text{diam}_{\textbf{d}_{n}} (C_{i})< \frac{\delta_k}{6(2^{\beta})}$, for all $i=1,\dots,\beta$. In this case, we must have that $\text{diam}_{{d}} (A_{i,j})< \frac{\delta_k}{6}$, for $i=1,\dots, m$, $j=1,\dots,\beta$  and $\beta \gg n$. 
 Therefore, for all $C_{i}$,  we have that 
\begin{align*}
    \tilde{\mu}(C_i) & = \mu(A_{i,1}) \mu(A_{i,2}) \cdots \mu(A_{i,\beta}) \leq (\text{diam}_d (A_{i,1}))^{c_k\,\text{dim}_{\text{H}}(\MM,d,\delta_k)} \cdots (\text{diam}_d (A_{i,\beta}))^{c_k\,\text{dim}_{\text{H}}(\MM,d,\delta_k)}\\
    & \leq  (\text{diam}_d (A_{i,1}))^{c_k\,\text{dim}_{\text{H}}(\MM,d,\delta_k)} \cdots (\text{diam}_d (A_{i,n}))^{c_k\,\text{dim}_{\text{H}}(\MM,d,\delta_k)}\\
    &\leq (\text{diam}_{\textbf{d}_n} (C_i))^{c_k n\, \text{dim}_{\text{H}}(\MM,d,\delta_k)}.
\end{align*}
From this fact, we can to prove   that
$$\frac{1}{n}\text{dim}_{\text{H}}^{\bullet}(\MM^{\NN},\textbf{d}_n, {\delta_k}/{6(2^{\beta})}) \geq c_k \text{dim}_{\text{H}}(\MM,d,\delta_k)$$ (see \eqref{vevezwwww}), 
where $c_k \to 1$ and $\delta_k \to 0$ as $k \to \infty$. The theorem follows from Lemma   \ref{czzzvrv}. \end{proof}

For mean topological dimension   we have  that $$   {\text{mdim}} (N^{\mathbb{Z}} ,\sigma)\leq \text{dim}(N)   ,$$ where $\text{dim}(N)$ is the topological dimension of $N$   (see \cite{lind}, Theorem 3.1). This inequality can be strict (see  \cite{Tsukamoto2}). 

\medskip 

\noindent \textbf{Conjecture.} We conjecture that for any compact metric space $\MM$ we have that $$  \underline{\text{mdim}}_{\text{H}}(\MM^{\mathbb{K}} ,\text{\textbf{d}},\sigma) =\text{dim}_{\text{H}}(\MM,d).$$

\medskip 
 
Next, for any continuous map $f \colon \MM \to \MM$, we have
$$\underline{\text{mdim}}_{\text{M}}(\MM,d,f) \leq \overline{\text{mdim}}_{\text{M}}(\MM,d,f) \leq \underline{\text{dim}}_{\text{B}}(\MM,d)$$ (see \cite{VV}). 
Consequently, from Remark \ref{obsscc}, we have $$\underline{\text{mdim}}_{\text{H}}(\MM,d,f) \leq \overline{\text{mdim}}_{\text{H}}(\MM,d,f) \leq \underline{\text{dim}}_{\text{B}}(\MM,d).$$

The next corollary follows from Theorem \ref{theo1}.

\begin{corollary}  Suppose that $\emph{dim}_{\emph{H}}(\MM,d)= \underline{\emph{dim}}_{\emph{B}}(\MM,d)$, then: \begin{itemize} \item $  \underline{\emph{mdim}}_{\emph{H}}(\MM^{\mathbb{K}} ,\emph{\textbf{d}},\sigma) =  \overline{\emph{mdim}}_{\emph{H}}(\MM^{\mathbb{K}} ,\emph{\textbf{d}},\sigma) ={\emph{dim}_{\emph{H}}}(\MM,d)$.
\item For any $f\in C^{0}(\MM)$ we have 
$ \underline{\emph{mdim}}_{\emph{H}}(\MM,d,f) \leq \overline{\emph{mdim}}_{\emph{H}}(\MM,d,f) \leq  {\emph{dim}_{\emph{H}}}(\MM,d).$\end{itemize}
\end{corollary}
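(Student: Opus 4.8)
The plan is to derive the corollary directly from Theorem~\ref{theo1} together with the chain of inequalities recorded just before the statement. First I would recall the hypothesis $\text{dim}_{\text{H}}(\MM,d) = \underline{\text{dim}}_{\text{B}}(\MM,d)$, and then invoke Theorem~\ref{theo1}, which gives
$$
\text{dim}_{\text{H}}(\MM,d) \leq \underline{\text{mdim}}_{\text{H}}(\MM^{\mathbb{K}},\text{\textbf{d}},\sigma) \leq \overline{\text{mdim}}_{\text{H}}(\MM^{\mathbb{K}},\text{\textbf{d}},\sigma) \leq \underline{\text{dim}}_{\text{B}}(\MM,d).
$$
Since the leftmost and rightmost quantities coincide by hypothesis, the entire chain collapses to a string of equalities, and we obtain
$$
\underline{\text{mdim}}_{\text{H}}(\MM^{\mathbb{K}},\text{\textbf{d}},\sigma) = \overline{\text{mdim}}_{\text{H}}(\MM^{\mathbb{K}},\text{\textbf{d}},\sigma) = \text{dim}_{\text{H}}(\MM,d),
$$
which is the first bullet point.

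For the second bullet, I would combine the displayed inequality
$$
\underline{\text{mdim}}_{\text{H}}(\MM,d,f) \leq \overline{\text{mdim}}_{\text{H}}(\MM,d,f) \leq \underline{\text{dim}}_{\text{B}}(\MM,d),
$$
which is stated in the excerpt (a consequence of the analogous bound for $\text{mdim}_{\text{M}}$ from \cite{VV} and Remark~\ref{obsscc}), with the hypothesis $\underline{\text{dim}}_{\text{B}}(\MM,d) = \text{dim}_{\text{H}}(\MM,d)$. Substituting the equality into the right-hand side immediately yields
$$
\underline{\text{mdim}}_{\text{H}}(\MM,d,f) \leq \overline{\text{mdim}}_{\text{H}}(\MM,d,f) \leq \text{dim}_{\text{H}}(\MM,d)
$$
for every $f \in C^{0}(\MM)$.

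I do not anticipate a genuine obstacle here, since both parts are purely formal manipulations of inequalities already established; the only care needed is to make sure that the bound $\overline{\text{mdim}}_{\text{H}}(\MM,d,f) \leq \underline{\text{dim}}_{\text{B}}(\MM,d)$ invoked in the second bullet is indeed available without extra hypotheses, which it is — it is recorded in the excerpt right before the corollary. Thus the proof is essentially a two-line deduction in each case.
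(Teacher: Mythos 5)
Your proposal is correct and matches the paper's intended argument: the first bullet collapses the chain of Theorem \ref{theo1} under the hypothesis $\text{dim}_{\text{H}}(\MM,d)=\underline{\text{dim}}_{\text{B}}(\MM,d)$, and the second bullet substitutes that hypothesis into the bound $\underline{\text{mdim}}_{\text{H}}(\MM,d,f)\leq\overline{\text{mdim}}_{\text{H}}(\MM,d,f)\leq\underline{\text{dim}}_{\text{B}}(\MM,d)$ recorded just before the corollary. The paper gives no separate proof precisely because the deduction is this immediate, so your two-line argument is essentially the same as the paper's.
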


   \section{Some examples changing the metric}\label{someexamples}

In this section, we will calculate the metric mean dimension of several continuous maps changing the metric on  $\MM$.  For any homeomorphism $h \colon \MM \to \MM$, take the metric $d_{h}\in\MM(\tau)$ defined by \begin{equation}\label{fedddzff} d_h(x,y) = d(h(x),h(y)) \quad\text{for all }x,y \in \MM.\end{equation}  

Next, take  $g:\MM\rightarrow \MM$ given by $g(x) = h \circ f \circ h^{-1}(x)$, for all $x \in \MM$, where $f:\MM\rightarrow \MM$ is a fixed continuous map.  We have that the map $h \colon (\MM,d_h) \to (\MM,d)$ is an isometry. Therefore,  for any homeomorphism $h:\MM\rightarrow \MM$ we have
$$\text{mdim}_{\text{M}}(\MM,d_h,f) = \text{mdim}_{\text{M}}(\MM,d, h \circ f \circ h^{-1}) = \text{mdim}_{\text{M}}(\MM,d,g)$$
and
$$\text{mdim}_{\text{H}}(\MM,d_h, f) = \text{mdim}_{\text{H}}(\MM ,d,h \circ f \circ h^{-1}) = \text{mdim}_{\text{H}}(\MM,d,g).$$
Consequently,  
$$ \text{mdim}_{\text{M}}(\MM,d_h,f) \in [0, \text{dim}_{\text{B}}(\MM,d) ]\quad\text{ and }\quad  \text{mdim}_{\text{H}}(\MM,d_h, f)  \in [0,  \text{dim}_{\text{B}}(\MM,d)].   $$

Since the metric mean dimension depends on the metric, we can have two topologically conjugate dynamical systems with different metric mean dimension, as we will see in the next example (see   \cite{Muentes}, \cite{Kloeckner} and \cite{VV}).

 \begin{example}\label{example11} For any closed interval $J$, let $T_{J}: J \rightarrow [0,1]$ be the unique increasing affine map from $J$ onto $[0,1]$. Set $g(x) = |1-|3x-1||$ for any $x \in [0,1]$. Fix $r\in(0,\infty)$ and $s\in \mathbb{N}$.

For any $n\geq 1$, set $a_{0}=0$,  $a_{n}= \sum_{i=0}^{n-1}\frac{A}{3^{ir}}$ and take $I_{n}=[a_{n-1},a_{n}]$, where $A=\frac{1}{\sum_{i=0}^{\infty}\frac{1}{3^{ir}}}=\frac{3^{r}-1}{3^{r}}$. Next, take $\phi_{s,r}\in C^{0}([0,1])$, given by   $\phi_{s,r}|_{I_{n}}= T_{I_n}^{-1}\circ g^{s n}\circ T_{I_n}$ for any $n\geq 1$.  We have (see  \cite[Example 2.5]{Muentes3}, \cite[Example 3.1]{Muentes} and \cite[Lemma 6]{VV})  $${{\text{mdim}}_{\text{H}}}([0,1] ,|\cdot |,\phi_{s,r})=  {{\text{mdim}}_{\text{M}}}([0,1] ,|\cdot |,\phi_{s,r}) =\frac{s}{r+s}.$$  
 For a fixed $s$ and any $r_{1}, r_{2}\in (0,\infty)$, we have  $\phi_{s,r_{1}}$ and $\phi_{s,r_{2}}$ are topologically conjugate by a conjugacy $h_{1,2}:[0,1]\rightarrow [0,1]$ (see \cite{Muentes}, Remark 3.2), such that $$  \phi_{s,r_{1}}=h_{1,2}  \circ\phi_{s,r_{2}} \circ h_{1,2} ^{-1}.$$  Hence, 
 $$\text{mdim}_{\text{M}}([0,1] ,d_{h_{1,2}},\phi_{s,r_{2}})    =\frac{s}{r_{1}+s}\neq \frac{s}{r_{2}+s}=\text{mdim}_{\text{M}}([0,1] ,|\cdot|,\phi_{s,r_{2}}) ,$$  where $d_{h_{1,2}}$ is defined in \ref{fedddzff}. The same fact holds for the mean Hausdorff dimension.
 
 Next, for $n\geq 1$, set   $J_{n}=[2^{-n^{n}},2^{-{n}^{n}+1}]$.
  Take $\varphi_{s}\in C^{0}([0,1])$, given by   $\varphi_{s}|_{J_{n}}= T^{-1}_{J_{n}}\circ g^{s n}\circ T_{J_n}$ for any $n\geq 1$. We can prove   that  $$ \text{mdim}_{\text{H}}([0,1],|\cdot |, \varphi_{s})=\text{mdim}_{\text{M}}([0,1],|\cdot |, \varphi_{s})=0$$  (see  \cite[Theorem 3.3]{Muentes}). 
 Note that, for any $s\in \mathbb{N}$ and $r\in (0,\infty)$,  $\varphi_{s}$ and $\phi_{r,s}$ are topologically conjugate  by a topological conjugacy $h:[0,1]\rightarrow [0,1]$ such that $ \varphi_{s}= h\circ \phi_{s,r}\circ h^{-1}$. Hence,  $$ \text{mdim}_{\text{H}}([0,1] ,d_{h},\phi_{s,r})= \text{mdim}_{\text{M}}([0,1] ,d_{h},\phi_{s,r})= \text{mdim}_{\text{M}}([0,1] ,|\cdot |,\varphi_{s})=0.   $$
  
Finally, let $b_{0}=0$ and     $b_{n}=\sum_{i=1}^{n}\frac{6}{\pi^{2}i^{2}}$ for any $n\geq 1.$  Take $K_{n}=[b_{n-1},b_{n}]$. Let $\psi_{s}\in C^{0}([0,1])$ be defined by  $\psi_{s} |_{K_{n}} = T_{K_n}^{-1}\circ g^{sn}\circ T_{K_n}$ for any $n\geq 1$. 
 We have that (see  \cite[Example 3.5]{Muentes} and \cite[Example 2.6]{Muentes3})  $$ \text{mdim}_{\text{H}}([0,1],|\cdot |, \psi_{s})=\text{mdim}_{\text{M}}([0,1],|\cdot |, \psi_{s})=1.$$   Note that, for any $s\in \mathbb{N}$ and $r\in (0,\infty)$,  $\psi_{s}$ and $\phi_{r,s}$ are topologically conjugate  by a topological conjugacy $j:[0,1]\rightarrow [0,1]$ such that $ \psi_{s}= j\circ \phi_{s,r}\circ j^{-1}$. Hence,  $$ \text{mdim}_{\text{H}}([0,1] ,d_{j},\phi_{s,r})=  \text{mdim}_{\text{M}}([0,1] ,d_{j},\phi_{s,r})= \text{mdim}_{\text{M}}([0,1] ,|\cdot |,\psi_{s})=1.   $$
  \end{example}

\begin{remark}\label{wfwfaffa} Let  $\mathcal{M}$ be the subset of $C^{0}([0,1])$ consisting of each map $f$ such that for some closed subinterval $K\subseteq [0,1]$, $f|_{K}:K\rightarrow K$ is such that $f=T_{K}^{-1}\circ \psi\circ T_{K}$, where $\psi$ is one of the maps defined in Example \ref{example11} (that is, $\phi_{s,r}$, or $\varphi_{s}$, or $\psi_{s}$),  and  $f|_{K^{c}}:K^{c}\rightarrow K^{c}$ is a piecewise $C^{1}$-map.   $\mathcal{M}$  is dense in $C^{0}([0,1])$ (see \cite{Carvalho} and \cite{Muentes}). Note that for each $f \in \mathcal{M}$ and $a \in [0,1]$, based on Example \ref{example11}, it is possible to construct an explicit metric $d_{a}$ on $[0,1]$ such that $$\text{mdim}_{\text{H}}([0,1],d_{a},f)=\text{mdim}_{\text{M}}([0,1],d_{a},f)=a.$$\end{remark}
  
 \begin{remark}\label{observacr}
 In Example \ref{example11}, note that $\phi_{s,r}=\phi_{1,r}^{s}$ for any $s\in\mathbb{N}$ and $r\in(0,\infty)$. Hence, 
 $$  \text{mdim}_{\text{M}}([0,1] ,|\cdot|,\phi^{s}_{1,r})=\frac{s\, \text{mdim}_{\text{M}}([0,1] ,|\cdot|,\phi_{1,r})}{1+(s-1)\text{mdim}_{\text{M}}([0,1] ,|\cdot|,\phi_{1,r})}.  $$ The same fact holds for the mean Hausdorff dimension. 
 \end{remark}

     Let  \begin{equation*}\label{nevsuidnf} \textit{\textbf{C}}=\{(x_{1},x_{2},\dots ): x_n=0,2\text{  for }n\in\mathbb{N}\}=\{0,2\}^{\mathbb{N}}\end{equation*}
be the Cantor set.    For a fixed $\alpha \in (1,\infty)$, consider the metric    
\begin{equation*}\label{gsgdf} \textbf{d}_{\alpha}(\bar{x},\bar{y})=\sum_{n\in\mathbb{N}} \alpha^{-n}|x_{n}-y_{n}|,\quad\text{for any }\bar{x}=(x_{n})_{n\in\mathbb{N}},\bar{y}=(y_{n})_{n\in\mathbb{N}}\in \textit{\textbf{C}} . \end{equation*}   We have that  $\text{dim}_{\text{B}}(\textit{\textbf{C}},\textbf{d}_{\alpha})= \frac{\log 2}{\log \alpha}$ (see \cite{Furstenberg}, Proposition III.1 or \cite{Falconer}, page  31). Therefore,  for any $\varphi\in C^{0}(\textit{\textbf{C}}), $ we have from Remark \ref{obsscc} that  \begin{equation}\label{iuhugltr}  \underline{\text{mdim}}_\text{M}({\textit{\textbf{C}}} ,\textbf{d}_{\alpha},\varphi)\leq \overline{\text{mdim}}_\text{M}({\textit{\textbf{C}}} ,\textbf{d}_{\alpha},\varphi)\leq  \text{dim}_{\text{B}}(\textit{\textbf{C}},\textbf{d}_{\alpha})=\frac{\log 2}{\log \alpha}.\end{equation}  
For any $k\geq 1$,    set  \begin{align*}\textit{\textbf{C}}_{k}=\{(x_{n})_{n=1}^{\infty}: x_{i}=0 \text{ for }i\leq k-1, x_{k}=2\text{ and } x_{n}\in \{0,2\}\text{ for }n\geq k+1\}.\end{align*} Note that if $k\neq s$, then $ \textit{\textbf{C}}_{k}\cap \textit{\textbf{C}}_{s}=\emptyset $ and ${\textit{\textbf{C}}\setminus \cup_{k=1}^{\infty} \textit{\textbf{C}}_{k}}=\{(0,0,\dots)\}$.  Furthermore,   each  $\textit{\textbf{C}}_{k}$ is a clopen subset  homeomorphic to $\textit{\textbf{C}}$ via the  homeomorphism 
$$T_{k}:\textit{\textbf{C}}_{k}\rightarrow \textit{\textbf{C}},\quad (\underset{(k-1)\text{-times}}{\underbrace{{0},\dots, {0}}},2,x_{1},x_{2},\dots)\mapsto (x_{1},x_{2},\dots),$$  
which is Lipschitz.

  \begin{example}\label{gshfkfwee} For $j \in\mathbb{N}$, consider $\psi_{j}:(\textit{\textbf{C}},\textbf{d}_{\alpha})\rightarrow (\textit{\textbf{C}},\textbf{d}_{\alpha})$  defined   as $\psi_{{j}}(0,0,\dots)=(0,0,\dots)$  and $  \psi_{j}|_{\textit{\textbf{C}}_{k}}= T_{k}^{-1}\sigma^{jk} T_{k} $ for $k\geq 1$, where $\sigma:\textit{\textbf{C}}\rightarrow \textit{\textbf{C}} $ is the left shift map.       In \cite{Muentes}, Proposition 5.1,   it is proven that if  $\alpha=3$, then   \begin{equation*} {\text{mdim}}_\text{M}({\textit{\textbf{C}}}   ,\textbf{d}_{3},\psi_{j})= \frac{j\log 2}{(j+1)\log 3}.\end{equation*}
Following the same steps, we  will  prove that  \begin{equation*}{\text{mdim}}_\text{M}({\textit{\textbf{C}}}  ,\textbf{d}_{\alpha} ,\psi_{j})=  \frac{j\log 2}{(j+1)\log \alpha}\quad\text{for any }\alpha >1.\end{equation*}      
Take $\varepsilon >0$. For any $k\geq 1,$ set    $\varepsilon_{k}=\alpha^{-k(j+1)}$. There exists $k\geq 1$ such that $\varepsilon\in[\varepsilon_{k+1},\varepsilon_{k}]$. 
For $n\geq 1$ and $k\geq 1$, take $\bar{z}_{1}=(z_{1}^{1},\dots,z_{jk}^{1}),\dots, \bar{z}_{n}=(z_{1}^{n},\dots,z_{jk}^{n})$, with $z_{i}^{s}\in \{0,2\},$ and set
$$ {A}^{k}_{\bar{z}_{1},\dots,\bar{z}_{n}}=\{(\underset{(k-1)\text{-times}}{\underbrace{{0},\dots, {0}}},2,z_{1}^{1},\dots,z_{jk}^{1},\dots,z_{1}^{n},\dots,z_{jk}^{n}, x_{1},\dots,x_s,.\,.\,.  ): x_{i}\in\{0,2\}\}\subseteq \textit{\textbf{C}}_{k}.$$
Note that if ${A}^{k}_{\bar{z}_{1},\dots,\bar{z}_{n}}\neq {A}^{k}_{\bar{w}_{1},\dots,\bar{w}_{n}}$ and $\bar{x}\in {A}^{k}_{\bar{z}_{1},\dots,\bar{z}_{n}}$, $\bar{y}\in {A}^{k}_{\bar{w}_{1},\dots,\bar{w}_{n}}$, then  $(\textbf{d}_{\alpha})_{n+1}(\bar{x},\bar{y})>\frac{1}{\alpha^{k(j+1)}}$.   Therefore, 
$  \text{sep}(n+1,\psi_{j},\varepsilon_{k}) \geq 2^{jnk}$ and hence 
\begin{align*}\limsup_{n\rightarrow \infty} \frac{\log\text{sep}(n+1,\psi_{j},\varepsilon) }{n+1}\geq 
\limsup_{n\rightarrow \infty} \frac{\log\text{sep}(n+1,\psi_{j},\varepsilon_{k}) }{n+1}& \geq \lim_{n\rightarrow \infty} \frac{n\log(2^{jk})}{n+1}= \log 2^{jk}.\end{align*}
Thus, 
\begin{align*}  \underline{\text{mdim}}_\text{M}({\textit{\textbf{C}}} ,\textbf{d}_{\alpha},\psi_{j})& \geq
\lim_{k\rightarrow \infty} \frac{\log\text{sep}(\psi_{j},\varepsilon_{k}) }{-\log \varepsilon_{k+1}} \geq  \lim_{k\rightarrow \infty}\frac{\log(2^{jk})}{\log (\alpha^{(k+1)(j+1)})} = \lim_{k\rightarrow \infty} \frac{{k}j\log2}{(k+1)(j+1)\log \alpha}\\
&=\frac{j\log 2}{(j+1)\log \alpha}.\end{align*}
Therefore,  \begin{equation}\label{iuhugl}  \overline{\text{mdim}}_\text{M}({\textit{\textbf{C}}} ,\textbf{d}_{\alpha},\psi_{j})\geq \underline{\text{mdim}}_\text{M}({\textit{\textbf{C}}} ,\textbf{d}_{\alpha},\psi_{j})\geq  \frac{j\log 2}{(j+1)\log \alpha}.\end{equation}

On the other hand,   note that for each $l\in \{1,\dots,k\}$, the sets $ {A}^{l}_{\bar{z}_{1},\dots,\bar{z}_{n}}$ have $(\textbf{d}_{\alpha})_{n}$-diameter less than
 $\varepsilon_k$. Furthermore, the sets $\{(0,0,\dots)\}$ and  $\overset{\infty}{\underset{s=k+1}{\bigcup}}  \textbf{\textit{C}}_{s}$ has $(\textbf{d}_{\alpha})_{n}$-diameter less than
 $\varepsilon_k$. Hence $$\text{cov}(n , \psi_{j},\varepsilon_{k})\leq k2^{njk} +2 \leq2k2^{njk}$$ 
and therefore
\begin{equation*}\text{cov}( \psi_{j},\varepsilon_{k})\leq\lim_{n\rightarrow\infty} \frac{\log(2k2^{njk})}{n}= \log 2^{jk}. \end{equation*} Hence  \begin{equation}\label{iuhugl2}  \overline{\text{mdim}}_\text{M}({\textit{\textbf{C}}}   ,\textbf{d}_{\alpha},\psi_{j})=\limsup_{\varepsilon\rightarrow 0}\frac{\text{cov}(\psi_{j},\varepsilon)}{-\log\varepsilon}\leq \limsup_{k\rightarrow \infty}\frac{\text{cov}(\psi_{j},\varepsilon_{k+1})}{-\log\varepsilon_{k}} \leq \frac{j\log 2}{(j+1)\log \alpha}.\end{equation}
It follows from \eqref{iuhugl} and \eqref{iuhugl2} that \begin{equation*}   {\text{mdim}}_\text{M}({\textit{\textbf{C}}}   ,\textbf{d}_{\alpha},\psi_{j})= \frac{j\log 2}{(j+1)\log \alpha}.\end{equation*}
\end{example}

\begin{example}\label{gshfkf}  
Take $\varphi:(\textit{\textbf{C}},\textbf{d}_{\alpha})\rightarrow (\textit{\textbf{C}},\textbf{d}_{\alpha})$ the map defined   as $\varphi(0,0,\dots)=(0,0,\dots)$  and $  \varphi|_{\textit{\textbf{C}}_{k}}= T_{k}^{-1}\sigma^{{k}^{2}} T_{k} $ for $k\geq 1$, where $\sigma:\textit{\textbf{C}}\rightarrow \textit{\textbf{C}} $ is the left shift map.    Note that $\varphi$ is a continuous map.  We prove that $$   {\text{mdim}}_\text{M}({\textit{\textbf{C}}} ,\textbf{d}_{\alpha},\varphi)=  \text{dim}_{\text{B}}(\textit{\textbf{C}},\textbf{d}_{\alpha})=\frac{\log 2}{\log \alpha}.$$ 
Take $\varepsilon >0$. For any $k\geq 1,$ set    $\varepsilon_{k}=\frac{1}{\alpha^{{k}^{2}+k}}$. There exists $k\geq 1$ such that $\varepsilon\in[\varepsilon_{k+1},\varepsilon_{k}]$. 
For $n\geq 1$ and $k\geq 1$, take $\bar{z}_{1}=(z_{1}^{1},\dots,z_{{k}^{2}}^{1}),\dots, \bar{z}_{n}=(z_{1}^{n},\dots,z_{{k}^{2}}^{n})$, with $z_{i}^{s}\in \{0,2\},$ and set
$$ {A}^{k}_{\bar{z}_{1},\dots,\bar{z}_{n}}=\{(\underset{(k-1)\text{-times}}{\underbrace{{0},\dots, {0}}},2,z_{1}^{1},\dots,z_{{k}^{2}}^{1},\dots,z_{1}^{n},\dots,z_{{k}^{2}}^{n}, x_{1},\dots,x_s,.\,.\,.  ): x_{i}\in\{0,2\}\}\subseteq \textit{\textbf{C}}_{k}.$$
Note that if ${A}^{k}_{\bar{z}_{1},\dots,\bar{z}_{n}}\neq {A}^{k}_{\bar{w}_{1},\dots,\bar{w}_{n}}$ and $\bar{x}\in {A}^{k}_{\bar{z}_{1},\dots,\bar{z}_{n}}$, $\bar{y}\in {A}^{k}_{\bar{w}_{1},\dots,\bar{w}_{n}}$, then  $(\textbf{d}_{\alpha})_{n+1}(\bar{x},\bar{y})>\frac{1}{\alpha^{k^{2}+k}}$.   Therefore 
$  \text{sep}(n+1,\varphi,\varepsilon_{k}) \geq \left(2^{k^{2}}\right)^{n}$ and hence 
\begin{align*}\lim_{n\rightarrow \infty} \frac{\log\text{sep}(n+1,\varphi,\varepsilon) }{n+1}\geq 
\lim_{n\rightarrow \infty} \frac{\log\text{sep}(n+1,\varphi,\varepsilon_{k}) }{n+1}& \geq \lim_{n\rightarrow \infty} \frac{n\log(2^{{k}^{2}})}{n+1}= \log 2^{{k}^{2}}.\end{align*}
Thus, 
\begin{align*}  \underline{\text{mdim}}_\text{M}({\textit{\textbf{C}}} , \textbf{d}_{\alpha},\varphi)& \geq
\liminf_{k\rightarrow \infty} \frac{\log\text{sep}(\varphi,\varepsilon_{k}) }{-\log \varepsilon_{k+1}} \geq  \lim_{k\rightarrow \infty}\frac{\log(2^{k^{2}})}{\log (\alpha^{(k+1)^{2}+k+1})} \\
&= \lim_{k\rightarrow \infty} \frac{k^{2}\log2}{((k+1)^{2}+k+1)\log \alpha} = \frac{\log 2}{\log \alpha} .\end{align*}
Therefore, by \eqref{iuhugltr}, we have that  \begin{equation*}  \overline{\text{mdim}}_\text{M}({\textit{\textbf{C}}} ,\textbf{d}_{\alpha},\varphi)= \underline{\text{mdim}}_\text{M}({\textit{\textbf{C}}} ,\textbf{d}_{\alpha},\varphi)= \frac{\log 2}{\log \alpha}.\end{equation*}
\end{example}

\section{On the continuity of metric and Hausdorff mean dimension maps}\label{section5}

Throughout this section, we will work with a fixed metrizable compact topological space $(\MM, \tau)$. We use $\MM(\tau)$ to denote the set of all metrics that induce the same topology $\tau$ on $\MM$. Formally, this set is defined as:
$$\MM(\tau) = \lbrace d \colon d \text{ is a metric for } \MM \text{ and } \tau_d = \tau \rbrace,$$
where $\tau_{d}$ is the topology induced by   $d$ on $\MM$. We remember that two metrics on a space $\MM$ are equivalent if they induce the same topology on $\MM$. Therefore, if $d$ is a fixed metric on $\MM$ which induces the topology $\tau$, then $\MM(\tau)$ consists on all the metrics on $\MM$ which are equivalent to $d$. 

\medskip 

From now on, we will fix a continuous map $f:\MM\rightarrow \MM$.  Consider the functions   \begin{equation*} 
\begin{aligned}
\text{mdim}_{\text{M}}(\MM,f) \colon \MM(\tau)& \to \mathbb{R} \cup \lbrace \infty \rbrace\\
d& \mapsto \text{mdim}_{\text{M}}(\MM,d,f)
\end{aligned} \quad\text{and}\quad \begin{aligned}
\text{mdim}_{\text{H}}(\MM,f) \colon \MM(\tau) &\to \mathbb{R} \cup \lbrace \infty \rbrace\\
d& \mapsto \text{mdim}_{\text{H}}(\MM,d,f),
\end{aligned}
\end{equation*}
where $\MM(\tau)$  is endowed with the metric  
$$ D(d_1,d_2) = \max_{x,y \in \MM} \left\{| d_1(x,y) - d_2(x,y)|: \text{ for } d_1,d_2 \in \MM(\tau) \right\}$$ 
(see \cite{metric}).  We will prove   there exist continuous maps $f:\MM\rightarrow \MM$ such that   $ \text{mdim}_{\text{M}}(\MM,f)$ is not a continuous map.   
\begin{remark}\label{fefefeesee}
    Remember   that two metrics $d_1$ and $d_2$ on $\MM$ are  called \textbf{uniformly equivalent} if there are  real constants $0 < a \leq b$ such that
\begin{equation*}\label{fndjf} a d_1(x,y) \leq d_2(x,y) \leq b d_1(x,y),\end{equation*}
for all $x,y \in \MM$.  It is not difficult to see  that, if $d_1$ and $d_2  \in \MM(\tau)$ are two uniformly equivalent metrics on $\MM$, then
$$\text{mdim}_{\text{M}}(\MM , d_1,f) = \text{mdim}_{\text{M}}(\MM , d_2,  f)\quad\text{ 
   and 
}\quad\text{mdim}_{\text{H}}(\MM,d_1,f) =  \text{mdim}_{\text{H}}(\MM,d_2,f).$$  
\end{remark}
 
\begin{remark} Note  if $h_{\text{top}}(\MM,f) < \infty$, then $\text{mdim}_{\text{M}}(\MM,d,f)=0$. Therefore, as the topological entropy does not depend on
the metric, we have that $\text{mdim}_{\text{M}}(\MM,\tilde{d},f)=0$ for any $\tilde{d} \in \MM(\tau$). Analogously, we can prove that $\text{mdim}_{\text{H}}(\MM,\tilde{d},f)=0$ for any $\tilde{d} \in \MM(\tau$). Hence, if $h_{\text{top}}(\MM,f) < \infty$, then  $$\text{mdim}_{\text{M}}(\MM,f) \colon \MM(\tau) \to \mathbb{R} \quad \text{ and }\quad \text{mdim}_{\text{H}}(\MM,f) \colon \MM(\tau) \to \mathbb{R} $$ are  the zero maps. 
  \end{remark}

 In the next example, we will exhibit a class of dynamical systems such that the metric and  Hausdorff mean dimension maps are not continuous, with respect to the metric.

\begin{example}\label{exampledesc}   Take $\MM=[0,1]$ endowed with the metric $|\cdot |$  induced by the absolute value. For  fixed $s\in\mathbb{N}$ and $r\in (0,\infty)$, set $f=\phi_{s,r}:[0,1]\rightarrow [0,1]$  and $I_{n}=[a_{n-1},a_{n}]$ defined in Example \ref{example11}. Hence, $${\text{mdim}}_{\text{H}}([0,1] ,|\cdot |,f ) = {\text{mdim}}_{\text{M}}([0,1] ,|\cdot |,f ) =\frac{s}{r+s}.$$ Fix any metric $d$ on $ \MM$ equivalent to $|\cdot |$.  We will find two metrics ${d}_{1}$ and $d_{2}$ on $[0,1]$,  arbitrarily close to $d$, such that $$ {\text{mdim}}_{\text{M}}([0,1] ,{d}_{1},f ) =1\quad\text{and}\quad  {\text{mdim}}_{\text{M}}([0,1] , {d}_{2},f ) =\frac{1}{2}.$$ Let $\varepsilon >0$. There exists $N\in \mathbb{N}$ such that  $$\max\{\text{diam}_{d}(\cup_{n=N}^{\infty}I_{n}) \}<\frac{\varepsilon}{2}.$$ 
Set $b_{N}=a_{N} $ and $b_{n}=a_{N}+\sum_{j=1}^{n}\frac{6\varepsilon}{2\pi^{2} j^{2}}$ for $n\geq N+1$ and consider $J_{n}=[b_{n-1},b_{n}]$  for any $n\geq  N+1$. Take the homeomorphism $h:[0,1]\rightarrow [0,a_{N}+\frac{\varepsilon}{2}]$ defined by  
$$
h(x)= \begin{cases}  x\quad &\text{if } x  \in [0,a_{N}]\\
 a_{N}+\varepsilon/2\quad &\text{if } x =1\\
  \left[ \frac{b_{n+1}-b_{n}}{a_{n+1}-a_{n}}\right](x-a_{n})+b_{n}\quad & \text{if } x  \in I_{n}, \text{ for some } n\geq N+1.
\end{cases}$$
Consider the metric $d_{1}$ on $[0,1]$ given by 
$$d_{1}(x,y)= \begin{cases}  d(x,y)\quad &\text{if } x ,y \in [0,a_{N}]\\
  |h(x)-h(y)|\quad & \text{if } x,y  \in [a_{N},1]=\overline{\bigcup_{n= N+1}^{\infty}I_{n}}\\
    |h(x)-a_{N}|+ d(y,a_{N})\quad & \text{if }  y  \in [0,a_{N}], x \in [a_{N},1]\\
     |h(y)-a_{N}|+ d(x,a_{N})\quad & \text{if }  x \in [0,a_{N}], y \in [a_{N},1].
\end{cases}$$
As $d_1$ depends of the metric $d$ and of the homeomorphism $h$, we have that $d_1$ belongs to $\MM(\tau)$. Furthermore, 
$$ \text{diam}_{d_{1}}\left(\bigcup_{j=N+1}^{\infty} I_{j}\right)= \text{diam}_{|\cdot|}\left(\bigcup_{j=N+1}^{\infty} J_{j}\right) = \sum_{j=N+1}^{\infty}   |J_{j}|=  \sum_{j=N+1}^{\infty} \frac{6\varepsilon}{2\pi^{2} j^{2}}<\frac{\varepsilon}{2}. $$   We   prove that $D(d_{1},d)<\varepsilon$. If $x,y\in [0,a_{N}]$ or if $x,y\in [a_{N},1]$, then $|d(x,y)-d_{1}(x,y)|=0$. Suppose that $x\in [0,a_{N}]$ and $y\in [a_{N},1]$. From definition of $d_{1}$, we have that $$d_{1}(x,y)= |h(y)-a_{N}|+ d(x,a_{N}).$$ Since $d(x,y)\leq d(x,a_{N})+d(a_{N},y)$, it follows that
\begin{align*} d(x,y)-d_{1}(x,y)&\leq d(x,a_{N})+d(a_{N},y)-d(x,a_{N})-|a_{N}-h(y)|\\
&= d(a_{N},y)-|a_{N}-h(y)|<\varepsilon   \end{align*}
and 
\begin{align*} d_{1}(x,y)-d(x,y)  & =  d(x,a_{N})+|a_{N}-h(y)|- d(x,y)  \\
&\leq d(x,y)+d(y,a_{N})+|a_{N}-h(y)|- d(x,y) \\
&= d(y,a_{N})+|a_{N}-h(y)|  <\varepsilon.   \end{align*}
Hence,   $D(d_{1},d)<\varepsilon$.

Next, given that $h_{\text{top}}(f|_{[0,a_{N}]})<\infty,$ we have 
$${{\text{mdim}}_{\text{M}}}([0,1] ,d_{1},f ) ={{\text{mdim}}_{\text{M}}}([0,1] ,d_{1},f|_{[a_{N},a_{N}+\varepsilon/2]} ).$$ By  \cite[Example 3.1]{Muentes} and \cite[Example 2.6]{Muentes3}, it is possible to obtain that
$$ {{\text{mdim}}_{\text{H}}}([0,1] ,d_{1},f ) ={{\text{mdim}}_{\text{M}}}([0,1] ,d_{1},f ) =1.   $$ 

The existence of $d_{2}$ can be shown  analogously 
taking $r=s$, $c_{N}=a_{N}$ and $c_{n}=a_{N}+\sum_{j=1}^{n}\frac{A\varepsilon}{3^{is}}$ for $n\geq N+1$, where $A=\frac{1}{\sum_{j=1}^{\infty}{3^{-is}}}$, and considering $K_{n}=[c_{n-1},c_{n}]$  for any $n\geq  N+1$. In consequence, $\text{mdim}_{\text{M}}(\MM,f)$ and $\text{mdim}_{\text{H}}(\MM,f)$ are not continuous on $d$.
\end{example}

In Example \ref{exampledesc}, we proved that there exists a dynamical system with metric mean dimension and mean Hausdorff  dimension maps not continuous with respect to the metrics. In the following theorem, we will prove that this result is more general.


 \begin{theorem}\label{dwfwfwfw}  Set  $\emph{Q}=\emph{M}$ or $\emph{H}$. If there exists a continuous map $f:\MM\rightarrow \MM$ such that $\emph{mdim}_{\emph{Q}}(\MM,d,f)>0$, for some $d\in \MM(\tau )$, then 
\begin{equation*}
\begin{aligned}
\emph{mdim}_{\emph{Q}}(\MM,f) \colon \MM(\tau) &\to \mathbb{R} \cup \lbrace \infty \rbrace\\
d &\mapsto \emph{mdim}_{\emph{Q}}(\MM,d,f)
\end{aligned}
\end{equation*}
is not continuous anywhere. 
\end{theorem}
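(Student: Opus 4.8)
The plan is to fix an arbitrary $d_0\in\MM(\tau)$, write $v_0=\text{mdim}_{\text{Q}}(\MM,d_0,f)\in[0,\infty]$, and exhibit a sequence $d_t\to d_0$ in $(\MM(\tau),D)$ along which $\text{mdim}_{\text{Q}}(\MM,\cdot,f)$ does not converge to $v_0$. First note that the hypothesis together with the remark preceding the theorem forces $h_{\text{top}}(\MM,f)=\infty$; fix once and for all a metric $d_1\in\MM(\tau)$ with $\text{mdim}_{\text{Q}}(\MM,d_1,f)=a>0$. I would use four elementary facts, each verified directly: (i) $\text{mdim}_{\text{Q}}$ is unchanged under uniformly equivalent metrics (Remark \ref{fefefeesee}); (ii) $\text{mdim}_{\text{Q}}$ is monotone non-decreasing in the metric, i.e.\ $\rho\le\rho'$ implies $\text{mdim}_{\text{Q}}(\MM,\rho,f)\le\text{mdim}_{\text{Q}}(\MM,\rho',f)$ (for $\text{Q}=\text{M}$, an $(n,f,\varepsilon)$-separated set for $\rho$ is one for $\rho'$; for $\text{Q}=\text{H}$, $\text{diam}_\rho\le\text{diam}_{\rho'}$ and every $\rho'$-admissible cover is $\rho$-admissible, so $\text{H}^s_\varepsilon(\MM,\rho_n)\le\text{H}^s_\varepsilon(\MM,\rho'_n)$); (iii) for $\theta\in(0,1]$ the snowflake $\rho^\theta$ again lies in $\MM(\tau)$ and $\text{mdim}_{\text{Q}}(\MM,\rho^\theta,f)=\tfrac1\theta\text{mdim}_{\text{Q}}(\MM,\rho,f)$, which follows from $(\rho^\theta)_n=(\rho_n)^\theta$ together with $\text{sep}_{\rho^\theta}(n,f,\varepsilon)=\text{sep}_\rho(n,f,\varepsilon^{1/\theta})$ (resp.\ $\text{H}^s_\varepsilon(\MM,(\rho^\theta)_n)=\text{H}^{\theta s}_{\varepsilon^{1/\theta}}(\MM,\rho_n)$); and (iv) for $t\in(0,1)$ the convex combination $(1-t)d_0+t\rho$ is a metric in $\MM(\tau)$, is uniformly equivalent to $\max\{d_0,\rho\}$ (since $\min\{t,1-t\}\max\{d_0,\rho\}\le(1-t)d_0+t\rho\le\max\{d_0,\rho\}$), and $D((1-t)d_0+t\rho,d_0)=t\,D(\rho,d_0)\to0$ as $t\to0^+$.

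\emph{Upward case: $v_0<\infty$.} Choose $\theta\in(0,1]$ with $a/\theta>v_0$ and set $\rho=d_1^\theta$, so that $\text{mdim}_{\text{Q}}(\MM,\rho,f)=a/\theta$ by (iii). For $t\in(0,1)$ put $d_t=(1-t)d_0+t\rho$. Then $d_t\in\MM(\tau)$ and $D(d_t,d_0)\to0$ by (iv), while by (i), (iv) and (ii),
$\text{mdim}_{\text{Q}}(\MM,d_t,f)=\text{mdim}_{\text{Q}}(\MM,\max\{d_0,\rho\},f)\ge\text{mdim}_{\text{Q}}(\MM,\rho,f)=a/\theta$. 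Hence $\text{mdim}_{\text{Q}}(\MM,d_t,f)-v_0\ge a/\theta-v_0>0$ for all $t\in(0,1)$ although $d_t\to d_0$, so $\text{mdim}_{\text{Q}}(\MM,f)$ is discontinuous at $d_0$. Since $d_0$ was an arbitrary metric with finite value, this already settles all such points.

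\emph{Downward case: $v_0=\infty$.} Here no global reparametrization can help: the only way to post-compose $d_0$ with a function and remain a metric on all of $\MM$ is with a concave increasing $F$, and concavity forces $F(\delta)\ge c\delta$ near $0$, whence $|\log F(\delta)|\le|\log\delta|+O(1)$ and $\text{mdim}_{\text{M}}(\MM,F\circ d_0,f)\ge\text{mdim}_{\text{M}}(\MM,d_0,f)=\infty$. So I would instead localize, mimicking Example \ref{exampledesc}: using $h_{\text{top}}(\MM,f)=\infty$, produce for each $\delta>0$ closed forward-invariant sets $A,B$ with $\MM=A\cup B$, $h_{\text{top}}(\MM,f|_A)<\infty$ (hence $\text{mdim}_{\text{Q}}(A,d_0,f|_A)=0$) and $\text{diam}_{d_0}(B)<\delta$, the set $B$ carrying all the complexity of $f$. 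Replace $d_0$ on $B$ by the pull-back, under a homeomorphism of $B$ onto a set of $d_0$-diameter $<\delta$, of a metric along which $f$-orbits separate extremely slowly — the analogue of the super-fast-shrinking blocks $J_n=[2^{-n^n},2^{-n^n+1}]$ used for $\varphi_s$ — and glue to $d_0$ on $A$ through $A\cap B$ exactly as in Example \ref{exampledesc}. The resulting metric $d$ lies in $\MM(\tau)$ with $D(d,d_0)<C\delta$, and, since $\text{mdim}_{\text{Q}}$ of a union of two invariant sets equals the maximum of the two values, $\text{mdim}_{\text{Q}}(\MM,d,f)=\max\{0,\text{mdim}_{\text{Q}}(B,d,f|_B)\}=0\ne\infty=v_0$, giving discontinuity at $d_0$. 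Together with the upward case this covers every $d_0\in\MM(\tau)$.

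\emph{Main obstacle.} The routine part is everything in the first two paragraphs. The real difficulty is the downward case, and specifically the structural step: given that $\text{mdim}_{\text{Q}}(\MM,d_0,f)=\infty$, extracting a decomposition $\MM=A\cup B$ with $A$ forward-invariant of finite topological entropy and $B$ forward-invariant of arbitrarily small $d_0$-diameter carrying all the dynamical complexity, so that a local reparametrization of $d_0$ on the small invariant piece $B$ (in the style of $\varphi_s$ in Example \ref{exampledesc}) genuinely collapses $\text{mdim}_{\text{Q}}$ to $0$, while the glued object remains a metric in $\MM(\tau)$ uniformly $D$-close to $d_0$. Realizing this "shrinking invariant tail" picture in full generality — rather than for the concrete interval maps of Example \ref{exampledesc} — is where the argument must do its work.
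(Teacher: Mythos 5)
Your upward case is correct and is a genuinely different route from the paper's. The paper fixes a metric $d$ and perturbs it at small scales only, setting $d_{\alpha,\varepsilon}=d$ where $d\geq\varepsilon$ and $d_{\alpha,\varepsilon}=\varepsilon^{1-\alpha}d^{\alpha}$ where $d<\varepsilon$; this gives $D(d,d_{\alpha,\varepsilon})<2\varepsilon$ and $\text{mdim}_{\text{Q}}(\MM,d_{\alpha,\varepsilon},f)=\text{mdim}_{\text{Q}}(\MM,d,f)/\alpha$, so it witnesses discontinuity precisely at those metrics whose value lies in $(0,\infty)$. Your scheme --- convex combination with a snowflaked copy of a fixed good metric, combined with monotonicity and uniform-equivalence invariance --- is sound (all four auxiliary facts check out against Remark \ref{fefefeesee}, Examples \ref{metrica} and \ref{metricados}, and the definitions), and it buys something the paper's argument does not: it also handles base points $d_0$ with $\text{mdim}_{\text{Q}}(\MM,d_0,f)=0$, where the paper's multiplicative perturbation is inert.

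The genuine gap is your downward case. The proposed decomposition $\MM=A\cup B$ into closed forward-invariant sets with $h_{\text{top}}(\MM,f|_{A})<\infty$ and $\text{diam}_{d_0}(B)<\delta$ simply need not exist: if $f$ is minimal (and minimal systems carrying a metric of infinite metric mean dimension do occur), then $\MM$ has no proper nonempty closed invariant subsets at all, so there is nothing to shrink and nothing to glue. The ``shrinking invariant tail'' picture is special to reducible examples like $\phi_{s,r}$ on $[0,1]$ and cannot be the basis of a general argument. You correctly observe that a global reparametrization $F\circ d_0$ with $F$ subadditive and increasing can only increase the metric mean dimension, so a genuinely new idea is required at points where the value is $+\infty$; be aware, though, that the paper's own proof is equally silent there (the factor $1/\alpha$ gives no information when the value is $0$ or $\infty$), so on this point your proposal is no weaker than the published argument --- it just does not close that case either.
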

\begin{proof}   Let $(\MM,d)$ be a compact metric space and $f:\MM\to \MM$ be a continuous map such that $\text{mdim}_{\text{M}}(\MM,d,f)>0$. Given any $\alpha,\varepsilon\in (0,1)$, we define the metric
$$d_{\alpha,\varepsilon}(x,y)=\left\lbrace\begin{array}{cccc}
d(x,y), \text{ if } d(x,y)\geq \varepsilon,\\
\varepsilon^{1-\alpha}d(x,y)^\alpha, \text{ if } d(x,y)<\varepsilon.
\end{array}\right.$$
Note that   $d_{\alpha,\varepsilon}\in \MM(\tau)$. Moreover, taking $x,y\in \MM$ such that $d(x,y)\geq \varepsilon$, we have that $|d(x,y)-d_{\alpha,\varepsilon}(x,y)|=0<\varepsilon$. On the other hand, if we consider $x,y\in \MM$ such that $d(x,y)<\varepsilon$, we have that
\begin{eqnarray*}
|d(x,y)-d_{\alpha,\varepsilon}(x,y)|=|d(x,y)-\varepsilon^{1-\alpha} d(x,y)^\alpha |
\leq d(x,y) + \varepsilon^{1-\alpha} d(x,y)^\alpha < 2\varepsilon.
\end{eqnarray*}
Hence, $D(d,d_{\alpha,\varepsilon})<2\varepsilon$. However, for $\text{Q}=\text{M}$ or $\text{H}$ we prove
$$\text{mdim}_{\text{Q}}(\MM,d_{\alpha,\varepsilon},f)=\frac{\text{mdim}_{\text{Q}}(\MM,d,f)}{\alpha}.$$

Firstly, we prove to claim for metric mean dimension. Consider any $\eta\in(0,\varepsilon)$. Let $A$ an $(n,f,\eta)$-spanning set of $(\MM,d)$. Then, for any $y\in \MM$, there exists $x\in A$ such that $d_n(x,y)<\eta$. Hence,
 $$(d_{\alpha,\varepsilon})_{n}(x,y)=\varepsilon^{1-\alpha}d_n(x,y)^\alpha<\varepsilon^{1-\alpha}\eta^\alpha.$$ 
 Thus, $A$ is an $(n,f,\varepsilon^{1-\alpha}\eta^\alpha)$-spanning set of $(\MM,d_{\alpha,\varepsilon})$. Therefore, 
 $$\text{span}_{d_{\alpha,\varepsilon}}(f,\varepsilon^{1-\alpha}\eta^\alpha)\leq \text{span}_{d}(f,\eta),$$
and consequently, we obtain that
\begin{eqnarray}\label{1a}
\text{mdim}_{\text{M}}(\MM ,d_{\alpha,\varepsilon},f)=\displaystyle\lim_{\eta\to 0} \frac{\text{span}_{d_{\alpha,\varepsilon}}(f,\varepsilon^{1-\alpha}\eta^\alpha)}{|\log(\varepsilon^{1-\alpha} \eta^\alpha) |} &\leq & \displaystyle\lim_{\eta\to 0} \frac{\text{span}_{d}(f,\eta)}{\alpha|\log \eta |} \frac{|\log (\eta^\alpha)|}{|\log (\varepsilon^{1-\alpha}\eta^\alpha)|}\nonumber\\
&=& \frac{\text{mdim}_{\text{M}}(\MM,
d,f)}{\alpha}.\end{eqnarray}

On the other hand, notice that, for any $x,y\in \MM$ such that   $(d_{\alpha,\varepsilon})_{n}(x,y)<\varepsilon$, we have that $d_{n}(x,y)<\varepsilon$, because otherwise  $ (d_{\alpha,\varepsilon})_{n}(x,y)=d_{n}(x,y)\geq \varepsilon$. Let $E$ be an $(n,f,\eta)$-spanning set of $(\MM,d_{\alpha,\varepsilon})$, where $\eta\in (0,\varepsilon)$.  Then, for any $y\in \MM$, there exists $x\in E$ with $(d_{\alpha,\varepsilon})_n(x,y)<\eta$ and it follows that
$$(d_{\alpha,\varepsilon})_{n}(x,y)=\varepsilon^{1-\alpha}d_{n}(x,y)^\alpha<\eta<\varepsilon \Rightarrow  d_n(x,y)<\varepsilon^{\frac{\alpha-1}{\alpha}}\eta^{\frac{1}{\alpha}}.$$
Thus, $E$ is an $(n,f,\varepsilon^{\frac{\alpha-1}{\alpha}}\eta^{\frac{1}{\alpha}})$-spanning set of $(\MM,d)$ and therefore
$$\text{span}_{d_{\alpha,\varepsilon}}(f,\eta)\geq \text{span}_{d}(f,\varepsilon^{\frac{\alpha-1}{\alpha}}\eta^{\frac{1}{\alpha}}).$$
Hence, 
\begin{eqnarray}\label{1ab}
\text{mdim}_{\text{M}}(\MM,f,d_{\alpha,\varepsilon})=\displaystyle\lim_{\eta\to 0} \frac{\text{span}_{d_{\alpha,\varepsilon}}(f,\eta)}{|\log (\eta) |}&\geq & \displaystyle\lim_{\eta\to 0} \frac{\text{span}_{d}(f,\varepsilon^{\frac{\alpha-1}{\alpha}}\eta^{\frac{1}{\alpha}})}{|\log (\varepsilon^{\frac{\alpha-1}{\alpha}}\eta^{\frac{1}{\alpha}}) |}\frac{|\log (\varepsilon^{\frac{\alpha-1}{\alpha}}\eta^{\frac{1}{\alpha}}) |}{|\log \eta|}\nonumber\\
&=&\displaystyle\lim_{\eta\to 0} \frac{\text{span}_{d}(f,\varepsilon^{\frac{\alpha-1}{\alpha}}\eta^{\frac{1}{\alpha}})}{|\log (\varepsilon^{\frac{\alpha-1}{\alpha}}\eta^{\frac{1}{\alpha}}) |}\frac{|\log (\eta^{\frac{1}{\alpha}}) |}{|\log \eta|}\nonumber\\
&=& \frac{\text{mdim}_{\text{M}}(\MM,f,d)}{\alpha}.
\end{eqnarray}
It follows from \eqref{1a} and \eqref{1ab} that $\text{mdim}_{\text{M}}(\MM,f,d_{\alpha,\varepsilon})=\frac{\text{mdim}_{\text{M}}(\MM,f,d)}{\alpha}. $

Next, we prove the theorem for mean Hausdorff dimension. We will need the relation $$\text{mdim}_{\text{H}}(\MM,f,d^{\alpha})=\frac{\text{mdim}_{\text{H}}(\MM,f,d)}{\alpha},\quad\text{ for any }\alpha\in(0,1),$$ which will be shown in Example \ref{metrica}. Fix $\eta\in (0,\varepsilon)$. For every $x,y\in \MM$ with $d_{n}(x,y)<\eta,$  we have that $(d_{\alpha,\epsilon})_{n}(x,y)=\varepsilon^{1-\alpha}d_n(x,y)^\alpha$. Thus, for all $E\subset M$ such that $\text{diam}_{d_n^\alpha}(E)<\eta$, we have that $\text{diam}_{(d_{\alpha,\varepsilon})_n}(E)<\varepsilon^{1-\alpha}\eta$. Therefore $$H_{\varepsilon^{1-\alpha}\eta}^s(\MM,(d_{\alpha,\varepsilon})_{n})\leq H_\eta^s(\MM,d_n^\alpha), \quad\text{for every }0<\eta<\epsilon.$$ Thus,
\begin{equation}\label{ncdmfezx}\text{mdim}_{\text{H}}(\MM,d_{\alpha,\varepsilon},f)\leq\text{mdim}_{\text{H}}(\MM,d^\alpha , f) = \frac{\text{mdim}_{\text{H}}(\MM,d,f)}{\alpha}.\end{equation}  

On the other hand, given $\eta\in (0,\varepsilon)$, we have for every $x,y\in \MM$, with  $d_{n}(x,y)<\eta$, that  
$$(d_{\alpha,\epsilon})_{n}(x,y)= \varepsilon^{1-\alpha} d_n(x,y)^\alpha> \eta^{1-\alpha} d_n(x,y)^\alpha.
 $$ Thus, for all $E\subset\MM$ with $\text{diam}_{(d_{\alpha,\varepsilon})_{n}}(E)<\eta$, it follows that $\text{diam}_{d_n^\alpha}(E)<\eta^\alpha$. Therefore, we obtain that $$H_\eta^s(\MM,(d_{\alpha,\varepsilon})_{n})\geq H_{\eta^\alpha}^s(\MM,d_n^\alpha).$$ Consequently,
\begin{equation}\label{zxxxm}\text{mdim}_{\text{H}}(\MM ,d_{\alpha,\varepsilon},f)\geq\text{mdim}_{\text{H}}(\MM ,d^\alpha,f) = \frac{\text{mdim}_{\text{H}}(\MM,d,f)}{\alpha}.\end{equation}
It follows from \eqref{ncdmfezx} and \eqref{zxxxm} that $\text{mdim}_{\text{H}}(\MM ,d_{\alpha,\varepsilon},f)=\frac{\text{mdim}_{\text{H}}(\MM,d,f)}{\alpha}.$

Next, given that 
$$\text{mdim}_{\text{M}}(\MM ,d_{\alpha,\varepsilon}, f)= \frac{\text{mdim}_{\text{M}}(\MM,d, f)}{\alpha}\quad \text{ and }\quad \text{mdim}_{\text{H}}(\MM,d_{\alpha,\varepsilon},f)= \frac{\text{mdim}_{\text{H}}(\MM,d,f)}{\alpha},$$ 
and $D(d_{\alpha,\varepsilon},d)<2\varepsilon$, for any $\varepsilon>0$, we can conclude that $\text{mdim}_{\text{M}}(\MM,d,f)$ and $\text{mdim}_{\text{H}}(\MM,d,f)$ are not continuous with respect to the metric.\end{proof}

\section{Composing metrics with subadditive continuous maps} \label{section7}
In this section, we will consider metrics in the set  
\begin{equation*}\label{SAM}
\mathcal{A}_{d}(\MM) = \lbrace g_d \colon g_d(x,y) = g (d(x,y))  \text{ for all }x, y\in  \MM,  \text{ and } g \in \mathcal{A}[0,\rho]  \rbrace,
\end{equation*}
where $\rho$ is the diameter of $\MM$ and \begin{equation*}\label{SA}
\mathcal{A}[0, \rho] = \left\{ g :[0,\rho]\rightarrow [0,\infty) : g\text{ is continuous,  increasing, subadditive  and }g^{-1}( 0) =\{ 0\}   \right\}.
\end{equation*}

Remember that $g:[0,\infty)\rightarrow [0,\infty)$ is called \textbf{subadditive} if $g(x + y) \leq g(x) + g(y)$ for all $x,y  .$  For instance, if $g$ is \textbf{concave} (that is, if $g(tx+(1-t)y)\geq tg(x)+(1-t)g(y)$, for any $t\in[0,1]$ and $x,y\in [0,\rho]$) and $g(0)\geq 0$,  then $g$ is subadditive. In fact, if $g:[0,\infty)\rightarrow [0,\infty)$ is concave and $g(0)=0$, then  $tg(x)\leq g(tx)$ for any $t\in[0,1]$ and $x\in [0,\infty)$. Hence, for any $x,y\in[0,\infty)$, taking  $t=\frac{x}{x+y}\in [0,1]$, we have $$ 
g(x)=g(t(x+y))\geq tg(x+y)\quad\text{and}\quad g(y)= g((1-t)(x+y))\geq (1-t)g(x+y).
$$ Therefore, $g(x)+g(y)\geq g(x+y).$

\begin{lemma}\label{metricg}
For any $g \in\mathcal{A}[0,\rho]$, we have  that:   
\begin{itemize}  \item[i)] $g_{d}$ is a metric on $\MM$. 
\item[ii)] $g_d \in \MM(\tau)$. Consequently, $\mathcal{A}_{d}(\MM) \subseteq  \MM(\tau)$.
\item[iii)] If $f:\MM\rightarrow\MM$ is a continuous map, then, for any $n\in\mathbb{N}$ and $x,y\in \MM$, we have $({g_{d}})_{n}(x,y)= g(d_{n}(x,y))$.
\end{itemize}
\end{lemma}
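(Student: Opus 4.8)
The plan is to check the three items in order; part i) is the only one that needs genuine care. Throughout I use that $g\colon[0,\rho]\to[0,\infty)$ is continuous, nondecreasing, subadditive on $[0,\rho]$, satisfies $g^{-1}(0)=\{0\}$, and that $d$ has diameter $\rho$. For part i), positivity and the identity of indiscernibles are immediate: $g_d(x,y)=g(d(x,y))\ge0$, and $g_d(x,y)=0$ iff $d(x,y)\in g^{-1}(0)=\{0\}$ iff $x=y$; symmetry is inherited from $d$. For the triangle inequality I fix $x,y,z$ and set $a=d(x,y)$, $b=d(y,z)$, $c=d(x,z)$, so that $a,b,c\in[0,\rho]$ and $c\le a+b$. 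If $a+b\le\rho$, monotonicity and subadditivity give $g(c)\le g(a+b)\le g(a)+g(b)$, i.e. $g_d(x,z)\le g_d(x,y)+g_d(y,z)$. If $a+b>\rho$, I first establish $g(\rho)\le g(a)+g(b)$: assuming $a\le b$ one has $0\le\rho-b\le a$, hence $g(\rho)=g\big(b+(\rho-b)\big)\le g(b)+g(\rho-b)\le g(b)+g(a)$; since $c\le\rho$, monotonicity then gives $g(c)\le g(\rho)\le g(a)+g(b)$. (Equivalently, one extends $g$ to $[0,\infty)$ by the constant $g(\rho)$ on $(\rho,\infty)$, checks that this extension is subadditive --- the verification being exactly the computation just made --- and deduces the triangle inequality for $g_d$ formally.)

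For part ii), I would show that both identity maps between $(\MM,d)$ and $(\MM,g_d)$ are continuous, which forces $\tau_{g_d}=\tau_d=\tau$, hence $g_d\in\MM(\tau)$ and $\mathcal{A}_d(\MM)\subseteq\MM(\tau)$. Continuity of $\mathrm{id}\colon(\MM,d)\to(\MM,g_d)$ follows from continuity of $g$ at $0$ with $g(0)=0$: for $\varepsilon>0$ pick $\delta\in(0,\rho]$ with $g(t)<\varepsilon$ for $0\le t<\delta$, so $d(x,y)<\delta$ implies $g_d(x,y)<\varepsilon$. For $\mathrm{id}\colon(\MM,g_d)\to(\MM,d)$: given $\varepsilon>0$, set $\varepsilon'=\min\{\varepsilon,\rho\}$ and $\delta=g(\varepsilon')>0$; then $d(x,y)\ge\varepsilon'$ would force $g_d(x,y)=g(d(x,y))\ge g(\varepsilon')=\delta$ by monotonicity, so $g_d(x,y)<\delta$ implies $d(x,y)<\varepsilon'\le\varepsilon$. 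For part iii), I would use that a nondecreasing function commutes with finite maxima: since $g\big(\max_i a_i\big)\ge g(a_j)$ for every $j$ and $\max_i a_i$ is attained at some index, $g\big(\max_i a_i\big)=\max_i g(a_i)$; applying this with $a_i=d(f^{i}(x),f^{i}(y))$ gives
$$(g_d)_n(x,y)=\max_{0\le i\le n-1} g\big(d(f^{i}(x),f^{i}(y))\big)=g\Big(\max_{0\le i\le n-1} d(f^{i}(x),f^{i}(y))\Big)=g(d_n(x,y)).$$

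The main obstacle is precisely the triangle inequality in part i) when $d(x,y)+d(y,z)$ exceeds the diameter $\rho$, where $g$ is a priori undefined; the key point that overcomes it is that subadditivity on $[0,\rho]$ together with monotonicity already forces $g(\rho)\le g(a)+g(b)$ for all $a,b\in[0,\rho]$ with $a+b\ge\rho$. Everything else in the lemma is a routine use of the continuity of $g$, its monotonicity, and the normalization $g^{-1}(0)=\{0\}$.
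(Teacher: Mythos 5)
Your proof is correct and follows essentially the same route as the paper's: the metric axioms come from monotonicity, subadditivity and $g^{-1}(0)=\{0\}$; topological equivalence is obtained by showing the two identity maps are continuous (equivalently, nesting balls of each metric inside balls of the other); and part iii) follows because a nondecreasing map commutes with a finite maximum, exactly as in the paper. Two details differ, and both are worth noting. For the triangle inequality the paper simply writes $g_d(x,z)\le g\bigl(d(x,y)+d(y,z)\bigr)\le g(d(x,y))+g(d(y,z))$, tacitly evaluating $g$ at $d(x,y)+d(y,z)$, which can exceed the diameter $\rho$ and hence lie outside the stated domain $[0,\rho]$; your case split, based on the observation that monotonicity and subadditivity already force $g(\rho)\le g(a)+g(b)$ whenever $a+b\ge\rho$ (take $a\le b$ and write $g(\rho)\le g(b)+g(\rho-b)\le g(b)+g(a)$), repairs exactly this point, and is equivalent to extending $g$ by the constant $g(\rho)$ as you remark. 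For part ii) the paper proves $B_{g_d}(x,\delta)\subset B_d(x,\varepsilon)$ with $\delta=g(\varepsilon)/2$ via the subadditive doubling estimate ($a\le 2b$ implies $g(a)\le 2g(b)$), whereas you take $\delta=g(\min\{\varepsilon,\rho\})$ and argue by contraposition using only monotonicity and $g(\varepsilon')>0$; both arguments are valid, and yours is slightly more economical since it does not invoke subadditivity at that step.
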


\begin{proof}   i) Clearly $g_d(x,y)\geq 0$ and $g_d(x,y) = g_d(y,x)$ hold. Furthermore,  since $g^{-1}\lbrace 0 \rbrace = \lbrace 0 \rbrace$, we have
$$g_d(x,y) = 0 \Leftrightarrow g(d(x,y)) = 0 \Leftrightarrow d(x,y) = 0 \Leftrightarrow x=y.$$
Next, since $g$ is increasing, then, for $x, y, z \in \MM$, it follows that
\begin{align*}
g_d(x,z) & = g(d(x,z)) \leq g(d(x,y) + d(y,z))  \leq g(d(x,y)) + g(d(y,z))
 = g_d(x,z) + g_d(z,y).
\end{align*}
Hence, $g_{d}$ is a metric on $\MM$. 
 
 \medskip

ii) We prove that, given any $x \in \MM$, then for any $\varepsilon > 0$  there is $\delta > 0$ such that
$B_{d}(x,\delta) \subset B_{g_d}(x,\varepsilon),$ where $B_{d^{\prime}}(x,\epsilon)$ denotes the open ball with center $x$ and radius $\epsilon>0$ with respect a metric $d^{\prime}$.
Indeed, since $g$ is  continuous at 0 and $g^{-1}\lbrace 0 \rbrace = \lbrace 0 \rbrace$, for all $\varepsilon > 0$, there is $\delta > 0$ such that if $0 \leq a < \delta$, then   $  0 \leq g(a) < \varepsilon.$  Thus,  for any $y \in \MM$ such that  $d(x,y) < \delta $, we have $g(d(x,y)) < \varepsilon$, that is, $  g_d(x,y) < \varepsilon.$ Therefore, $B_{d}(x,\delta) \subset B_{g_d}(x,\varepsilon).$
 
Next, we prove for all $x \in \MM$ and each $\varepsilon > 0$, there is $\delta> 0$ such that
  $B_{g_d}(x,\delta) \subset B_{d}(x,\varepsilon).$ We show that if $a,b \geq 0$ and $g(b) < \frac{g(a)}{2} $, then    $  b < \frac{a}{2}.$ Indeed, if $a\leq 2b$,  since $g$ is increasing and subadditive, then  we have 
$$g(a) \leq g(2b)  \leq 2g(b).$$  From this fact, setting   $\delta = \frac{g(\varepsilon)}{2}$, if $g_{d}(x,y)<\delta$,   we have
$$  g(d(x,y)) < \frac{g(\varepsilon)}{2}  \quad \Rightarrow \quad d(x,y) < \frac{\varepsilon}{2} < \varepsilon.$$ 
Therefore  $B_{g_d}(x,\delta) \subset B_{d}(x,\varepsilon).$ If follows from the   above facts that  $g_d \in \MM(\tau)$.

\medskip

\noindent iii) Fix a continuous map $f:\MM\rightarrow \MM$. Since $g$ is increasing, we have that $$g(d(f^{m}(x),f^{m}(y)))=\max \lbrace g(d(x,y)),g(d(f(x),f(y)))\dots, g(d(f^{n-1}(x),f^{n-1}(y))) \rbrace$$ if and only if $$d(f^{m}(x),f^{m}(y))=\max \lbrace d(x,y),d(f(x),f(y))\dots, d(f^{n-1}(x),f^{n-1}(y)) \rbrace .$$
Hence, given $n \in \NN$, we have for any $x,y\in\MM$ that 
\begin{align*}
(g_d)_{n}(x,y) & = \max \lbrace g_d(x,y),g_d(f(x),f(y))\dots, g_d(f^{n-1}(x),f^{n-1}(y)) \rbrace\\
&= \max \lbrace g(d(x,y)),g(d(f(x),f(y)))\dots, g(d(f^{n-1}(x),f^{n-1}(y))) \rbrace\\
&  =g\left( \max \lbrace d(x,y),d(f(x),f(y))\dots,  d(f^{n-1}(x),f^{n-1}(y)) \rbrace\right)  = g({d}_n(x,y)),
\end{align*}
which proves iii).
\end{proof} 
Next, we will consider the metric mean dimension with metrics on  $\mathcal{A}_{d}(\MM)$. For any continuous map $g\in \mathcal{A}[0, \rho]$, we will take
$$k_m(g) = \liminf_{\varepsilon \to 0^+} \frac{\log(g(\varepsilon))}{\log(\varepsilon)} \quad\text{ and }\quad k_M(g) = \limsup_{\varepsilon \to 0^+} \frac{\log(g(\varepsilon))}{\log(\varepsilon)}.$$ 

\begin{lemma}\label{wdwwfwwcbb}     For any $g\in \mathcal{A}[0, \rho]$, we have that  $k_m(g) \leq k_M(g)\leq 1$. \end{lemma}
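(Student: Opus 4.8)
The plan is to bound $k_M(g)$ using only the subadditivity and monotonicity of $g$, together with the normalization $g^{-1}(0)=\{0\}$. The inequality $k_m(g)\le k_M(g)$ is immediate, since a $\liminf$ never exceeds the corresponding $\limsup$. So the real content is the bound $k_M(g)\le 1$, i.e. $\limsup_{\varepsilon\to 0^+}\frac{\log g(\varepsilon)}{\log\varepsilon}\le 1$. Since $\log\varepsilon<0$ for small $\varepsilon$, this is equivalent to showing that $\log g(\varepsilon)\ge \log\varepsilon$ eventually, i.e.\ $g(\varepsilon)\ge c\,\varepsilon$ for small $\varepsilon$ for a suitable constant — more precisely, that $\liminf_{\varepsilon\to 0^+}\frac{g(\varepsilon)}{\varepsilon}>0$, which forces $\frac{\log g(\varepsilon)}{\log\varepsilon}\le 1+o(1)$.

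First I would extract a lower bound on $g$ linear in its argument from subadditivity. Fix any $\varepsilon_0\in(0,\rho]$; then $g(\varepsilon_0)>0$. For $\varepsilon\in(0,\varepsilon_0]$, write $\varepsilon_0 = n\varepsilon + r$ with $n=\lfloor \varepsilon_0/\varepsilon\rfloor\ge 1$ and $0\le r<\varepsilon$. By subadditivity and monotonicity, $g(\varepsilon_0)\le n\,g(\varepsilon)+g(r)\le (n+1)g(\varepsilon)$, so $g(\varepsilon)\ge \frac{g(\varepsilon_0)}{n+1}\ge \frac{g(\varepsilon_0)}{\varepsilon_0/\varepsilon+1}=\frac{g(\varepsilon_0)\,\varepsilon}{\varepsilon_0+\varepsilon}\ge \frac{g(\varepsilon_0)}{2\varepsilon_0}\,\varepsilon$ once $\varepsilon\le\varepsilon_0$. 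Hence with $c:=\frac{g(\varepsilon_0)}{2\varepsilon_0}>0$ we get $g(\varepsilon)\ge c\varepsilon$ for all $\varepsilon\in(0,\varepsilon_0]$.

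Then I would feed this into the quotient: for $\varepsilon$ small enough that $\log\varepsilon<0$ and $\varepsilon\le\varepsilon_0$, monotonicity of $\log$ gives $\log g(\varepsilon)\ge \log(c\varepsilon)=\log c+\log\varepsilon$, and dividing by the negative quantity $\log\varepsilon$ reverses the inequality:
$$\frac{\log g(\varepsilon)}{\log\varepsilon}\le \frac{\log c+\log\varepsilon}{\log\varepsilon}=1+\frac{\log c}{\log\varepsilon}.$$
Letting $\varepsilon\to 0^+$, the last term tends to $0$, so $\limsup_{\varepsilon\to 0^+}\frac{\log g(\varepsilon)}{\log\varepsilon}\le 1$, which is $k_M(g)\le 1$. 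Combined with $k_m(g)\le k_M(g)$ this finishes the proof.

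The only mild subtlety — not really an obstacle — is bookkeeping the sign of $\log\varepsilon$ and making sure the Euclidean-division estimate uses $n\ge 1$ (so that $n+1\le \varepsilon_0/\varepsilon+1$ is a genuine bound and $g(\varepsilon_0)$ is actually controlled); one should just restrict to $\varepsilon\le\varepsilon_0$ from the start. No deep input is needed beyond subadditivity plus $g(\varepsilon_0)>0$, the latter guaranteed by $g^{-1}(0)=\{0\}$.
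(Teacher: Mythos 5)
Your proof is correct and follows essentially the same route as the paper: both arguments extract a linear lower bound $g(\varepsilon)\ge c\,\varepsilon$ near $0$ from subadditivity and monotonicity (the paper via dyadic halving of $\rho$, you via Euclidean division of $\varepsilon_0$ by $\varepsilon$, arriving at essentially the same constant $g(\rho)/(2\rho)$), and then pass to the logarithmic quotient, where $\log(c\varepsilon)/\log\varepsilon\to 1$ gives $k_M(g)\le 1$. No gaps.
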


 \begin{proof} Without loss of generality, we can assume that $\rho\in (0,1)$. We prove that there exists $m\in (0,\infty)$ such that $mx\leq g(x)$ for any $x\in [0,\rho]. $ Since $g$ is subadditive, we have that $$g(\rho)\leq 2g\left(\frac{\rho}{2}\right)\leq \cdots \leq 2^{n}g\left(\frac{\rho}{2^{n}}\right)\Rightarrow \frac{g(\rho)}{\rho}\leq \frac{g\left(\frac{\rho}{2}\right)}{\frac{\rho}{2}}\leq \cdots \leq \frac{g\left(\frac{\rho}{2^{n}}\right)}{\frac{\rho}{2^{n}}},$$ for any $n\in \mathbb{N}.$ 
If $0< y\leq \rho$, there exists $n\geq 0$ such that $ \frac{\rho}{2^{n+1}}\leq y\leq \frac{\rho}{2^{n}}$, and hence $\frac {2^{n}}{\rho}\leq \frac{1}{y}\leq \frac{2^{n+1}}{\rho} $. Thus, 
$$  \frac{g(\rho)}{\rho}\leq \frac{g\left(\frac{\rho}{2^{n+1}}\right)}{\frac{\rho}{2^{n+1}}} \leq \frac{g(y)}{\frac{\rho}{2^{n+1}}}=2\frac{g(y)}{\frac{\rho}{2^{n}}} \leq 2 \frac{g(y)}{y}.$$ Therefore, taking $m=\frac{g(\rho)}{2\rho}$, we have that $  my\leq g(y)$ for any $y\in [0,\rho]. $   Thus, for any $x\in(0,\rho]$, we have that $$\log mx\leq  \log g(x) \Rightarrow   - \log g(x)\leq - \log mx\Rightarrow   \frac{\log g(x)}{\log x}\leq \frac{\log mx}{\log x}.$$ Given that $\frac{\log mx}{\log x}\rightarrow 1$, as $x\rightarrow 0$, we have that  $k_m(g)\leq k_M(g)\leq 1$. 
\end{proof}

 From now on, we will suppose that $k_{m}(g),k_{M}(g)>0$. For instance, if there exists  $n\in \mathbb{N}$ and $\delta \in(0,1)$ such that with $  g(x)\leq  x^{\frac{1}{n}},$ for any $x\in (0,\delta]$, we have that $$   \log g(x)\leq \frac{1}{n} \log x \Rightarrow -\frac{1}{n}\log x\leq - \log g(x) \Rightarrow \frac{1}{n}\leq \frac{\log g(x)}{\log x}.$$ 
 
 We remark that there exists maps $g\in \mathcal{A}[0, \rho]$ such that $k_{m}(g)=k_{M}(g)=0$.   Indeed, if $g$ is defined as $g(x)=\frac{1}{\sqrt{{\log (\frac{1}{x})}}}$ for $x>0$ and $g(0)=0$, we can prove that $k_{M}(g)=0$ ($g(x)$ is the inverse map of the function $f:[0,\infty)\mapsto \mathbb{R}$ defined as  $f(x)=e^{-\frac{1}{x^{2}}}$ for $x>0$ and $f(0)=0$).

 \medskip

Remember that for any two sequences of non-negative real numbers $(a_{n})_{n\in\mathbb{N}}$ and $(b_{n})_{n\in\mathbb{N}}$, we always have: 
\begin{align}
   \limsup_{n\rightarrow \infty} a_{n}b_{n}\leq  \limsup_{n\rightarrow \infty} a_{n} \limsup_{n\rightarrow \infty} b_{n} \label{ffwfwfw}\\
  \liminf_{n\rightarrow \infty} a_{n}b_{n}\geq  \liminf_{n\rightarrow \infty} a_{n} \liminf_{n\rightarrow \infty} b_{n}\label{ffeewfwfw},  \end{align}
 whenever the right-hand side is not of the form $0\cdot \infty$. The equalities hold if $\underset{n\rightarrow \infty}{\lim} a_{n}$ exists. 
 These facts will be useful for the next proposition.

\begin{proposition}\label{propg}
Take $g\in \mathcal{A} [0, \rho] $, such that $k_{m}(g),k_{M}(g)>0$. Set  $g_d(x,y) = g \circ d(x, y)$ for all $x,y \in \MM$.  For any continuous map $f:\MM\rightarrow \MM,$ we have
\begin{itemize}
\item[i)] $ \underline{\emph{mdim}}_{\emph{M}}(\MM,d,f) \geq k_m(g)\underline{\emph{mdim}}_{\emph{M}}(\MM,g_d,f).$
\item[ii)] $ \overline{\emph{mdim}}_{\emph{M}}(\MM,d,f) \leq k_M(g)\overline{\emph{mdim}}_{\emph{M}}(\MM,g_d,f).$
\end{itemize}
\end{proposition}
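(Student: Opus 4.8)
The plan is to leverage the identity $({g_d})_{n}(x,y) = g(d_{n}(x,y))$ established in Lemma \ref{metricg}(iii) (note that $g_d\in\MM(\tau)$ by Lemma \ref{metricg}(ii), so both quantities in the statement make sense), which lets one transfer spanning sets between $(\MM,d)$ and $(\MM,g_d)$, and then to keep track of how the logarithmic scales are distorted by $g$. I would run the whole argument with the $\mathrm{span}$‑characterization of the metric mean dimension.

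First I would record an elementary comparison of spanning sets. Since $g$ is continuous and increasing, for each $n\in\NN$ and each $\eta\in(0,\rho]$: any $(n,f,\eta)$‑spanning set of $(\MM,d)$ is an $(n,f,2g(\eta))$‑spanning set of $(\MM,g_d)$, because $d_{n}(x,y)<\eta$ forces $(g_d)_{n}(x,y)=g(d_{n}(x,y))\le g(\eta)<2g(\eta)$; conversely, any $(n,f,g(\eta))$‑spanning set of $(\MM,g_d)$ is an $(n,f,\eta)$‑spanning set of $(\MM,d)$, because $(g_d)_n(x,y)<g(\eta)$ forces $d_n(x,y)<\eta$ (otherwise $g(d_n(x,y))\ge g(\eta)$). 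Hence
$$\text{span}_{g_d}(n,f,2g(\eta))\ \le\ \text{span}_{d}(n,f,\eta)\ \le\ \text{span}_{g_d}(n,f,g(\eta)).$$
The factor $2$ is just a device to absorb the difference between strict and non‑strict inequalities when $g$ fails to be strictly increasing; subadditivity of $g$ is not needed at this point, only that $g$ is increasing and continuous.

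Next I would pass to the growth rates $\text{span}(f,\cdot)=\limsup_{n}\frac{1}{n}\log\text{span}(n,f,\cdot)$, divide by $|\log\eta|$, and factor the resulting quotient through the scale change. For statement (i), the left inequality above gives
$$\frac{\text{span}_{d}(f,\eta)}{|\log\eta|}\ \ge\ \frac{\text{span}_{g_d}(f,2g(\eta))}{|\log 2g(\eta)|}\cdot\frac{|\log 2g(\eta)|}{|\log\eta|};$$
taking $\liminf_{\eta\to0}$ and invoking \eqref{ffeewfwfw}, and using that $2g(\eta)\to0$ as $\eta\to0$ (so the $\liminf$ of the first factor is at least $\underline{\text{mdim}}_{\text{M}}(\MM,g_d,f)$) together with $\liminf_{\eta\to0}\frac{|\log 2g(\eta)|}{|\log\eta|}=\liminf_{\eta\to0}\frac{\log g(\eta)}{\log\eta}=k_m(g)$ (for small $\eta$ the additive constant $\log 2$ and the sign bookkeeping are absorbed in the limit), one obtains $\underline{\text{mdim}}_{\text{M}}(\MM,d,f)\ge k_m(g)\,\underline{\text{mdim}}_{\text{M}}(\MM,g_d,f)$. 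For statement (ii), I would instead start from the right inequality $\text{span}_{d}(n,f,\eta)\le\text{span}_{g_d}(n,f,g(\eta))$, divide by $|\log\eta|$, factor through $|\log g(\eta)|/|\log\eta|$, apply $\limsup_{\eta\to0}$ with \eqref{ffwfwfw}, and use $\limsup_{\eta\to0}\frac{|\log g(\eta)|}{|\log\eta|}=k_M(g)$; here the hypothesis $k_M(g)>0$ guarantees that this product inequality is not being applied in the forbidden $0\cdot\infty$ case.

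I expect the main point to be bookkeeping rather than a genuine obstacle: making sure the two inequalities between $\text{span}_{d}$ and $\text{span}_{g_d}$ are used in the correct direction for each of (i) and (ii), choosing the matching scale substitution ($2g(\eta)$ for the $\liminf$ statement, $g(\eta)$ for the $\limsup$ statement), and invoking the one‑sided product inequalities \eqref{ffwfwfw}–\eqref{ffeewfwfw} only in their admissible direction — which is exactly what the assumptions $k_m(g),k_M(g)>0$ protect against. I also note that the identical argument goes through with $\text{cov}(n,f,\cdot)$ in place of $\text{span}(n,f,\cdot)$, using $\text{diam}_{(g_d)_n}(U)=g(\text{diam}_{d_n}(U))$, since the two expressions for the metric mean dimension coincide.
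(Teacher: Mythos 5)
Your proposal is correct and follows essentially the same route as the paper: both arguments rest on the identity $(g_d)_n=g\circ d_n$ from Lemma \ref{metricg}(iii), the transfer of counting quantities between $(\MM,d)$ and $(\MM,g_d)$ at the rescaled radius $g(\varepsilon)$, and the limit bookkeeping via \eqref{ffwfwfw}--\eqref{ffeewfwfw} together with $k_m(g),k_M(g)>0$. The only cosmetic differences are that you use spanning sets for part ii) where the paper uses separated sets, and your factor-$2$ device tidies the strict-versus-nonstrict inequality point that the paper glosses over.
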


\begin{proof} Given that $k_{m}(g),k_{M}(g)\in (0,1]$, we can use the properties given in \eqref{ffwfwfw} and \eqref{ffeewfwfw}.

i) Fix  $\varepsilon > 0$. If $d_n(x, y) < \varepsilon$, then   $(g_{d})_{n}(x, y)=g(d_n(x, y)) \leq g(\varepsilon)$,   because $g$ is increasing.    Thus, any $(n, f, \varepsilon)$-spanning subset with respect to $d$ is an $(n, f, g(\varepsilon))$-spanning subset with respect to $g_d$. Hence,
\begin{equation}\label{mfef}\text{span}_{{d}}(n,f,\varepsilon) \geq\text{span}_{g_d}(n,f,g(\varepsilon)).\end{equation} Furthermore, since $g$ is continuous and $g(0) = 0$, we have $\underset{\varepsilon \to 0}{\lim} g(\varepsilon)  =0$. Therefore,
\begin{align*}
\underline{\text{mdim}}_{\text{M}}(\MM,{d},f) & = \liminf_{\varepsilon \to 0} \limsup_{n \to \infty}   \frac{\log\text{span}_{{d}}(n,f,\varepsilon)}{n|\log(\varepsilon)|}\\
& = \liminf_{\varepsilon \to 0} \limsup_{n \to \infty}  \frac{\log\text{span}_{{d}}(n,f,\varepsilon)}{n|\log(\varepsilon)|} \frac{|\log(g(\varepsilon))|}{|\log(g(\varepsilon))|}\\
(\text{from } \eqref{mfef})\quad & \geq \liminf_{\varepsilon \to 0} \limsup_{n \to \infty}  \frac{\log\text{span}_{g_d}(n,f,g(\varepsilon))}{n|\log(g(\varepsilon))|} \frac{|\log(g(\varepsilon))|}{|\log(\varepsilon)|}\\
(\text{from } \eqref{ffeewfwfw})\quad & \geq k_{m}(g)  \liminf_{\varepsilon \to 0} \limsup_{n \to \infty}  \frac{\log\text{span}_{g_d}(n,f,g(\varepsilon))}{n|\log(g(\varepsilon))|}\\
& = k_{m}(g)  \underline{\text{mdim}}_{\text{M}}(\MM,g_d,f).
\end{align*}

ii)   Fix $n \in \NN$ and $\varepsilon > 0$. Let $A$ be an $(n,f,\varepsilon)$-separated subset with respect to $d$. Hence, for any $x,y \in A$ with $x \neq y$, we have
$
d_{n}(x,y) =  \underset{0 \leq j < n}{\max} \left\{ d(f^{j}(x),f^{j}(y)) \right\} > \varepsilon,
$
and, therefore, there exists $j_0 \in \lbrace 0, \dots, n-1 \rbrace$ such that $ 
d(f^{j_0}(x), f^{j_0}(y)) > \varepsilon.$ Since $g$ is increasing, it follows that
$
g\left( d(f^{j_0}(x), f^{j_0}(y))\right) \geq g(\varepsilon)$. Therefore,
$$(g_{d})_{n}(x,y) = \max_{0 \leq j < n}\left\{ g \left(d(f^{j}(x),f^{j}(y)) \right)\right\} \geq g(\varepsilon).$$
Hence, $A$ is an $(n, f, g(\varepsilon))$-separated subset with respect to $g_d$. Thus,
\begin{equation}\label{mbsgdhd}\text{sep}_{d}(n,f,\varepsilon) \leq \text{sep}_{g_d}(n,f,g(\varepsilon)).\end{equation}
Therefore,
\begin{align*}
\overline{\text{mdim}}_{\text{M}}(\MM,d,f) & = \limsup_{\varepsilon \to 0} \limsup_{n \to \infty}  \frac{\text{sep}_{{d}}(n,f,\varepsilon)}{n|\log(\varepsilon)|} = \limsup_{\varepsilon \to 0} \limsup_{n \to \infty}  \frac{\text{sep}_{{d}}(n,f,\varepsilon)}{n|\log(\varepsilon)|} \frac{|\log(g(\varepsilon))|}{|\log(g(\varepsilon))|}\\
(\text{from } \eqref{mbsgdhd})\quad & \leq \limsup_{\varepsilon \to 0} \limsup_{n \to \infty}  \frac{\text{sep}_{g_d}(n,f,g(\varepsilon))}{n|\log(g(\varepsilon))|} \frac{|\log(g(\varepsilon))|}{|\log(\varepsilon)|}\\
(\text{from } \eqref{ffwfwfw})\quad & \leq k_{M}(g)  \limsup_{\varepsilon \to 0} \limsup_{n \to \infty} \frac{\text{sep}_{g_d}(n,f,g(\varepsilon))}{n|\log(g(\varepsilon))|}  =k_{M} (g) \overline{\text{mdim}}_{\text{M}}(\MM,g_d,f).
\end{align*} Hence, $\overline{\text{mdim}}_{\text{M}}(\MM,d,f) \leq k_{M} (g) \overline{\text{mdim}}_{\text{M}}(\MM,g_d,f).$
\end{proof}

\begin{lemma} \label{new}
    For any $g \in \mathcal{A}[0, \rho]$ such that $k(g) = k_m(g) = k_M(g) >0$, we have that
$$\overline{\emph{mdim}}_{\emph{M}}(\MM,d,f) = k(g)\overline{\emph{mdim}}_{\emph{M}}(\MM,g_d,f)$$
and
$$\underline{\emph{mdim}}_{\emph{M}}(\MM,d,f) = k(g) \underline{\emph{mdim}}_{\emph{M}}(\MM,g_d,f).$$
\end{lemma}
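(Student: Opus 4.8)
The plan is to upgrade the two one-sided bounds of Proposition \ref{propg} to equalities by establishing an \emph{exact} identity between the spanning numbers of $(\MM,d,f)$ and those of $(\MM,g_d,f)$, and then transporting that identity through the two limits defining $\underline{\text{mdim}}_{\text{M}}$ and $\overline{\text{mdim}}_{\text{M}}$.

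First I would record that, since $g\in\mathcal{A}[0,\rho]$ is continuous and increasing with $g^{-1}(0)=\{0\}$, the restriction $g\colon[0,\rho]\to[0,g(\rho)]$ is an increasing homeomorphism; in particular $g(\varepsilon)\to 0^{+}$ as $\varepsilon\to 0^{+}$, and by the intermediate value theorem every sequence $\delta_k\to 0^{+}$ can be written $\delta_k=g(\varepsilon_k)$ with $\varepsilon_k\to 0^{+}$. Then I would prove that, for all $n\in\NN$ and all $\varepsilon\in(0,\rho]$,
$$\text{span}_{d}(n,f,\varepsilon)=\text{span}_{g_d}(n,f,g(\varepsilon)).$$
The inequality $\text{span}_{d}(n,f,\varepsilon)\ge \text{span}_{g_d}(n,f,g(\varepsilon))$ is precisely \eqref{mfef} in the proof of Proposition \ref{propg}. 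For the reverse inequality, let $E$ be an $(n,f,g(\varepsilon))$-spanning set for $g_d$; given $x\in\MM$, choose $y\in E$ with $(g_d)_n(x,y)<g(\varepsilon)$, which by Lemma \ref{metricg} iii) means $g(d_n(x,y))<g(\varepsilon)$, and since $g$ is increasing this forces $d_n(x,y)<\varepsilon$. Hence $E$ is $(n,f,\varepsilon)$-spanning for $d$, so $\text{span}_{d}(n,f,\varepsilon)\le\text{span}_{g_d}(n,f,g(\varepsilon))$. The same pair of arguments works verbatim with ``separated'' in place of ``spanning'', using \eqref{mbsgdhd}.

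Taking $\limsup_{n\to\infty}\frac1n\log(\cdot)$ of the identity gives $\text{span}_{d}(f,\varepsilon)=\text{span}_{g_d}(f,g(\varepsilon))$ for every $\varepsilon\in(0,\rho]$. For $\varepsilon$ small enough that $\varepsilon,g(\varepsilon)\in(0,1)$ we then have
$$\frac{\text{span}_{d}(f,\varepsilon)}{|\log\varepsilon|}=\frac{\text{span}_{g_d}(f,g(\varepsilon))}{|\log g(\varepsilon)|}\cdot\frac{\log g(\varepsilon)}{\log\varepsilon},$$
and under the hypothesis $k(g)=k_m(g)=k_M(g)>0$ the second factor converges to $k(g)\in(0,1]$ as $\varepsilon\to 0^{+}$. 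Applying \eqref{ffwfwfw} and \eqref{ffeewfwfw} (which hold with equality here because the second factor has a genuine limit), together with the change of variables $\delta=g(\varepsilon)$ justified in the first step, I obtain
$$\liminf_{\varepsilon\to 0}\frac{\text{span}_{d}(f,\varepsilon)}{|\log\varepsilon|}=k(g)\liminf_{\delta\to 0}\frac{\text{span}_{g_d}(f,\delta)}{|\log\delta|},\qquad \limsup_{\varepsilon\to 0}\frac{\text{span}_{d}(f,\varepsilon)}{|\log\varepsilon|}=k(g)\limsup_{\delta\to 0}\frac{\text{span}_{g_d}(f,\delta)}{|\log\delta|},$$
which are exactly the two asserted equalities.

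I do not expect a serious obstacle: once the exact span identity is in hand, everything is routine limit manipulation. The two points needing a little care are (i) passing the monotonicity of $g$ cleanly through the (strict) inequalities in the definition of a spanning set — and the ``$\ge$'' half of this is already done for us in \eqref{mfef}; and (ii) justifying the substitution $\delta=g(\varepsilon)$ inside the outer $\liminf$/$\limsup$, which is where continuity of $g$ and $g(0)=0$ enter. An alternative, even shorter route would be to observe that the proof of Proposition \ref{propg} never uses subadditivity of $g$, only that $g$ is continuous and increasing; applying that argument to $g^{-1}$ (for which $(g^{-1})_{g_d}=d$ and $k_m(g^{-1})=k_M(g^{-1})=1/k(g)$) produces the two reverse inequalities directly, while Proposition \ref{propg} applied to $g$ itself gives the remaining two, and the four together are the claimed equalities.
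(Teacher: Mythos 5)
Your argument is correct, and it reaches the two equalities by a mildly different route than the paper. The paper keeps the one-sided estimates of Proposition \ref{propg} (item ii for the upper, item i for the lower, both ultimately resting on the separated-set inequality \eqref{mbsgdhd} on one side) and obtains the missing reverse inequalities by re-running the spanning-set computation from \eqref{mfef}, now using that $\log g(\varepsilon)/\log\varepsilon$ has the genuine limit $k(g)$. You instead upgrade \eqref{mfef} to the exact identity $\text{span}_{d}(n,f,\varepsilon)=\text{span}_{g_d}(n,f,g(\varepsilon))$, the new half being that monotonicity of $g$ lets you pass \emph{backwards} through the strict inequality $g(d_n(x,y))<g(\varepsilon)$ (via Lemma \ref{metricg} iii)), so both equalities follow from spanning numbers alone, with no appeal to separated sets or to Proposition \ref{propg} ii. What this buys is a slightly more self-contained and symmetric proof, and you also make explicit the change of variables $\delta=g(\varepsilon)$ inside the outer limit (justified by continuity, $g^{-1}(0)=\{0\}$ and the intermediate value theorem), a point the paper uses silently when it identifies $\limsup_{\varepsilon\to0}$ of the quantity evaluated at $g(\varepsilon)$ with $\overline{\text{mdim}}_{\text{M}}(\MM,g_d,f)$. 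Your alternative suggestion --- applying the argument of Proposition \ref{propg} to $g^{-1}$, for which $(g^{-1})_{g_d}=d$ and $k_m(g^{-1})=k_M(g^{-1})=1/k(g)$, noting that subadditivity is never used in that proof --- is also sound and is genuinely different in flavor from the paper; it only requires the extra (harmless) observation that $g$ is strictly increasing so that $g^{-1}$ exists, and that the constraint $k\leq 1$ from Lemma \ref{wdwwfwwcbb} is not needed in the proof of Proposition \ref{propg}.
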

 \begin{proof}
     From \eqref{mfef}, we have that \begin{align*}
\overline{\text{mdim}}_{\text{M}}(\MM,{d},f) & = \limsup_{\varepsilon \to 0} \limsup_{n \to \infty}   \frac{\log\text{span}_{{d}}(n,f,\varepsilon)}{n|\log(\varepsilon)|}\\
& \geq \limsup_{\varepsilon \to 0} \limsup_{n \to \infty}  \frac{\log\text{span}_{g_d}(n,f,g(\varepsilon))}{n|\log(g(\varepsilon))|} \frac{|\log(g(\varepsilon))|}{|\log(\varepsilon)|}\\
& = k (g)  \limsup_{\varepsilon \to 0} \limsup_{n \to \infty}  \frac{\log\text{span}_{g_d}(n,f,g(\varepsilon))}{n|\log(g(\varepsilon))|}\\
& = k (g)  \overline{\text{mdim}}_{\text{M}}(\MM,g_d,f).
\end{align*}
It follows from Proposition \ref{propg}, item ii, that $\overline{\text{mdim}}_{\text{M}}(\MM,{d},f)= k (g)  \overline{\text{mdim}}_{\text{M}}(\MM,g_d,f).$

Analogously, using \eqref{mbsgdhd} and Proposition \ref{propg}, item i, we can prove that $\underline{\text{mdim}}_{\text{M}}(\MM,d,f) = k(g) \underline{\text{mdim}}_{\text{M}}(\MM,g_d,f).$
 \end{proof}

From now on, we will assume that $\rho=\text{diam}_{d}(\MM)<1.$ Next, set $${\mathcal{A}}^{+}[0,\rho]:=\{g\in {\mathcal{A}}[0,\rho]: k_m(g)=k_M(g)>0\}.$$  We will choose a suitable topology for ${\mathcal{A}}^{+}[0,\rho]$. Fix $g \in {\mathcal{A}}^{+}[0,\rho]$. Since any $h\in {\mathcal{A}}^{+}[0,\rho]$ satisfies  $h(0)=0$, then we  must have   $d(g(x),h(x))\rightarrow 0,$ as $x\rightarrow 0$. For a fixed $\varepsilon >0$, set  \begin{equation}\label{efwwwqf} \tilde{B}(g,\varepsilon)=\left\{h\in {\mathcal{A}}^{+}[0,\rho]:   g(x)(x^{\varepsilon}-1)< h(x)-g(x) < g(x)\frac{(1-x^{\varepsilon})}{x^{\varepsilon}}, \text{ for  }x\in (0,\rho]\right\}.\end{equation}  Given that we are assuming that  $\rho<1$, notice that $g\in \tilde{B}(g,\varepsilon)$,   because $$g(x)(x^{\varepsilon}-1) <0< g(x)\frac{(1-x^{\varepsilon})}{x^{\varepsilon}} \quad\text{for  any  }x\in (0,\rho].$$   Furthermore,  if $h\in \tilde{B}(g,\varepsilon)$, then for any $x\in (0,\rho]$, we have that \begin{align*}g(x)(x^{\varepsilon}-1)&< h(x)-g(x)< g(x)\frac{(1-x^{\varepsilon})}{x^{\varepsilon}}\iff x^{\varepsilon}g(x)< h(x)< \frac{g(x)}{x^{\varepsilon}} \end{align*}
(see  Figure \ref{fig:ejemplo1d}). Let $\mathcal{T}$ be the topology induced by the sets $\tilde{B}(g,\varepsilon)$, that is, these sets form a subbase for $\mathcal{T}$. 
\begin{figure}[ht]
 \centering
   \includegraphics[width=0.26\textwidth]{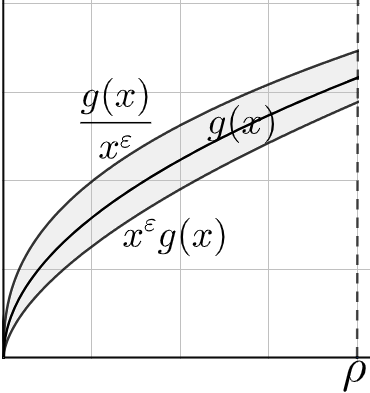}
\caption{$\tilde{B}(g,\varepsilon)$} \label{fig:ejemplo1d}
\end{figure}

\begin{lemma}\label{contikg} The map   
\begin{align*}
\mathcal{Z} : ( {\mathcal{A}}^{+}[0,\rho],\mathcal{T})    &\to   (0,1]  \\
g &\mapsto  k(g):=k_{m}(g)
\end{align*}
is continuous.  
\end{lemma}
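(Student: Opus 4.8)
The goal is to show that $g\mapsto k(g)$ is continuous on $({\mathcal A}^+[0,\rho],{\mathcal T})$. Since the sets $\tilde B(g,\varepsilon)$ form a subbase for $\mathcal T$, it suffices to control $k(h)$ when $h$ lies in a single basic (or subbasic) neighborhood $\tilde B(g,\varepsilon)$ of a fixed $g$. The key observation, already recorded just below \eqref{efwwwqf}, is that $h\in\tilde B(g,\varepsilon)$ is equivalent to the two-sided pointwise bound
$$x^{\varepsilon}g(x) < h(x) < \frac{g(x)}{x^{\varepsilon}}\qquad\text{for all }x\in(0,\rho].$$
I would take the logarithm of this chain of inequalities and divide by $\log x$ (which is negative for $x\in(0,\rho]\subseteq(0,1)$, so the inequalities reverse):
$$\frac{\log g(x)}{\log x} - \varepsilon \;<\; \frac{\log h(x)}{\log x} \;<\; \frac{\log g(x)}{\log x} + \varepsilon .$$

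**Passing to the limit.** Now I would take $\liminf_{x\to 0^+}$ and $\limsup_{x\to 0^+}$ across this double inequality. Using the elementary fact that adding a constant commutes with $\liminf$ and $\limsup$, the left-hand bound gives $k_m(g)-\varepsilon \le k_m(h)$ and $k_M(g)-\varepsilon\le k_M(h)$, while the right-hand bound gives $k_m(h)\le k_m(g)+\varepsilon$ and $k_M(h)\le k_M(g)+\varepsilon$. Since $h\in{\mathcal A}^+[0,\rho]$ means $k(h):=k_m(h)=k_M(h)$ and likewise $k(g)=k_m(g)=k_M(g)$, these combine to
$$|k(h)-k(g)|\le\varepsilon\qquad\text{for every }h\in\tilde B(g,\varepsilon).$$
This is exactly the statement that $\mathcal Z$ is continuous at $g$: given a target accuracy $\varepsilon'>0$, the subbasic neighborhood $\tilde B(g,\varepsilon')$ is mapped by $\mathcal Z$ into the interval $(k(g)-\varepsilon',k(g)+\varepsilon')$. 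Since $g$ was arbitrary, $\mathcal Z$ is continuous on all of $({\mathcal A}^+[0,\rho],{\mathcal T})$.

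**Where the care is needed.** There is essentially no hard analytic step here — the content is entirely the translation of the neighborhood basis $\tilde B(g,\varepsilon)$ into the logarithmic bound and then the observation that $\liminf$/$\limsup$ are monotone and translation-equivariant. The one point that must be handled cleanly is the sign: on $(0,\rho]$ with $\rho<1$ we have $\log x<0$, so dividing the inequalities $x^\varepsilon g(x)<h(x)<g(x)/x^\varepsilon$ (equivalently $\varepsilon\log x+\log g(x)<\log h(x)<-\varepsilon\log x+\log g(x)$) by $\log x$ flips the direction, and I must keep track of this to land on $\frac{\log g(x)}{\log x}-\varepsilon<\frac{\log h(x)}{\log x}<\frac{\log g(x)}{\log x}+\varepsilon$ rather than the reverse. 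A secondary, purely bookkeeping remark: since the $\tilde B(g,\varepsilon)$ only form a subbase, strictly speaking an arbitrary neighborhood of $g$ is a finite intersection $\bigcap_{i} \tilde B(g_i,\varepsilon_i)$ containing $g$; but for proving continuity at $g$ it is enough to exhibit, for each $\varepsilon'>0$, the single subbasic neighborhood $\tilde B(g,\varepsilon')$ on which $|k(\cdot)-k(g)|\le\varepsilon'$, so no further intersection argument is needed. I would state the proof in that form for brevity.
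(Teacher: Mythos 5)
Your proof is correct and follows essentially the same route as the paper: both translate membership in $\tilde{B}(g,\varepsilon)$ into the bound $\bigl|\tfrac{\log h(x)}{\log x}-\tfrac{\log g(x)}{\log x}\bigr|<\varepsilon$ on $(0,\rho]$ and then pass to the limit $x\to 0^{+}$ to get $|k(h)-k(g)|\leq\varepsilon$ (the paper phrases this via continuity at $0$ of the auxiliary maps $\tilde{g},\tilde{h}$, you via monotonicity of $\liminf/\limsup$, which is the same content). The only cosmetic point is that you obtain the closed bound $|k(h)-k(g)|\leq\varepsilon'$ while claiming the open interval; using $\tilde{B}(g,\varepsilon'/2)$ fixes this trivially.
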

\begin{proof}
     For any $g\in  {\mathcal{A}}^{+}[0,\rho]$, define 
    $$\tilde{g}(x)=\begin{cases}
       \frac{\log g(x)}{\log x},\text{ if } x\in (0,\rho]\\
       k(g),\text{ if } x=0.
    \end{cases}  $$
     Note that $\tilde{g}:[0,\rho]\rightarrow \mathbb{R}$ is a continuous map. Specifically,  $\tilde{g}$ is continuous at 0, because $$\tilde{g}(0)=k(g)=\lim_{x\rightarrow 0}\tilde{g}(x).$$ Next, fix  
     $h\in \tilde{B}(g,\varepsilon)$. Given that $\rho<1$, then for any $x\in (0,\rho]$ we have that \begin{align*}  x^{\varepsilon}g(x)< h(x)< \frac{g(x)}{x^{\varepsilon}} & \iff x^{\varepsilon}< \frac{h(x)}{g(x)}< \frac{1}{x^{\varepsilon}}\\
     &\iff \varepsilon\log x<\log h(x)-\log g(x)< -\varepsilon \log x.\end{align*}
Therefore,     $-\varepsilon< \tilde{g}(x)-\tilde{h}(x)<\varepsilon$ for any $x\in (0,\rho]$. Thus,  $|k(g)-k(h)|=|\tilde{g}(0)-\tilde{h}(0)|\leq\varepsilon$, by the continuity of both $\tilde{g}$ and $\tilde{h}$. This fact proves that $g\mapsto k(g)$ is a continuous map. 
\end{proof}

For the next results, we will consider the set $${\mathcal{A}}^{+}_d(\MM) =\{g\circ d \in \mathcal{A}_d(\MM): g\in {\mathcal{A}}^{+}[0,\rho]   \}.$$   Notice that ${\mathcal{A}}^{+}_d(\MM)\neq \emptyset$, because  the function  $g(x)=x^a$, for a fixed $a\in (0,1]$, belongs to $\mathcal{A}^{+}[0,\rho]$    (see Example \ref{metrica}). In particular, $d\in {\mathcal{A}}^{+}_d(\MM)$. 

\begin{lemma}
    Let $\MM$ be a compact space such that the  metric map $d:\MM\times \MM\rightarrow [0,\rho]$ is surjective. Then \begin{align*}
\mathcal{Z} : {\mathcal{A}}^{+}[0,\rho]     &\to  {\mathcal{A}}^{+}_{d}(\MM)   \\
g &\mapsto g\circ d
\end{align*} is a bijective map. 
\end{lemma}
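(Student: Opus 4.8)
The plan is to verify surjectivity and injectivity of $\mathcal{Z}$ separately; both arguments are essentially formal, and only injectivity will use the hypothesis that the metric map $d\colon\MM\times\MM\to[0,\rho]$ is surjective.

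For surjectivity I would simply unwind the definition of the target set. First one notes that $\mathcal{Z}$ is well defined: for $g\in\mathcal{A}^{+}[0,\rho]$ the function $g\circ d$ belongs to $\mathcal{A}^{+}_{d}(\MM)$ by the very definition of $\mathcal{A}^{+}_{d}(\MM)$. Conversely, any $h\in\mathcal{A}^{+}_{d}(\MM)$ is, again by definition, of the form $h=g\circ d$ for some $g\in\mathcal{A}^{+}[0,\rho]$, and then $h=\mathcal{Z}(g)$. Hence $\mathcal{Z}$ is onto, and this step needs no assumption on $d$.

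For injectivity I would take $g_{1},g_{2}\in\mathcal{A}^{+}[0,\rho]$ with $\mathcal{Z}(g_{1})=\mathcal{Z}(g_{2})$, that is $g_{1}(d(x,y))=g_{2}(d(x,y))$ for all $x,y\in\MM$, fix an arbitrary $t\in[0,\rho]$, and use the surjectivity of $d$ to produce a pair $x,y\in\MM$ with $d(x,y)=t$. Then $g_{1}(t)=g_{1}(d(x,y))=g_{2}(d(x,y))=g_{2}(t)$, and since $t$ was arbitrary we conclude $g_{1}=g_{2}$. Combining the two steps gives that $\mathcal{Z}$ is a bijection.

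The only real point to emphasize — what plays the role of the obstacle here — is that the surjectivity hypothesis on $d$ is genuinely needed and cannot be dropped: without it the identity $g_{1}\circ d=g_{2}\circ d$ only forces $g_{1}$ and $g_{2}$ to agree on the image $d(\MM\times\MM)$, which may be a proper subset of $[0,\rho]$ (for a totally disconnected space such as the Cantor set, for instance), and monotonicity together with subadditivity do not determine an element of $\mathcal{A}^{+}[0,\rho]$ from its values on such a set, so injectivity could fail. It is worth remarking that the hypothesis holds automatically when $\MM$ is connected, since then $d(\MM\times\MM)$ is a connected subset of $\mathbb{R}$ containing $0=d(x,x)$ and the diameter $\rho$ (attained by compactness of $\MM$), hence equals $[0,\rho]$. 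Beyond recognizing this, the proof is short and presents no genuine difficulty.
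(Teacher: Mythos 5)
Your proof is correct and follows essentially the same route as the paper: surjectivity of $\mathcal{Z}$ is immediate from the definition of $\mathcal{A}^{+}_{d}(\MM)$, and injectivity uses the surjectivity of $d$ to pass from $g_{1}\circ d=g_{2}\circ d$ to $g_{1}(t)=g_{2}(t)$ for every $t\in[0,\rho]$. The extra remarks (why surjectivity of $d$ is needed and when it holds automatically) are sound but not part of the paper's argument.
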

\begin{proof} Clearly $\mathcal{Z} $ is surjective.     Next, we prove that for any $\tilde{d}\in {\mathcal{A}}^{+}_{d}(\MM)$, there exists a unique $g_{\tilde{d}}\in {\mathcal{A}}^{+}[0,\rho]$ such that $\tilde{d}=g\circ d$. Suppose that $g_{1},g_{2}\in {\mathcal{A}}^{+}[0,\rho]$ and $\tilde{d}=g_{1}\circ d =g_{2}\circ {d}$. Since $d$ is surjective, for any $t\in [0,\rho]$, there exist $x,y\in\MM$ such that $t=d(x,y)$. Therefore, $g_{1}(t)=g_{2}(t)$, as we want to prove. 
\end{proof}

 
 
 Suppose that $d:\MM\times \MM\rightarrow [0,\rho]$ is surjective. We will equip  ${\mathcal{A}}^{+}_d(\MM)$ with  the topology $\mathcal{W}$ which becomes the map  
\begin{align*}
\mathcal{Z} : ( {\mathcal{A}}^{+}[0,\rho],\mathcal{T})    &\to  ({\mathcal{A}}^{+}_{d}(\MM),\mathcal{W})  \\
g &\mapsto d
\end{align*}
  a homeomorphism. 

\begin{theorem}\label{teoee} Let $\MM$ be a compact space such that the  metric map $d:\MM\times \MM\rightarrow [0,\rho]$ is surjective. 
Suppose that $\emph{mdim}_{\emph{M}}(\MM,f,d)<\infty$.  The maps
\begin{align*}
\overline{\emph{mdim}}_{\emph{M}}(\MM,f) \colon ({\mathcal{A}}^{+}_{d}(\MM),\mathcal{W}) &\to \RR  \\
g_{d} &\mapsto \overline{\emph{mdim}}_{\emph{M}}(\MM,g_{d},f)
\end{align*} and \begin{align*}
\underline{\emph{mdim}}_{\emph{M}}(\MM,f) \colon ({\mathcal{A}}^{+}_{d}(\MM),\mathcal{W}) &\to \RR  \\
g_{d} &\mapsto \underline{\emph{mdim}}_{\emph{M}}(\MM,g_{d},f)
\end{align*}
are continuous.    
\end{theorem}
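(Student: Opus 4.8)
The key observation is that by Lemma \ref{new}, for any $g \in \mathcal{A}^{+}[0,\rho]$ we have the exact identities
$$\overline{\text{mdim}}_{\text{M}}(\MM,g_d,f) = \frac{\overline{\text{mdim}}_{\text{M}}(\MM,d,f)}{k(g)} \quad\text{and}\quad \underline{\text{mdim}}_{\text{M}}(\MM,g_d,f) = \frac{\underline{\text{mdim}}_{\text{M}}(\MM,d,f)}{k(g)},$$
since $k(g) = k_m(g) = k_M(g) > 0$ whenever $g \in \mathcal{A}^{+}[0,\rho]$. Here $\overline{\text{mdim}}_{\text{M}}(\MM,d,f)$ and $\underline{\text{mdim}}_{\text{M}}(\MM,d,f)$ are \emph{fixed} non-negative real numbers (finite, by the hypothesis $\text{mdim}_{\text{M}}(\MM,f,d)<\infty$). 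So the maps in question factor as
$$g_d \;\longmapsto\; g \;\overset{\mathcal{Z}^{-1}}{\longmapsto}\; k(g) \;\longmapsto\; \frac{\overline{\text{mdim}}_{\text{M}}(\MM,d,f)}{k(g)},$$
and similarly for the lower version.

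First I would make this composition explicit. By construction, $\mathcal{Z} : (\mathcal{A}^{+}[0,\rho],\mathcal{T}) \to (\mathcal{A}^{+}_d(\MM),\mathcal{W})$, $g \mapsto g_d$, is a homeomorphism, so its inverse $g_d \mapsto g$ is continuous. By Lemma \ref{contikg}, the map $g \mapsto k(g) \in (0,1]$ is continuous on $(\mathcal{A}^{+}[0,\rho],\mathcal{T})$. Finally, the map $t \mapsto c/t$ is continuous on $(0,1]$ for any fixed constant $c \geq 0$ (in particular for $c = \overline{\text{mdim}}_{\text{M}}(\MM,d,f)$ and $c = \underline{\text{mdim}}_{\text{M}}(\MM,d,f)$). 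Composing these three continuous maps gives that
$$g_d \;\longmapsto\; \overline{\text{mdim}}_{\text{M}}(\MM,g_d,f) = \frac{\overline{\text{mdim}}_{\text{M}}(\MM,d,f)}{k(g)}$$
is continuous on $(\mathcal{A}^{+}_d(\MM),\mathcal{W})$, and the same argument applies verbatim to the lower mean dimension.

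There is essentially only one point that needs care, and it is where I expect the only real friction: the identity from Lemma \ref{new} is stated with the base metric $d$ on the left, whereas here we need it relating \emph{two} metrics $g_d$ and $h_d$ both lying in $\mathcal{A}^{+}_d(\MM)$. The clean way around this is precisely the factorization above — one never actually compares $g_d$ with $h_d$ directly; instead one writes every value $\overline{\text{mdim}}_{\text{M}}(\MM,g_d,f)$ in terms of the single fixed number $\overline{\text{mdim}}_{\text{M}}(\MM,d,f)$ divided by $k(g)$, and all the metric-varying behaviour is thereby concentrated in the scalar function $k(\cdot)$, whose continuity is already established. One should also note that the hypothesis $\text{mdim}_{\text{M}}(\MM,f,d) < \infty$ guarantees the constant $c$ in $t \mapsto c/t$ is genuinely a (finite) real number, so that the target of the maps is indeed $\RR$ as claimed; if $\overline{\text{mdim}}_{\text{M}}(\MM,d,f) = 0$ the maps are identically zero and continuity is trivial, and otherwise $c/t$ is a bona fide continuous real-valued function on $(0,1]$. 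Assembling these observations yields the theorem.
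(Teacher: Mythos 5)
Your proposal is correct and follows essentially the same route as the paper: the paper likewise invokes Lemma \ref{new} to write $\overline{\text{mdim}}_{\text{M}}(\MM,f)(\tilde{d})=\overline{\text{mdim}}_{\text{M}}(\MM,d,f)/k(g_{\tilde{d}})$ via the unique $g_{\tilde{d}}$ with $\tilde{d}=g_{\tilde{d}}\circ d$, and then concludes from the continuity of $g\mapsto k(g)$ (Lemma \ref{contikg}) together with $k(g)>0$, treating the zero case separately exactly as you do. Your explicit factorization through the homeomorphism $\mathcal{Z}$ and the map $t\mapsto c/t$ is just a slightly more detailed rendering of the same argument.
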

\begin{proof} We prove the case  $\overline{\text{mdim}}_{\text{M}}(\MM,f) \colon {\mathcal{A}}^{+}_{d}(\MM) \to \RR$, since the proof of the theorem is analogous for  the case $\underline{\text{mdim}}_{\text{M}}(\MM,f) \colon {\mathcal{A}}^{+}_{d}(\MM) \to \RR$.    If $\overline{\text{mdim}}_{\text{M}}(\MM,f,d)=0$, it follows from Lemma \ref{new} that $\overline{\text{mdim}}_{\text{M}}(\MM,f) \colon {\mathcal{A}}^{+}_{d}(\MM) \to \RR$ is the zero map.  

We will suppose that $0<\overline{\text{mdim}}_{\text{M}}(\MM,f,d)<\infty$.  Take $ \tilde{d}$   in  ${\mathcal{A}}^{+}_{d}(\MM)$ and let $g_{\tilde{d}}$  be the  unique map in ${\mathcal{A}}^{+}[0,\rho]$ such that $ \tilde{d}=g_{\tilde{d}}\circ d $. From Lemma \ref{new}, we have that $$  \overline{\text{mdim}}_{\text{M}}(\MM,f)(\tilde{d})=\overline{\text{mdim}}_{\text{M}}(\MM,f)(g_{\tilde{d}}\circ d)=\frac{\overline{\text{mdim}}_{\text{M}}(\MM,d,f)}{k(g_{\tilde{d}})} .$$
Hence, the continuity of $\overline{\text{mdim}}_\text{M}(\MM,f) \colon {\mathcal{A}}^{+}_{d}(\MM) \to \RR$ follows from Lemma \ref{contikg} and given that $k(g)>0$ for any $g\in{\mathcal{A}}^{+}[0,\rho]$.  
\end{proof}


\section{Additional examples}\label{sectionfinal}
 In this section we will present some examples of maps $g\in{\mathcal{A}}^{+}[0,\rho]$ and  the respective  expressions for $\text{mdim}_{\text{M}}(\MM ,g_d,f)$. 

\begin{example}\label{metrica}   Fix any $a \in (0, 1]$. Consider the function $g(x) = x^a$ defined for all $x \in [0,\infty)$. Notice  that $g(x+y) \leq g(x) + g(y)$ for any $x, y \geq 0$. Next, by defining $g_d(x, y) = d(x, y)^a$, we find that $k(g) = a$, and therefore
\begin{equation}\label{vdvevxzz}\text{mdim}_{\text{M}}(\MM ,g_d,f) = \frac{\text{mdim}_{\text{M}}(\MM,d,f)}{a}.\end{equation}
For instance, we have that 
\begin{equation}\label{fwfwfswfw}\text{mdim}_{\text{M}}(([0,1]^{n})^{\mathbb{Z}},h_{\textbf{d}},\sigma) = \frac{n}{a},\end{equation} where is the metric defined in Theorem \ref{theo1}  and $\sigma:([0,1]^{n})^{\mathbb{Z}}\rightarrow ([0,1]^{n})^{\mathbb{Z}}$ is the left shift.
\end{example} 

\begin{example}\label{metricados} Fix any $a \in (0, 1]$. Consider the function $g(x) = x^a$ defined for all $x \in [0,\infty)$. We will prove that 
\begin{equation}\label{gegwvxfmhm}\text{mdim}_{\text{H}}(\MM ,g_{d},f)=\frac{1}{a} \text{mdim}_{\text{H}}(\MM,d,f) .\end{equation}
In fact, consider a fixed $a\in(0,1]$. In fact, consider any $a\in(0,1]$ fixed. Given any $\eta>0$, we have that  $d(x,y)\leq \eta$ if and only if  $d(x,y)^a\leq \eta^a$. Hence, it follows that
\begin{eqnarray*}
\text{H}_{\eta^a}^{s} (\MM,(g_{d})_n)&=& \inf\left\{ \Sigma_{k=1}^{\infty}(\text{diam}_{d_{n}^a} (E_{k}))^{s}: \MM=\cup_{k=1}^{\infty} E_{k} \text{ with } \text{diam}_{d_{n}^a} (E_{k})<\eta^a\text{ for all }k\geq 1\right\}\\
&=  & \inf\left\{ \Sigma_{k=1}^{\infty}(\text{diam}_{d_{n}^a} (E_{k}))^{s}: \MM=\cup_{k=1}^{\infty} E_{k} \text{ with } \text{diam}_{d_{n}} (E_{k})<\eta\text{ for all }k\geq 1\right\}\\
&=&\inf\left\{ \Sigma_{k=1}^{\infty}(\text{diam}_{d_{n}} (E_{k}))^{as}: \MM=\cup_{k=1}^{\infty} E_{k} \text{ with } \text{diam}_{d_{n}} (E_{k})<\eta\text{ for all }k\geq 1\right\}\\
&=&\text{H}_{\eta}^{as} (\MM,d_n).
\end{eqnarray*}
Hence,
\begin{eqnarray*}
\dim_{\text{H}}(\MM,(g_{d})_n,\eta^a)&=&\sup\{s\geq 0:\text{H}_{\eta^a}^{s} (\MM,(g_{d})_n)\geq 1\}= \sup\{s\geq 0:\text{H}_{\eta}^{a s} (\MM,d_{n})\geq 1\}\\
&=&  \frac{1}{a}\sup\{a s\geq 0:\text{H}_{\eta}^{a s} (\MM,d_{n})\geq 1\} = \frac{1}{a} \dim_{\text{H}}(\MM,d_n,\eta),
\end{eqnarray*}
This fact proves \eqref{gegwvxfmhm}.  
\end{example}

 Let $f:\MM\rightarrow \MM$ be a continuous map such that  $\text{mdim}_{\text{M}}(\MM,d,f) > 0$.  It   follows from  Example \ref{metrica}   that  the image of the map
$ \text{mdim}_{\text{M}}(\MM,f) \colon {\mathcal{A}}^{+}_{d}(\MM) \to \RR \cup \lbrace \infty\}$ 
contains the interval $[\text{mdim}_{\text{M}}(\MM,d,f), \infty)$. Hence,
$$\sup_{d' \in \MM(\tau)} \text{mdim}_{\text{M}}(\MM,d' , f) = \infty.$$ Similar fact holds for the mean Hausdorff dimension.

\begin{example}\label{efjfk}
Consider $g(x) = \log(1 + x)$. Since   $1 + x + y \leq  1 + x + y + xy,$ we have 
\begin{align*}g(x + y) &= \log(1 + x + y) \leq \log ((1 + x)(1 + y))   = \log(1 + x) + \log (1 + y) = g(x) + g(y). \end{align*}
 Hence, $g $ is subadditive.  
Note that if $g_{1}$ and $  g_{2} \in {\mathcal{A}}^{+}[0, \infty)$, then $g_{1} \circ g_{2} \in {\mathcal{A}}^{+}[0,\infty)$. Consider $g_1(x) = x^a$, for $a \in (0, 1)$, and $g_2(x) = \log (1 + x)$. The composition $h(x)=g_2 \circ g_1(x) = \log (1 + x^a)$ belongs to ${\mathcal{A}}^{+}[0, \infty)$. We can prove that $k (h)= a$. Hence $$\text{mdim}_{\text{M}}(\MM,h_d,f) = \frac{\text{mdim}_{\text{M}}(\MM,d,f)}{a}.$$
\end{example}

\begin{example} Suppose that  $h:\MM \rightarrow \MM$ is $ \alpha$-H\"older for some $\alpha\in(0,1)$, that is,  there exists $K>0$ such that
\[d(h(x),h(y))\leq K d(x,y)^{\alpha}\quad\text{for all }x,y\in \MM.\]
 Setting  $ d_h(x,y) = d(h(x),h(y)) $   for all $x,y \in \MM$, we have respectively  from Examples \ref{metrica}  and \ref{metricados} that 
$$\text{mdim}_{\text{M}}(\MM, d_h,f) \leq \text{mdim}_{\text{M}}(\MM, d^{\alpha},f)=\frac{ \text{mdim}_{\text{M}}(\MM, d,f)}{\alpha}$$ and $$ \text{mdim}_{\text{H}}(\MM, d_h,f) \leq  \text{mdim}_{\text{H}}(\MM, d^{\alpha},f)=\frac{ \text{mdim}_{\text{H}}(\MM, d,f)}{\alpha}.$$  
\end{example}

If $\MM$ is a compact Riemannian manifold with $\text{dim}(\MM)\geq 2$, then the set $\mathcal{G}$ consisting of homeomorphisms  with positive metric mean dimension is residual in $\text{Hom}(\MM)$ (see \cite{Carvalho}). Therefore, for any $f\in \mathcal{G}$, we have  $$0=\text{mdim}(\MM , f)<\sup_{d' \in \MM(\tau)} \text{mdim}_{\text{M}}(\MM,d',f) = \sup_{d' \in \MM(\tau)} \text{dim}_{\text{B}}(\MM,d')= \infty,$$ where the first equality is because   $\MM$ is finite dimensional (see  \cite{lind}, page 6). Similar result holds for the case of mean Hausdorff dimension, following the facts proved in  \cite{Muentes3}.

\medskip 
 
 \begin{center}
{\bf Acknowledgements}
\end{center}

We would like to thank the anonymous Reviewers who contributed to the revision of this work. In special, we thank the Reviewer for the snowflake metric argument and for improving the readability of this text.

 \end{document}